\documentclass{article}
\usepackage{a4wide}
\usepackage{CJKutf8}
\usepackage{amsmath,amsfonts, bm}
\usepackage{amsthm}
\usepackage{amscd}
\usepackage{amssymb}
\usepackage{mathrsfs}
\usepackage{mathtools}
\usepackage{mathdots}
\usepackage{tikz}
\usetikzlibrary{arrows.meta, automata, chains,bbox, positioning, quotes}
\usepackage[all]{xy}
\usepackage{graphicx} 
\usepackage{xcolor}
\usepackage[colorlinks=true, pdfstartview=FitV, linkcolor=purple, citecolor=purple,pagebackref]{hyperref}
\usepackage{color}
\usepackage{cleveref}
\usepackage{bbm}
\usepackage{float}
\usepackage{import}
\usepackage{enumitem}
\usepackage{listings}
\usepackage{lineno}
\usepackage{bbold}
\usepackage{tabularx}
\usepackage{multirow}
\usepackage[toc,page]{appendix}

\newif\ifdraft
\draftfalse  
\ifdraft
  \newcommand{\SB}[1]{\textcolor{blue}{#1}}
  \newcommand{\SR}[1]{\textcolor{red}{#1}}
  
\else
  \newcommand{\SB}[1]{#1}
  \newcommand{\SR}[1]{#1}
  
\fi

\renewcommand{\textcolor}[2]{#2}

\DeclareSymbolFont{lettersA}{U}{txmia}{m}{it}
\DeclareMathSymbol{\Indi}{\mathord}{lettersA}{'211}
\usepackage{comment}

\newtheorem{Def}{\bf Definition}
\newtheorem{Th}{\bf Theorem}
\newtheorem{Lem}{\bf Lemma}
\newtheorem{Prop}{\bf Proposition}
\newtheorem{Rem}{\bf Remark}
\newtheorem{Ex}{\bf Example}

\newtheorem{Cor}{\bf Corollary}

\newcommand{\R}{{\mathbb R}}

\newcommand{\Z}{{\mathbb Z}}

\newcommand{\N}{{\mathbb N}}
\newcommand{\A}{{\mathcal A}}
\newcommand{\Ab}{{A}}

\newcommand{\eps}{\varepsilon}

\usepackage{todonotes}
\title{The natural extension of the $(-\beta)$-transformation}
\author{Shigeki Akiyama, Hiromi Ei, and Hiroaki Ito}
\date{}

\begin{document}
\maketitle
\begin{abstract}
We give a concrete construction of a natural extension of the $(-\beta)$-transformation when $\beta$ is greater than the golden mean. 
Our construction relies on 
its Markov diagram and the eigenvectors of the associated countable Markov shifts. Its positive recurrence 
can be shown by path counting using a special property of the diagram.
Our down-to-earth construction elucidates the result of Bruin-Kalle \cite{BruinKalle} by examples.
\end{abstract}

\section{Comparison between $\beta$- and $(-\beta)$-transformations}\label{Basic properties of (-beta)-transformation}
Let $\beta$ be a real number greater than $1$ and $\A=\{0,1,\dots, 
\lfloor \beta \rfloor\}$.
The $\beta$-transformation is an interval map $T_\beta$ on $[0,1)$ 
defined by
$$
T_\beta:x \longmapsto \beta x -\left\lfloor \beta x\right\rfloor.
$$
By iterating this map $T_\beta$, we obtain an expansion
$$
x=\sum_{i=1}^{\infty}\frac{d_i}{\beta^i}=(d_1d_2\dots d_i\cdots)_\beta,
$$
where $d_i=d_i(x):=\lfloor \beta T_{\beta}^{i-1}(x)\rfloor\in \A$.
\textcolor{red}{
The $\beta$-expansion of $x$ is denoted by $d(x,\beta)=d_1(x) d_2(x) \cdots$.
Define 
\[
d^*(1,\beta) :=\lim_{x\nearrow 1} d(x,\beta)= b_1 b_2 \cdots
\]}
in the topology of sequence space.
It is well known that 
$
d^*(1,\beta) 
$
plays a crucial role in describing the dynamics of $\beta$-expansion. \textcolor{red}{Parry \cite{Parry:60} and Ito-Takahashi \cite{Ito-Takahashi:74} showed that an infinite word $x_1x_2\dots\in \A^{\N}$ is realized as $d(x,\beta)$ for some $x\in [0,1)$
if and only if 
$$x_nx_{n+1}\dots \ll d^*(1,\beta)$$ for all $n\in \N$.
Here $\ll$ is the lexicographic order.}
Parry \cite{Parry:60} also 
gave a combinatorial characterization of $d^*(1,\beta)$. In the notation of \cite{Ito-Takahashi:74}, Corollary 1 in \cite{Parry:60} can be stated as follows: a sequence $(b_i)_{i\ge1}\in \A^{\N}$ arises as $d^*(1,\beta)$ for some $\beta>1$ if and only if
\begin{equation}
\label{SelfBeta}
b_nb_{n+1}b_{n+2}\dots \ll b_1b_{2}b_{3}\dots 
\end{equation}
for all $n\ge 1$. 
A natural extension of $\beta$-expansion is constructed in \cite[p.104-]{Dajani-Kraaikamp02} and \cite{Dajani-Kraaikamp-Solomyak96}.
Let
\begin{align*}
\mathcal{R}_{\beta,i}=\left[0, T_\beta^i(1)\right]\times \left[0, \frac{1}{\beta^i}\right]
\end{align*}
for all $i\ge 0$ and the underlying space $\mathcal H_\beta$ is obtained by 
stacking $\mathcal R_{\beta,i+1}$ on the top of $\mathcal R_{\beta,i}$ for each $i \ge 0$. 

If $(x,y)\in\mathcal{R}_{\beta,i}$, let $x=(d_1d_2\cdots)_{\beta}$, $y=(\underbrace{0\cdots0}_{i}c_{i+1}c_{i+2}\cdots)_{\beta}$. Then, 
\begin{align}
\label{NaturalBeta}
\mathcal{T}_{\beta} (x,y):= \left(T_{\beta}x, y^*\right)\in
\begin{split}
\left\{
\begin{array}{ll}
\displaystyle  \mathcal{R}_{\beta,0}, \quad &\text{if}~ d_1<b_{i+1}, \\
[5pt]
\displaystyle \mathcal{R}_{\beta,i+1}, \quad & \text{if} ~ d_1=b_{i+1},
\end{array} \right.
\end{split} 
\end{align}
where
\begin{align*}
y^{*}=
\begin{split}
\left\{
\begin{array}{ll}
\displaystyle \frac{b_1}{\beta}+\cdots+\frac{b_i}{\beta^i}+\frac{d_1}{\beta^{i+1}}+\frac{y}{\beta}=(b_1\cdots b_id_1c_{i+1}c_{i+2}\cdots)_\beta, \quad & \text{if}~ d_1<b_{i+1}, \\
[10pt]
\displaystyle \frac{y}{\beta}=(\underbrace{0\cdots0}_{i+1}c_{i+1}c_{i+2}\cdots)_\beta, \quad & \text{if} ~ d_1=b_{i+1}. 
\end{array} \right.
\end{split}
\end{align*}

An example of this construction is depicted in \Cref{Fig:NE_positive}. In this manner, the left sides of the rectangles are aligned, and the thicknesses of the rectangles are monotonically decreasing.
Its absolutely continuous invariant density (not normalized) is
given as a projection of the invariant measure (2-dimensional Lebesgue measure) 
of this natural extension:
$$
h_{\beta}(x)= \sum_{n\ge 0,~ x< T_{\beta}^n(1)}\frac{1}{\beta^n}.
$$

\begin{figure}[ht]
\centering
  \includegraphics[width=8.5cm]{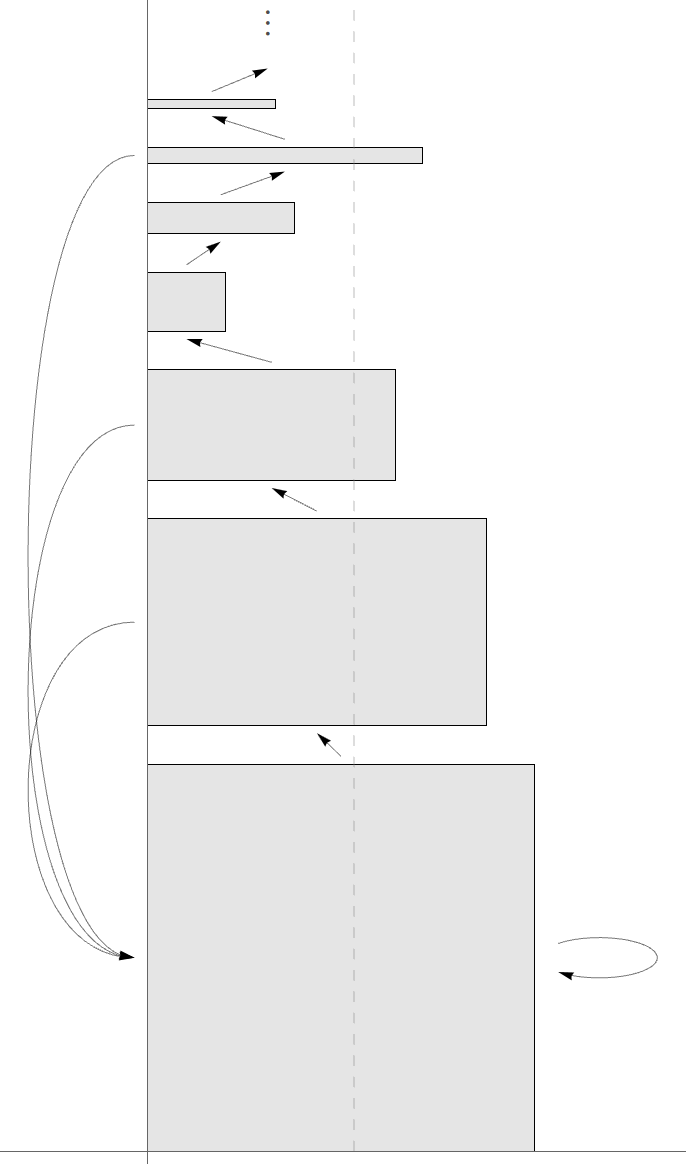} 
\caption{Natural Extension of the $\beta$-transformation for $d^*(1,\beta)=111001\sum_{k=1}^{\infty}0^k1$. (The dashed line is $x=1/\beta$.)}
\label{Fig:NE_positive}
\end{figure}

Ito-Sadahiro \cite{Ito-Sadahiro09} denoted by $I_\beta$ the half-open interval $[\ell_\beta, r_\beta)=[-\frac{\beta}{\beta+1}, \frac{1}{\beta+1})$ and defined $(-\beta)$-transformation on $I_\beta$ by 
$$
T_{-\beta}:x \longmapsto-\beta x -\left\lfloor -\beta x-\ell_\beta\right\rfloor.
$$
Then, for each $x\in I_\beta$, we have a $(-\beta)$-expansion $x=(d_1d_2\cdots)_{-\beta}$, where 
\begin{align*}
d_i=d_i(x):=\left\lfloor -\beta T_{-\beta}^{i-1}(x)-\ell_\beta \right\rfloor\in \A,
\end{align*}
and $\A = \{0,1,\ldots, \lfloor \beta \rfloor \}$. 
\textcolor{blue}{
By applying the theorem of Li and Yorke \cite{LiYorke}, it is known that
$T_{-\beta}$ has a unique invariant measure absolutely continuous with respect to the Lebesgue measure and hence is ergodic.}
The following invariant density is due to \cite{Ito-Sadahiro09}:
\begin{equation}
\label{Density}
h_{-\beta}(x)= \sum_{n\ge 0,~ T_{-\beta}^n(\ell_\beta)\le x}^{\infty}\frac{1}{(-\beta)^n}
\end{equation}
which is a solution of Kuzmin's equation. However, its negative terms make it difficult to obtain a clear understanding of its dynamical system. 

\begin{figure}[htbp]
\centering
\begin{tikzpicture}

\pgfmathsetmacro{\B}{2.543}
\pgfmathsetmacro{\L}{-5*\B/(\B+1)}
\pgfmathsetmacro{\R}{5/(\B+1)}

\draw[thin] (\R,\L) -- (\L,\L) node[below] {$\ell_\beta$};
\draw[thin] (\L,\L) -- (\L,\R);
\draw[very thin,gray,dashed] (\R,\R) -- (\L,\R);
\draw[very thin,gray,dashed] (\R,\L) -- (\R,\R);
\draw[->] ({\L-0.2},0) -- ({\R+0.2},0);
\draw[->] (0,{\L-0.2}) -- (0,{\R+0.2});

\draw[very thin,gray,dashed] (\R-5/\B,\L) -- (\R-5/\B,\R);
\draw[very thin,gray,dashed] (\R-10/\B,\L) -- (\R-10/\B,\R);

\pgfmathsetmacro{\Ra}{\R-5/\B}
\pgfmathsetmacro{\Rb}{\R-10/\B}

\draw[domain=\L:\R, very thin, gray] plot(\x,\x);
\draw[domain=\R-5/\B:\R] plot(\x,{-\B*\x}) node[below] {$r_\beta$};
\draw[domain=\Ra:\R] plot(\x,{-\B*\x});
\draw[domain=\Rb:\Ra] plot(\x,{-\B*\x-5});
\draw[domain=\L:\Rb] plot(\x,{-\B*\x-10});
\end{tikzpicture}
\caption{The graph of $T_{-\beta}$ $(\beta=2.54\dots)$}
\label{graph-T-beta}
\end{figure}

In this paper, we give a construction of the natural extension in this $(-\beta)$-expansion only using positive terms, analogous to (\ref{NaturalBeta}).
Bruin-Kalle \cite{BruinKalle} gave a general approach to this type of piecewise linear maps, assuming several axioms. We revisit this in the setting of $(-\beta)$-expansion and give a down-to-earth construction. 
\textcolor{red}{The main technical difficulty is to show that the corresponding countable Markov shift is positive recurrent. We directly prove this in \Cref{SystemPositiveRecurrent}
by the path counting method using a particular property of the associated Markov diagram 
in Corollary \ref{Defect}.}
The reader will notice that the countable Markov shift gives a concrete way for construction, i.e., one can determine the rectangles explicitly as in \Cref{Th:left and right eigenvector}, see \Cref{Fig:NE_Example1and2}. 
We observe that the natural extension is more involved than that of $\beta$-expansion: the thicknesses of the rectangles are not monotone decreasing, and the sides of the rectangles are not aligned, compare \Cref{Fig:NE_positive}.

\begin{figure}[p]
\centering
  \includegraphics[width=\textwidth,
  height=\textheight,
  keepaspectratio]{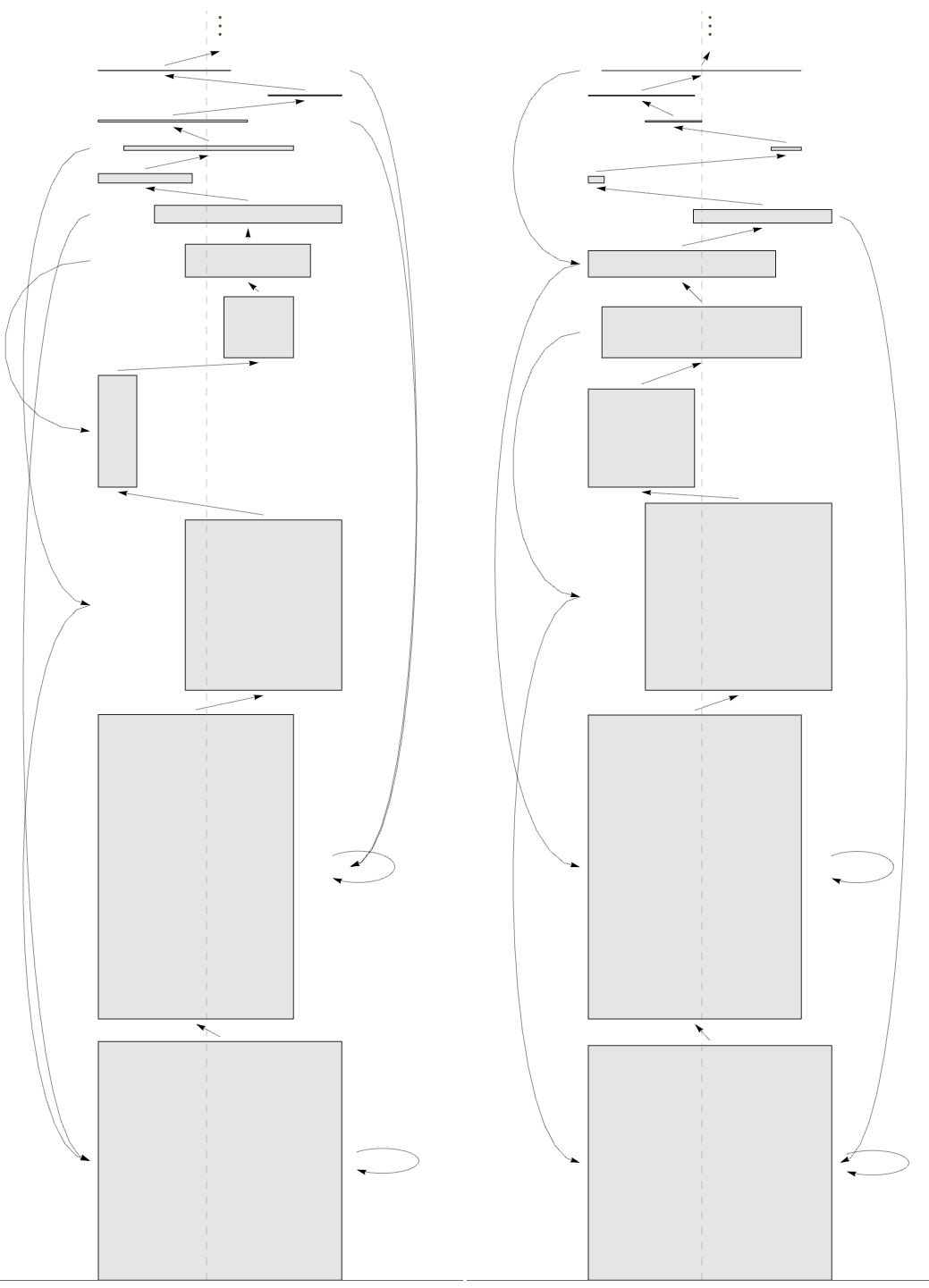} 
\caption{Natural Extension of \Cref{Example:1^k0^k} and \ref{Example:1 to 101 and 0 to 1}. (The dashed line is $x=r_{\beta}-1/\beta$.)}
\label{Fig:NE_Example1and2}
\end{figure}

For $(x_i),(y_i)\in \A^{\N}$, we define
$$
(x_i)\prec (y_i)
$$
if $(-1)^i x_i < (-1)^i y_i$ at 
the smallest index $i$ with $x_i\neq y_i$, and
$(x_i)\preceq (y_i)$ means $(x_i)=(y_i)$ or $(x_i)\prec (y_i)$.
Let $d(x,-\beta)=d_1d_2\dots$ by $d_i=\lfloor -\beta T_{-\beta}^{i-1}(x)-\ell_{\beta} \rfloor$ 
for $x\in [\ell_{\beta},r_{\beta})$. We set 
$d^*(\ell_{\beta},-\beta)=\lim_{x\searrow \ell_{\beta}} d(x, -\beta)$ and $d^*(r_{\beta},-\beta)=\lim_{x\nearrow r_{\beta}} d(x, -\beta)$. 
An infinite word $x_1x_2\dots \in \A^{\N}$ is called {\it admissible} if it is in $d([\ell_{\beta},r_{\beta}),-\beta)$.
Then, Theorem 10 in \cite{Ito-Sadahiro09} reads
the sequence $(x_i)\in \A^{\N}$ is admissible if and only if
\begin{equation}
\label{Adm}
d(\ell_{\beta},-\beta)\preceq x_nx_{n+1}\dots \prec d^*(r_{\beta},-\beta)
\end{equation}
for all $n\ge 1$.
When $\beta$ is fixed, this result
characterizes infinite words in $\A^{\N}$ which appear as a $(-\beta)$-expansion.

Let
$$
\eta:=
\lim_{\beta\to 1+0} d(\ell_{\beta},-\beta)=100111001001001110011\cdots
$$
be the fixed point of the substitution
$\phi: 1\to 100,\ 0\to 1$.
As an analogy of (\ref{SelfBeta}), W.~Steiner
characterized the words in $\A^{\N}$ that appear as the expansion of $\ell_{\beta}$, that is, the word of the form $d(\ell_{\beta},-\beta)$ for some $\beta>1$. 
\begin{Prop}[Steiner \cite{Steiner:13}]
\label{SteinerSelf}
Let $(e_i)\in \A^{\N}$. There exists $\beta>1$ that $d(\ell_{\beta}, -\beta)=e_1e_2e_3\cdots$.
if and only if it satisfies the following four conditions:
\begin{enumerate}
    \item $
e_1e_2e_3\cdots \preceq e_{n}e_{n+1}e_{n+2}\cdots $ for  $n=1,2,3,\dots$, 
\item $\eta \succ e_1e_2\cdots$,
\item $e_1e_2\cdots \not \in \{ e_1\cdots e_{k},e_1\cdots e_{k-1}(e_k-1)0\}^{\omega} \setminus \{(e_1\cdots e_k)^{\infty} \}$
for all $k\ge 1$ that $\eta\succ (e_1\cdots e_k)^{\infty}$,
\item $e_1e_2\cdots \not \in \{ e_1\cdots e_{k}0, e_1\cdots e_{k-1}(e_{k}+1)\}^{\omega}$
for all $k\ge 1$ that $\eta\succ (e_1\cdots e_{k-1}(e_k+1))^{\infty}$.
\end{enumerate}
\end{Prop}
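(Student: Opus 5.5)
This is Steiner's characterization, so the plan follows the standard route for such results: combine the Ito--Sadahiro admissibility criterion (\ref{Adm}) with a monotonicity and continuity analysis of the map $\beta\mapsto d(\ell_\beta,-\beta)$ in the alternating order $\prec$.

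\textbf{Necessity.} Fix $\beta>1$ and put $e=d(\ell_\beta,-\beta)$.
\begin{itemize}
\item Condition 1 is (\ref{Adm}) applied to the admissible word $e$ itself.
\item For condition 2, I will show that $\beta\mapsto d(\ell_\beta,-\beta)$ is strictly decreasing in $\prec$. Then $\eta$, its limit as $\beta\to1+0$, strictly dominates every value. Monotonicity would come from comparing $T^n_{-\beta}(\ell_\beta)$ for two values of $\beta$ through the cylinder structure. Since $-\beta$ reverses orientation, the alternating order matches the real order on cylinders.
\item Conditions 3 and 4 exclude the words lying at the jumps of this map. Such jumps occur exactly when $\ell_\beta$ is periodic, i.e.\ $T^k_{-\beta}(\ell_\beta)=\ell_\beta$, or when the orbit hits a discontinuity, i.e.\ $T^{k-1}_{-\beta}(\ell_\beta)$ is a left endpoint of a cylinder.
\item In the periodic case the genuine expansion is $(e_1\cdots e_k)^\infty$. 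The nearby limits from the other side are concatenations of $e_1\cdots e_k$ and $e_1\cdots e_{k-1}(e_k-1)0$, and these are expansions of $\ell_{\beta'}$ for no $\beta'$.
\item For the second family I would use the identity $d^*(r_\beta,-\beta)=0\,d(\ell_\beta,-\beta)$ whenever the latter is not purely periodic of odd period (Ito--Sadahiro). This shows the blocks $e_1\cdots e_k0$ and $e_1\cdots e_{k-1}(e_k+1)$ arise from the same phenomenon at the discontinuity $r_\beta$.
\end{itemize}

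\textbf{Sufficiency.} Given $e$ satisfying 1--4, I define $\beta$ as the real root of
$$\ell_\beta=\sum_i e_i(-\beta)^{-i}-\tfrac{\beta}{\beta+1},$$
properly normalized. I then show that the greedy $(-\beta)$-algorithm applied to $\ell_\beta$ reproduces $e$.
\begin{itemize}
\item Condition 2 guarantees a solution $\beta>1$ exists. The value function is continuous in $\beta$ and the boundary behaviour at $\beta\to1$ is governed by $\eta$.
\item Condition 1 gives the lower half of admissibility for every shift.
\item For the upper half $e_n e_{n+1}\cdots\prec d^*(r_\beta,-\beta)=0e$, I argue by contradiction. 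If it failed, the orbit would realize a tail equal to $0e$, which forces a decomposition of $e$ into the forbidden blocks of conditions 3 or 4.
\end{itemize}

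\textbf{Main obstacle.} The hard part is this last combinatorial step. One must show that a word satisfying condition 1 but violating the strict upper inequality necessarily lies in one of the $\omega$-languages of conditions 3 and 4, with the side condition on $\eta$. I would handle it by taking the smallest $k$ at which equality with $0e$ or with a periodic word first occurs, then iterating the comparison to produce the block factorization. This is exactly how Steiner's proof organizes it via $\phi$-like renormalization.
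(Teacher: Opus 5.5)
The paper does not prove this proposition; it is quoted from Steiner. Your sketch therefore has to stand on its own, and it does not. The sufficiency half breaks at the choice of $\beta$. You take ``the'' root of $-\frac{\beta}{\beta+1}=\sum_i e_i(-\beta)^{-i}$ (as displayed, your equation actually reduces to $\sum_i e_i(-\beta)^{-i}=0$). This equation can have several roots with $\lfloor\beta\rfloor=e_1$, and nothing in your argument selects the right one.

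The step ``condition 1 gives the lower half of admissibility'' conflates the symbolic inequality $e\preceq\sigma^{n}(e)$ with the numerical inequality $\ell_\beta\le\sum_{i\ge1}e_{n+i}(-\beta)^{-i}$. Criterion (\ref{Adm}) cannot bridge the two, because it compares tails with $d(\ell_\beta,-\beta)$, which is exactly the unknown. The same circularity affects your upper half, where you use $d^*(r_\beta,-\beta)=0e$.

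A concrete failure: $e=101100010001\,0^\infty$ satisfies all four conditions. The equation becomes $x^{13}-x^{12}-x^{11}-x^{10}+x^8+x^5+x^4+x+1=0$, with two roots $\approx1.18173$ and $\approx1.75517$ in $(1,2)$.
\begin{itemize}
\item At the larger root, $e$ is indeed $d(\ell_\beta,-\beta)$.
\item At the smaller root, $e\prec\sigma^3(e)=100010001\,0^\infty$. Yet the value of $\sigma^3(e)$ is $-\beta^{-1}-\beta^{-5}-\beta^{-9}\approx-1.50<\ell_\beta\approx-0.54$, and $d(\ell_\beta,-\beta)$ begins with $100$.
\end{itemize}
Your existence claim (``boundary behaviour governed by $\eta$'') is likewise only asserted.

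What works is to run sufficiency through the monotone map itself. Put $\beta:=\sup\{\beta'>1: d(\ell_{\beta'},-\beta')\succeq e\}$; this set is nonempty precisely because of condition 2. Then show $d(\ell_\beta,-\beta)=e$ unless $e$ falls into a jump of the map at $\beta$. That case is where conditions 3 and 4 must be used.

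On the necessity side, condition 1 is fine. The load-bearing lemma, strict $\prec$-monotonicity of $\beta\mapsto d(\ell_\beta,-\beta)$, is asserted without argument. In the negative case it is not a one-liner, since $\beta\mapsto\sum_i w_i(-\beta)^{-i}$ is not monotone and $\ell_\beta$ moves with $\beta$.

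Your description of the jumps is also off. The two mechanisms you list coincide: every partition point satisfies $T_{-\beta}(r_\beta-i/\beta)=\ell_\beta$, so the orbit hits a discontinuity iff $\ell_\beta$ is periodic. Condition 4 is not a separate phenomenon at $r_\beta$.

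Suppose $d(\ell_{\beta_0},-\beta_0)=p^\infty$ with $|p|=k$, and let $q$ be $p$ with its last digit lowered by one, followed by $0$. Granting monotonicity, one finds:
\begin{itemize}
\item for $k$ odd, $d(\ell_\beta,-\beta)\to pq^\infty$ as $\beta\downarrow\beta_0$ and $\to q^\infty$ as $\beta\uparrow\beta_0$;
\item for $k$ even, the map is continuous from the left and tends to $qp^\infty$ from the right.
\end{itemize}
Condition 3 removes the words of $\{p,q\}^\omega$ beginning with $p$. Condition 4, read with $p=e_1\cdots e_{k-1}(e_k+1)$ and $q=e_1\cdots e_k0$, removes those beginning with $q$: the other side of the same odd jump, or the only side of an even one.

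So necessity needs these one-sided limit computations and the fact that the condition-1 words of these $\omega$-sets lie in the corresponding gaps of the image. Sufficiency needs the converse combinatorial lemma: any word satisfying condition 1 inside such a gap belongs to one of these $\omega$-sets. Your ``main obstacle'' paragraph points at that lemma, but attached to the root-based argument it would not close the proof.
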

Here, $\{x,y\}^{\omega}$ denotes the set of infinite words generated by the concatenation of $x$ and $y$ in arbitrary order.

In section 2 and later, we have to use $d^*(\ell_{\beta},-\beta)$ to construct the Markov diagram
instead of $d(\ell_{\beta},-\beta)$, because we use the corresponding symbolic dynamics on 
clopen sets. 
Fortunately, 
\begin{equation}
\label{OddPeriodcase}
d^*(\ell_{\beta},-\beta)=d(\ell_{\beta},-\beta)
\end{equation}
holds for most values of $\beta$, except when the orbit of $\ell_{\beta}$ by $T_{-\beta}$ forms an odd cycle. 
Indeed, if the orbit does not
revisit $\ell_{\beta}$, then the coding map
is continuous at $\ell_{\beta}$. Further, if it
forms an even cycle, then it is right continuous at $\ell_{\beta}$.

First, we discuss exceptional periodic expansions that cause some technical difficulty. 
We found such phenomena in \Cref{OddCycle} and \textcolor{red}{\Cref{Exception}}. See also
\cite[Proposition 5]{Ito-Sadahiro09} and its remark afterward. 

\begin{Lem}\label{OddCycle}
If the expansion is purely periodic:
$d(\ell_{\beta},-\beta)=(c_1c_2\dots c_{\ell})^{\infty}$, then $c_{\ell}\ge 1$. Further, if $\ell$ is odd, then
we have
$$
d^*(\ell_{\beta},-\beta)=(c_1c_2\dots (c_{\ell}-1)0)^{\infty}
$$
and
$$
d^*(r_{\beta},-\beta)=(0c_1c_2\dots (c_{\ell}-1))^{\infty}
$$
and the set of 
sequence $(x_i)\in \A^{\N}$ which satisfies (\ref{Adm}) but does not satisfy
$$
d^*(\ell_{\beta},-\beta)\preceq x_nx_{n+1}\dots \prec d^*(r_{\beta},-\beta)
$$
is countable. Such elements have the suffix $(c_1c_2\dots c_{\ell})^\infty$. 
\end{Lem}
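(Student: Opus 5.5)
The plan is to argue directly from the definition of $T_{-\beta}$ and its one-sided continuity properties.

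\emph{Step 1: $c_\ell\ge 1$.} Suppose $c_\ell=0$. The digit $0$ is produced exactly on the rightmost branch, where $-\beta x-\ell_\beta\in[0,1)$, i.e.\ $x\in(r_\beta-1/\beta,r_\beta]$ intersected with $I_\beta$. On that branch $T_{-\beta}x=-\beta x$. Pure periodicity gives $T_{-\beta}^{\ell}(\ell_\beta)=\ell_\beta$. Hence the point $z=T_{-\beta}^{\ell-1}(\ell_\beta)$ satisfies $-\beta z=\ell_\beta$, so $z=1/(\beta+1)=r_\beta\notin I_\beta$, a contradiction. Thus $c_\ell\ge1$.

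\emph{Step 2: the one-sided limits.} The map $T_{-\beta}$ is decreasing on each branch and continuous from the left at branch endpoints in the appropriate sense. As $x\searrow\ell_\beta$, we have $T_{-\beta}x\nearrow T_{-\beta}\ell_\beta$ as long as no discontinuity is crossed. Inductively, $T^{j}_{-\beta}x$ approaches $T^j_{-\beta}\ell_\beta$ from the right when $j$ is even and from the left when $j$ is odd, and the digits agree with $c_{j+1}$ for $j<\ell$. The only possible discontinuity is at a left endpoint of a branch being approached from the left. The digit sequence stays constant in a one-sided neighbourhood until step $\ell-1$. At $j=\ell-1$ (an even index, since $\ell$ is odd) the point $T^{\ell-1}x$ lies to the right of $z=T^{\ell-1}\ell_\beta$, and $T^\ell x$ approaches $\ell_\beta$ from the left.

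I need to check whether $z$ is a branch boundary; it is, because its image is $\ell_\beta$, the left endpoint of $I_\beta$. So $-\beta z-\ell_\beta=c_\ell$ is an integer, and approaching from the right gives digit $c_\ell-1$ with image approaching $r_\beta$ from the left. The orbit of $r_\beta^-$ first gives digit $0$, then $T(r_\beta^-)=\ell_\beta^+$. Hence
\[
d^*(r_\beta,-\beta)=0\,d^*(\ell_\beta,-\beta),\qquad d^*(\ell_\beta,-\beta)=c_1\cdots c_{\ell-1}(c_\ell-1)\,d^*(r_\beta,-\beta).
\]
Solving this pair of equations gives both periodic formulas.

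\emph{Step 3: the countable exceptional set.} Write $u=(c_1\cdots c_\ell)^\infty$ and $u^*=(c_1\cdots(c_\ell-1)0)^\infty$. A direct comparison shows $u\prec u^*$: they first differ at the odd position $\ell$, where $c_\ell>c_\ell-1$, so the alternating order reverses the digit comparison. By \eqref{Adm} the difference between the two admissibility conditions is precisely the set of admissible $(x_i)$ having some shift $x_nx_{n+1}\cdots$ in $[u,u^*)$ (in the order $\preceq$).

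The main obstacle is to show that any admissible tail $w$ with $u\preceq w\prec u^*$ must equal $u$. I would argue by comparing block by block. If $w\ne u$, let $w$ first deviate from $u$ at some position. Since $w\succ u$, either the deviation is already the change $c_\ell\to c_\ell-1$ forced toward $u^*$, or shifting $w$ to the start of the deviating block yields a tail $\prec d(\ell_\beta,-\beta)$, violating \eqref{Adm}. The parity bookkeeping uses the fact that $\ell$ is odd, so consecutive blocks alternate orientation. In the remaining case the next digit is forced to be $0$ and the suffix is again compared against $d^*(r_\beta,-\beta)$, which gives $w\succeq u^*$, a contradiction.

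Hence every exceptional sequence has a tail equal to $u$, so it has suffix $(c_1\cdots c_\ell)^\infty$. The set of such sequences is countable, since each is determined by a finite prefix together with the common periodic tail.
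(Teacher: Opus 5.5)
Your Steps 1 and 2 follow the paper's argument. Step 1 is the contradiction $T_{-\beta}^{\ell-1}(\ell_\beta)=r_\beta$. Step 2 uses right-discontinuity at step $\ell-1$ together with $T_{-\beta}(r_\beta^-)=\ell_\beta^+$ to get the two limit words. Both you and the paper silently take $\ell$ to be the minimal period. That is what guarantees $T_{-\beta}^{j}(\ell_\beta)$ is not a partition point for $j<\ell-1$, and the formula for $d^*(\ell_\beta,-\beta)$ fails otherwise (e.g.\ $\beta=2$ written as $(222)^\infty$).

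In Step 3, the reduction to admissible tails $w$ with $u\preceq w\prec u^*$ is correct. So is your treatment of the branch $w_\ell=c_\ell-1$: the digit $w_{\ell+1}=0$ is forced, and $0\,\sigma^{\ell+1}w\prec 0u^*=d^*(r_\beta,-\beta)$ gives $\sigma^{\ell+1}w\succ u^*$, hence $w\succ u^*$. This makes explicit a case the paper's proof never states.

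The gap is in the main case, where $w$ begins with $c_1\cdots c_\ell$. Shifting $w$ ``to the start of the deviating block'' does not in general produce a tail $\prec u$.
\begin{itemize}
\item Let the first deviation from $u$ be at $i=q\ell+r$ with $1\le r\le\ell$, so $q\ge 1$. Then $\sigma^{q\ell}w$ first differs from $u$ at position $r$.
\item Since $(-1)^i(w_i-c_r)>0$ and $(-1)^r=(-1)^{q\ell}(-1)^i$, the quantity $(-1)^r(w_i-c_r)$ has the sign of $(-1)^{q\ell}=(-1)^q$, because $\ell$ is odd.
\item Hence $\sigma^{q\ell}w\prec u$ only when $q$ is odd. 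For even $q$ that tail is $\succ u$, and (\ref{Adm}) is not violated.
\item For $\ell=1$ your prescription never works: $w\succ c_1^\infty$ with $w_i<c_1$ forces $i$ odd, i.e.\ $q=i-1$ even.
\end{itemize}
Your remark that ``consecutive blocks alternate orientation'' contains the right idea, but it is attached to the wrong shift.

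The fix is to shift by exactly one block. The tail $\sigma^\ell w$ first differs from $\sigma^\ell u=u$ at position $i-\ell\ge 1$, with the opposite orientation. So $w\succ u$ forces $\sigma^\ell w\prec u$, contradicting (\ref{Adm}). This is the paper's squeeze, done there digit by digit: the lower bounds $u\preceq x_nx_{n+1}\cdots$ and $u\preceq x_{n+\ell}x_{n+\ell+1}\cdots$ impose opposite inequalities on each $x_{n+\ell+k}$, forcing $x_{n+\ell+k}=c_{k+1}$. With this correction your proof is complete, and slightly more thorough than the paper's.
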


\begin{proof}
If $d(\ell_{\beta},-\beta)=(c_1c_2\dots c_{\ell-1}0)^{\infty}$, then
$d(T_{-\beta}^{\ell-1}(\ell_{\beta}),-\beta)=(0c_1c_2\dots c_{\ell-1})^{\infty}$
which implies the contradiction
$T_{-\beta}^{\ell-1}(\ell_{\beta})=r_{\beta}$,
since $\ell_{\beta}=(-\beta)r_{\beta}$.
If $d(\ell_{\beta},-\beta)=(c_1c_2\dots c_{\ell})^{\infty}$,
then $T_{-\beta}^{\ell}(\ell_{\beta})=\ell_{\beta}$.
If $\ell$ is odd, then $T_{-\beta}^{\ell}$ is right discontinuous at 
discontinuity points and we have $d^*(\ell_{\beta},-\beta)=
\lim_{\eps\downarrow 0}d(\ell_{\beta}+\eps,-\beta)=
(c_1c_2\dots (c_{\ell}-1) 0)^{\infty}$ since $\ell_{\beta}=(-\beta)r_{\beta}$
implies $\lim_{\eps\downarrow 0} T_{-\beta}(r_{\beta}-\eps)=\ell_{\beta}$.
We also have $d^*(r_{\beta},-\beta)=\lim_{\eps\downarrow 0}d(r_{\beta}-\eps,-\beta)=
(0c_1c_2\dots (c_{\ell}-1))^{\infty}$.

For the last statement, let $(x_i)$ be a sequence satisfying this condition.
Then if $x_nx_{n+1}\dots x_{n+\ell-1}=
c_1\dots c_{\ell}$, then $x_{n+\ell} \ge c_1$ implies
$x_{n+\ell}=c_1$. We also have $c_2\le x_{n+\ell+1}\le c_2$ which implies $x_{n+\ell+1}=c_2$. Repeating this,
we see that $x_nx_{n+1}\dots = (c_1c_2\dots c_{\ell})^{\infty}$.
\end{proof}


\begin{Ex}\label{Example:1^k0^k}
Let $(e_i)=\sum_{k=1}^{\infty}
1^k0^k=10110011100011110000\dots
$.
Since the prefix $101$ appears only once, we can easily see that 
conditions of \Cref{SteinerSelf} are satisfied. 
There exists $\beta\approx 1.80266$ and we see that $d(\ell_{\beta},-\beta)=(e_i)_{i\ge0}$.

One can also show that $\beta$ is transcendental.
Indeed, 
\begin{align*}
 -\frac{\beta}{1+\beta}&=
\sum_{i=1}^{\infty} \frac{e_i}{(-\beta)^i}=
\sum_{k=1}^{\infty}
\frac{1}{(-\beta)^{k^2-k+1}}\frac {1-1/(-\beta)^{k}}{1-1/(-\beta)}\\
&=\frac{-1/\beta+(-1/\beta)^{3/4}\theta_{2}(-1/\beta)+
\theta_{3}(-1/\beta)/\beta}{2(1+1/\beta)}
\end{align*}
where $\theta_2,\theta_3$ are the elliptic theta null values:
$$
\theta_2(q)=\sum_{n\in \Z} q^{(n+1/2)^2}, \qquad
\theta_3(q)=\sum_{n\in \Z} q^{n^2}
$$
with $|q|<1$. By the algebraic independence result of theta null values, $\beta$ cannot be algebraic, see Theorem 4 of \cite{Bertrand} and \cite{Nesterenko}. For $d\ge2$, $\sum_{k=1}^{\infty}d^k(d-1)^k\cdots1^k0^k$ 
does not satisfy Condition 1.
\end{Ex}

\begin{Ex}\label{Example:1 to 101 and 0 to 1}
Let $\psi$ be a substitution defined by $1\to 101$ and $0\to 1$. We claim that the fixed point
$$
w:=(e_i)=\lim_{n\to \infty}\psi^n(1)=101 110110110111011011101\dots
$$
satisfies Conditions 1 and 2. We introduce
a $\Z/2\Z$-extension $\overline{\psi}$: 
$$1\mapsto 101,\quad 0\mapsto \overline{1}, \quad \overline{1}\mapsto \overline{1}\overline{0}\overline{1}, \quad \overline{0}\mapsto 1.
$$
Here, the letter $1$ is in the odd position, and $\overline{1}$ is in the even position. Similarly 
$0$ is in the even position and $\overline{0}$ is in the odd position. 
$$
\overline{w}:=
\lim_{n\to \infty}\overline{\psi}^n(1)=101\overline{1}101\overline{1}\overline{0}\overline{1}101\overline{1}101\overline{1}\overline{0}\overline{1}1\overline{1}\overline{0}\overline{1}\dots
$$
In other words, the overline indicates that the letter $\overline{1}$ (resp. $\overline{0}$) can not be replaced with $0$ (resp. $1$)
by Condition 1. Therefore
\begin{equation}
\label{forbidden}
    1010, 10111010, 101110111, 1011101100, 10111011011010,  \dots
\end{equation}
are forbidden by Condition 1. To prove the claim, we show that these words never appear as a factor of $w$. By construction, $00$, $010$ are forbidden in $w$ and
$0\overline{1}$, $\overline{1}0$ are forbidden in $\overline{w}$, in particular, the first $1010$ does not appear. If a factor in (\ref{forbidden}) appears in $w$, then it contains either one of the above forbidden words, or it
must have a preimage of $\phi$ which forms a shorter forbidden word in (\ref{forbidden}). This gives a contradiction, and we have established the claim.

Since $00$ is a forbidden word and $e_k=1$, Condition 3 of \Cref{SteinerSelf} is fulfilled. Similarly, Condition 4
is valid, since $e_k=0$. Therefore, Proposition \ref{SteinerSelf} guarantees that we have
$w=d(\ell_{\beta},-\beta)$
with a unique $\beta \approx 1.875305172$.

\end{Ex}

\section{Hofbauer's Markov Diagram}\label{Hofbauer tower and Markov Diagram}

In this section, we recall piecewise monotone maps on the unit interval $[0,1]$ (of course, it can also be $[\ell_{\beta}, r_{\beta}]$) and their Markov diagrams.
\begin{Def}[piecewise monotone map]
We call $T:[0,1]\to[0,1]$ a piecewise monotone map if there exist disjoint intervals $I_0,\cdots, I_k\subset [0,1]$
with the partition points $c_0=0<c_1< \ldots < c_{k+1}=1$
satisfying the following conditions:
\begin{enumerate}
    \item $[0,1]=\displaystyle \bigcup_{j=0}^k I_j$,
    \item $T|_{I_j}$ is continuous and strictly monotone,
    \item $\displaystyle \bigcup_{n=1}^{\infty}T^{-n}\left([0,1]\setminus \bigcup_{j=0}^k Int(I_j)\right)$ is dense in $[0,1]$.
\end{enumerate}
\end{Def}

Let
\begin{equation*}\label{clopenset}
X_{T}:=\bigcap_{n=1}^{\infty}T^{-n}\left(\bigcup_{j=0}^k Int(I_j)\right),
\end{equation*}
and define the coding map 
$\Psi:X_T\to \Sigma_k^{+}=\{0,1,2,\dots,k\}^{\mathbb{N}}$ by $\Psi(x)=j_1j_2\cdots$, where $j_i$ is the index 
with $T^{i-1}(x)\in I_{j_i}$. Set 
$$
\Sigma_T^{+}=\{{\bf x}=x_1x_2\cdots\in \Sigma_k^{+}\mid a^{x_m}\preceq \sigma^{m-1}({\bf x})=x_{m}x_{m+1}\cdots\preceq b^{x_m}
\mbox{~ for any } m \in \mathbb{N} \},
$$ 
where $\sigma$ is a shift transformation and 
\begin{align*}
a^{j}=\lim_{x\in I_j, ~x\searrow c_{j}}\Psi(x), \quad
 b^{j}=\lim_{x\in I_{j}, ~x\nearrow c_{j+1}}\Psi(x).
\end{align*}

Then, $\Psi$ is injective, and the following diagram is commutative. It is surjective except for a countable set \cite[Lemma 2]{Hofbauer79}:
\[
  \begin{CD}
     {X_T} @>{T}>> {X_T} \\
  @V{\Psi}VV    @V{\Psi}VV \\
     {\Sigma_{T}^{+}} @>{\sigma}>> {\Sigma_{T}^{+}}
  \end{CD}
\]

Here, we denote 
$\Psi(I_{w_1} \cap T^{-1}(I_{w_2}) \cap \cdots\cap T^{-n+1}(I_{w_n}) \cap X_T)$ by $[w_1w_2\cdots w_n]$
for $n \in \mathbb{N}$ and 
$w_1w_w \cdots w_n \in \{0,1,\ldots,k \}^\mathbb{N}$.

We now define the Markov diagram of Hofbauer, which is a countable directed graph whose vertex set is   $\Psi(X_{T})$.

\begin{Def}[Hofbauer's Markov Diagram]\label{Def:Markov Diagram}
    Let $X\subset \{0,1,2,\dots, k\}^{\mathbb{N}}$ and $C, D\subset X$. We denote $C \rightarrow D$ if there exists $i\in\{0,1,2,\dots, k\}$ such that $D=\sigma(C)\cap [i]\not=\emptyset$.
Define $\mathcal{D}_i$ inductively as follows:
\begin{align*}
    \mathcal{D}_0:&=\{[0],[1],[2],\dots, [k]\}, \\
    \mathcal{D}_n:&=\{D \mid \exists C\in \mathcal{D}_{n-1}~ s.t.~ C\rightarrow D \} \quad \text{for~} n\in\mathbb{N}.
\end{align*}
Set $\mathcal{D}_X=\bigcup_{n=0}^{\infty}\mathcal{D}_n$. The pair $(\mathcal{D}_X,\rightarrow)$ is called Hofbauer's Markov diagram. 
\end{Def}

When $C \to D$, there exists unique $i \in \{0,1,\ldots, k\}$ such that $D \subset [i]$, and we label the arrow from $C$ to $D$ as $i$ and 
write  $C \xrightarrow{i} D$.
We consider the set of infinite paths on Hofbauer's Markov diagram $(\mathcal{D}_X, \rightarrow)$:
\[
\Sigma_{\mathcal{D}_X}^+ := \{ (D_i)_{i \in \mathbb{N}}
\in \mathcal{D}_X^{\mathbb{N}} \mid D_{i} \rightarrow D_{i+1} \mbox{ for any } n \in \mathbb{N}\},
\]
and define $\Phi^+: \Sigma_{\mathcal{D}_X}^+  \to \{1,2,\ldots,k\}^\mathbb{N}$ by
$\Phi((D_i)_{i \in \mathbb{N}}) = (e_i)_{i \in \mathbb{N}}$ for $(D_i)_{i \in \mathbb{N}} \in \mathcal{D}_X^{\mathbb{N}} $
with $D_i \xrightarrow{e_i} D_{i+1}$. Then
$\Sigma_T^+ = \Phi^+( \Sigma_{\mathcal{D}_X}^+)$ (See \cite{Hofbauer86}).


For the original definition and study on Hofbauer's Markov diagram, we can see \cite{Hofbauer79}, \cite{Hofbauer81II}, and \cite{Hofbauer81}. In particular, \cite{Hofbauer86} includes the case of the decreasing map $T$, and \cite{BruinKalle} provides several examples of towers for $\beta$-transformation and $(-\beta)$-transformation.

\textcolor{blue}{
Before starting the case of $(-\beta)$-transformation $T_{-\beta}$, let us recall
the case of $\beta$-transformation $T_\beta$. It is known that the Markov diagram is obtained in the following way by using the expansion 
 $d^*(1,\beta)= b_1 b_2 \cdots$ of 1.
 \textcolor{red}{
 The cylinders $[0],\dots,[b_1-1]$ work in the same way and merge into a single state $0$. The cylinder $[b_1\dots b_{n-1}]$ corresponds to the state $n$ for $n\ge 1$.
 }
The vertex set of the Markov diagram is $\mathbb N \cup \{0\}$, and there exists an uparrow from the state $n$ to the state $n+1$ labeled by $b_{n+1}$ and
downarrows from the state $n$ to the state $0$ labeled by $0,1,\ldots,b_{n+1}-1$ for $n \in \mathbb N \cup \{0\}$
(See \Cref{Markov diagram for beta-transformation}).}

\begin{figure}[htbp]
    \centering
\begin{tikzpicture}[auto=right, bezier bounding box,
     node distance =15mm,
     start chain = going right, 
 every edge/.style = {draw, -Stealth, semithick},
 every state/.style = {draw, thick, on chain}]
    \begin{scope}[nodes=state]
\node (A) {$0$};
\node (B) {$1$};
\node (C) {$2$};
\node (D) {$n$};
\coordinate (E);
    \end{scope}
\path   (A) edge[loop left, looseness=12,  "$0\text{,}...\text{,~}b_1-1$" '] (A);
\path[left, swap] 
        (A) edge["$b_1$"]  (B)
        (B) edge["$b_2$"]  (C)
        (C) edge[dashed]  (D)
        (D) edge["$b_{n+1}$"]  (E);
\path[out=220, swap]
        (B) edge[in=-30, "$0\text{,}... \text{,~}b_2-1$"]  (A)
        (D) edge[in=-45, "$0\text{,}... \text{,~}b_{n+1}-1$"]  (A);
\path[out=135]
        (C) edge[in=40, "$0\text{,}... \text{,~}b_{3}-1$"]  (A);
    \end{tikzpicture}
\caption{Markov diagram for the $\beta$-transformation}
    \label{Markov diagram for beta-transformation}
\end{figure}

From now on, we study the case of ($-\beta$)-transformation.
Let $d^{*}(\ell_{\beta}, -\beta)=b_1b_2\cdots$.
Further, let the partition points $c_0=r_\beta >c_1> \ldots >c_{b_1+1}=\ell_\beta$ be
$c_i=r_\beta-i/\beta$ for $i \in \{0,1,\ldots,b_1\}$ and 
$I_0 =(c_1,c_0], I_1=(c_2,c_1], I_2=(c_3,c_2],\ldots,
I_{b_1}=[c_{b_1+1}, c_{b_1}]$.

The ($-\beta)$-transformation $T_{-\beta}$ is a piecewise monotone map, and we can do the same as in the case of the map on $[0,1]$.

For brevity, we write 
$T_{-\beta}^{n-1} (\ell_{\beta}^+) \in I_{b_n}$,
to mean that
$T_{-\beta}^{n-1}(\ell_{\beta}+\varepsilon)\in
I_{b_n}$ 
for all sufficiently small $\varepsilon>0$.
\textcolor{red}{
By the definition of $T_{-\beta}$, we have $b_2<b_1$.}
Since $\overline{\Psi^{-1}(\sigma([i]))}=[\ell_{\beta},r_{\beta}]=\cup_{j=0}^{b_1} \overline{I_j}$ for 
$i\not=b_1$, 
$[i]\xrightarrow{j} [j]$ for $i \in\{0,1,\dots,\textcolor{blue}{b_1-1}\}$ and $j \in\{0,1,\dots,b_1\}$.
Since 
$T_{-\beta}(\ell_{\beta}^+) \in I_{b_2}$
and
$\overline{\Psi^{-1}(\sigma([b_1]))}=[\ell_{\beta},T_{-\beta}(\ell_{\beta}^+)]=\cup_{j=b_2+1}^{b_1}\overline{I_j} 
\cup [c_{b_2+1},T_{-\beta}(\ell_{\beta}^+)]$, 
$$
\begin{array}{l}
[b_1]\xrightarrow{j} [j] \quad \quad \mbox{ for } j\in\{b_2+1,\dots,b_1-1\},\\
[10pt]
[b_1] \xrightarrow{b_1}[b_1],\\
[10pt]
[b_1]\xrightarrow{b_2} \sigma([b_1])\cap[b_2]=\sigma([b_1b_2])=\Psi([c_{b_2+1}, T_{-\beta}(\ell_{\beta}^{+}))\cap X_T).
\end{array}
$$
Similarly, since
$T_{-\beta}^2(\ell_{\beta}^+) \in I_{b_3}$
and
$\overline{\Psi^{-1}(\sigma^2([b_1b_2]))}=[T_{-\beta}^2(\ell_\beta^+),r_\beta]=\cup_{j=0}^{b_3-1}\overline{I_j} \cup [T_{-\beta}^2(\ell_\beta^+),c_{b_3}]$,
$$
\begin{array}{l}
\sigma[b_1b_2] \xrightarrow{j} [j] \quad \quad \mbox{ for } j \in\{0,1,\dots,b_3-1\},\\
[10pt]
\sigma[b_1b_2] \xrightarrow{b_3} \sigma^2[b_1b_2b_3],
\end{array}
$$
and we can see
\begin{align*}
    &\mathcal{D}_{1}\setminus \mathcal{D}_0=\{\sigma([b_1b_2])\},~ 
    \mathcal{D}_{2}\setminus (\mathcal{D}_0\cup \mathcal{D}_1)=\{\sigma^2([b_1b_2b_3])\}, \cdots.
\end{align*}
In the case of a $(-\beta)$-transformation, there is only one non-full part such that  
\[
\mathcal{D}_{n}\setminus \bigcup_{i=0}^{n-1}\mathcal{D}_i=\{\sigma^n([b_1b_2\cdots b_{n+1}])\}.  
\]
\Cref{Markov diagram} gives a figure of Hofbauer's Markov diagram.
In fact, by letting the state $0$ of the diagram given by \Cref{Markov diagram} denote the collection of $[0], [1], \ldots, [b_1-1]$ and the state $n$ for $n\in \mathbb{N}$ denote $\sigma^{n-1}([b_1 b_2 \cdots b_{n}])$, this diagram becomes Hofbauer's Markov diagram.

\begin{figure}[htbp]
    \centering
\begin{tikzpicture}[auto=right, bezier bounding box,
     node distance =15mm,
     start chain = going right, 
 every edge/.style = {draw, -Stealth, semithick},
 every state/.style = {draw, thick, on chain}]
    \begin{scope}[nodes=state]
\node (A) {$0$};
\node (B) {$1$};
\node (C) {$2$};
\node (D) {$n$};
\node (E) {$s$};
\coordinate (F);
    \end{scope}
\path   (A) edge[loop left, looseness=12,  "$0\text{,}...\text{,~}b_1-1$" '] (A)
        (B) edge[loop above, looseness=12,  "$b_1$" '] (B);
\path[left, swap] 
        (A) edge["$b_1$"]  (B)
        (B) edge["$b_2$"]  (C)
        (C) edge[dashed]  (D)
        (D) edge[dashed]  (E)
        (E) edge[dashed]  (F);
\path[out=220, swap]
        (B) edge[in=-30, "$b_2+1\text{,}... \text{,~}b_1-1$"]  (A)
        (D) edge[in=-45, "$b_{n+1}+1\text{,}... \text{,~}b_{1}-1$"]  (A)
        (E) edge[in=-60, "$b_s+1\text{,}... \text{,~}b_{n}-1$"]  (A);
\path[out=135]
        (D) edge[in=40, "$b_1$"]  (B)
        (E) edge[in=45, "$b_{n}$"]  (D);
    \end{tikzpicture}
\caption{Hofbauer's Markov diagram when $n, \textcolor{red}{s}$ is odd number}
    \label{Markov diagram}
\end{figure}

\textcolor{blue}{In the above discussion, the interval
$\overline{\Psi^{-1}(\sigma^n([b_1b_2\cdots b_n]))}$ 
and its endpoint $T_{-\beta}^m(\ell_\beta^+)$ greatly help in finding edges from the state $n$ in Hofbauer's Markov diagram. We wish to continue
this discussion using a realization as a tower. 
Let $R_0=[\ell_\beta,r_\beta]$  and
$R_n=\overline{\Psi^{-1}(\sigma^n([b_1b_2\cdots b_n]))}$
for $n \in \mathbb{N}$,
and stack these intervals vertically to geometrically realize  Hofbauer's Markov diagram as in \Cref{Hofbauer tower}. 
Later, we will see in Theorem \ref{Th:left and right eigenvector} that the exact locations and lengths of these intervals
correspond to the shape of the right eigenvector of the incidence matrix of the Markov diagram.
In what follows, we construct Hofbauer's Markov diagram by using its tower realization depicted in \Cref{Hofbauer tower}.}

\begin{figure}[ht]
    \centering
    \begin{tikzpicture}[scale=0.96][domain=-0.6:6]
\draw[->] (2.7,10.2) -- (2.7,10.8);
\fill (2.7,10.5) node[right] {$b_{s+1}$};
\draw (1.5,10) node[left] {$T_{-\beta}^{n-1}(\ell_{\beta}^+)$} -- (2.8,10) node[right] {$T_{-\beta}^{s}(\ell_{\beta}^+)$};
\fill (2,9.5) circle (0.5pt);
\fill (2,9) circle (0.5pt);
\fill (2,8.5) circle (0.5pt);
\draw (2,8) node[left] {$T_{-\beta}^{m+1}(\ell_{\beta}^+)$} -- (2.3,8) node[right] {$T_{-\beta}(\ell_{\beta}^+)$};
\draw[->] (0.15,7.2) -- (2.15,7.8);
\draw (0,7) node[left] {$\ell_{\beta}$} -- (0.3,7) node[right] {$T_{-\beta}^{m}(\ell_{\beta}^+)$};
\fill (2,6.5) circle (0.5pt);
\fill (2,6) circle (0.5pt);
\fill (2,5.5) circle (0.5pt);
\draw (1.8,5) node[left] {$T_{-\beta}^{n+1}(\ell_{\beta}^+)$} -- (4,5) node[right] {$r_{\beta}$};
\draw[->] (2.8,4.2) -- (2.8,4.8);
\fill (2.8,4.5) node[right] {$b_{n+1}$};
\draw (0,4) node[left] {$\ell_{\beta}$} -- (3.5,4) node[right] {$T_{-\beta}^{n}(\ell_{\beta}^+)$};
\fill (2,3.5) circle (0.5pt);
\fill (2,3) circle (0.5pt);
\fill (2,2.5) circle (0.5pt);
\draw (1.5,2) node[left] {$T_{-\beta}^2(\ell_{\beta}^+)$} -- (4,2) node[right] {$r_{\beta}$} ;
\draw[->] (2.7,1.2) -- (2.7,1.8);
\fill (2.7,1.5) node[right] {$b_2$};
\draw (0,1) node[left] {$\ell_{\beta}$} -- (3,1) node[right] {$T_{-\beta}(\ell_{\beta}^+)$} ;
\draw[->] (0.5,0.2) -- (0.5,0.8);
\fill (0.5,0.5) node[right] {$b_1$};
\draw (0,0) node[left] {$\ell_{\beta}$} -- (4,0) node[right] {$r_{\beta}$};

\draw (1.5,10) circle (1.5pt);  
\draw (2.8,10) circle (1.5pt);  

\draw (2,8) circle (1.5pt);  
\draw (2.3,8) circle (1.5pt);  

\draw (0,7) circle (1.5pt); 
\draw (0.3,7) circle (1.5pt); 

\draw (1.8,5) circle (1.5pt);  
\draw (4,5) circle (1.5pt);  

\draw (0,4) circle (1.5pt);  
\draw (3.5,4) circle (1.5pt);  

\draw (4,2) circle (1.5pt);
\draw (1.5,2) circle (1.5pt);

\draw (0,1) circle (1.5pt);  
\draw (3,1) circle (1.5pt);  

\draw (0,0) circle (1.5pt);
\draw (4,0) circle (1.5pt);   

\draw[->] (-0.5,10) -- (-1,10) -- (-1,4.1) --  (-0.5,4.1);
\draw[->] (5,10) -- (6,10) -- (6,0) --  (5,0);
\draw[->] (-0.5,3.9) -- (-1,3.9) -- (-1,1) -- (-0.5, 1);
\draw[->] (-0.5,4) -- (-1.5,4) -- (-1.5,0) -- (-0.5, 0);
\draw[->] (5,0.2) -- (6.5,0.2) -- (6.5,-0.2) -- (5, -0.2);
\draw[->] (5,1.2) -- (6.5,1.2) -- (6.5,0.8) -- (5, 0.8);
\draw[->] (-0.5,0.8) -- (-2,0.8) -- (-2,-0.2) -- (-0.5, -0.2);

\fill (-1,7) node[left] {$d=b_{n}$};
\fill (6,5) node[right] {$b_{s+1}+1, \dots, b_n-1$};
\fill (2,7.5) node[right] {$d_{m+1}=b_1$};
\fill (-1,2.5) node[left] {$b_{1}$};
\fill (-1.5,2) node[left] {$b_{n+1}+1, \dots , b_{1}-1$};
\fill (6.5,0) node[right] {$0, \dots , b_1-1$};
\fill (6.5,1) node[right] {$b_1$};
\fill (-2,0.3) node[left] {$b_2+1, \dots , b_1-1$};

\draw plot[mark=x, mark size=2pt] coordinates {(1,4)}; 
\draw plot[mark=x, mark size=2pt] coordinates {(1,1)};
\draw plot[mark=x, mark size=2pt] coordinates {(1,0)};

\draw plot[mark=x, mark size=2pt] coordinates {(2.5,10)};
\draw plot[mark=x, mark size=2pt] coordinates {(2.5,5)};
\draw plot[mark=x, mark size=2pt] coordinates {(2.5,4)}; 
\draw plot[mark=x, mark size=2pt] coordinates {(2.5,2)}; 
\draw plot[mark=x, mark size=2pt] coordinates {(2.5,1)};
\draw plot[mark=x, mark size=2pt] coordinates {(2.5,0)};
\end{tikzpicture}
    \caption{Realization of Hofbauer's Markov Diagram when $n,m$ are odd number}
    \label{Hofbauer tower}
\end{figure}

\textcolor{blue}{The Markov diagram is constructed level by level from the bottom for $m=0,1,2,\ldots$. In this process, there are two cases, namely Case I and Case II, as described below. For each case, we examine how the edges from the $m$-th floor are drawn.}

Case I: $m$ is even, or $m$ is odd and $\textcolor{red}{T_{-\beta}^{m}(\ell_{\beta}^+) \notin I_{b_1}}$.\\
In this case, we have
\begin{align}\label{R_m}
    R_{m}=
\begin{split}
\left\{
\begin{array}{ll}
\displaystyle [\ell_\beta,T^{m}(\ell_{\beta}^{+})]
=\bigcup_{j=b_{m+1}+1}^{b_1}\overline{I_j} 
\cup [c_{b_{m+1}+1},T^m_{-\beta}(\ell_{\beta}^+)], 
\quad & m: odd, \\
[10pt]
\displaystyle [T^{m}_{-\beta}(\ell_{\beta}^{+}),r_\beta]
=[T_{-\beta}^m(\ell_\beta^+),c_{b_{m+1}}]
 \cup \bigcup_{j=0}^{b_{m+1}-1}\overline{I_j}, 
\quad & m: even.
\end{array} \right. 
\end{split}
\end{align}
Therefore, 
from the $m$-th floor 
there exists one uparrow to the $(m+1)$-th floor
labeled by $b_{m+1}$.
And there exist downarrows to the $0$-th floor
labeled by $0,1,\ldots,b_{m+1}-1$ 
when $m$ is even, or
downarrows to the $0$-th floor
labeled by $b_{m+1}+1, b_{m+1}+2,\ldots, b_1 -1$  and
one downarrow to the 1st floor
labeled by $b_1$ 
when $m$ is odd and
$T_{-\beta}^{m}(\ell_{\beta}^+) \notin I_{b_1}$.

Case II: $m$ is odd and $T_{-\beta}^{m}(\ell_{\beta}^+) \in I_{b_1}$.\\
In this case, 
\begin{align}\label{R_m+a}
    R_{m+a}=
    \begin{split}
\left\{
\begin{array}{ll}
\displaystyle [T^{m+a}_{-\beta}(\ell_{\beta}^{+}),T^{a}(\ell_{\beta}^{+})], \quad a:odd, \\
[10pt]
\displaystyle [T^{a}(\ell_{\beta}^{+}), T^{m+a}_{-\beta}(\ell_{\beta}^{+})], \quad a:even,
\end{array} \right. 
\end{split}
\end{align}
and 
$R_{m+a} \subset \overline{I_{b_a}}$
\textcolor{blue}{for some $a=0,1,\cdots$.
Since $T_{-\beta}$ is expanding, there exists the first index $a\ge 1$ that 
$R_{m+a}$ contains some of the partition points $\{c_1, c_2, \ldots,c_{b_1} \}$.
We denote this index $a$ by $n-1$.} Then $b_{m+1}=b_1, b_{m+2}=b_2, \ldots,b_{m+n-1}=b_{n-1}$ and
$b_{m+n} \ne b_{n}$ and

\begin{align}\label{R_m+n}
R_{m+(n-1)}=
    \begin{split}
\left\{
\begin{array}{ll}
\displaystyle 
[T^{n-1}_{-\beta}(\ell_{\beta}^+),c_{b_{n}}] 
\cup \bigcup_{j=b_{m+n}+1}^{b_{n}-1} \overline{I_j} 
\cup [c_{b_{m+n}+1},T^{m+n-1}_{-\beta}(\ell_{\beta}^+)], \quad & n: odd, \\
[13pt]
\displaystyle
 [T^{m+n-1}_{-\beta}(\ell_{\beta}^+),c_{b_{m+n}}]
\cup \bigcup_{j=b_{n}+1}^{b_{m+n}-1}\overline{I_j} 
\cup [c_{b_{n}+1},T^{n-1}_{-\beta}(\ell_{\beta}^+)], \quad & n: even.
\end{array} \right. 
\end{split}
\end{align}

Therefore, for $a \in \{0,1,\ldots, n-2\}$, there is no arrow from the $(m+a)$-th floor to any lower floor, and
there exists only one uparrow to the $(m+a+1)$-th floor labeled by $b_{m+a+1}$. 
From $(m+n-1)$-th floor, there exists one uparrow to the $(m+n)$-th floor labeled by $b_{m+n}$, and there exists downarrows to the $0$-th floor labeled by $b_{m+n}+1, b_{m+n}+2,  \ldots, b_{n}-1$
when $n$ is odd,
or
downarrows to the $0$-th floor labeled by $b_{n}+1, b_{n}+2, \ldots, b_{m+n}-1$, when $n$ is even. 
Note that $R_m \subset \overline{I_{b_1}}$ and
$R_{m+a} \subset R_a$ for $a \in \{1,2,\ldots,n-\textcolor{blue}{1}\}$. 
Since $[T^{n-1}_{-\beta}(\ell_{\beta}^+), c_{b_{n}}]\subset 
\textcolor{blue}{R_{n-1}}$ 
when $n$ is odd or $[c_{b_{n}+1}, T^{n-1}_{-\beta}(\ell_{\beta}^+)] \subset 
\textcolor{blue}{R_{n-1}}$ 
when $n$ is even, there is one downarrow to the $n$-th floor labeled by $b_{n}$ (In \Cref{Markov diagram} and \ref{Hofbauer tower}, we denote $s:=m+n-1$).
\textcolor{blue}{
Moreover, we can see $R_{n}=[\ell_\beta, T^{n}(\ell_\beta^+)]$ when $n$ is odd or $R_{n}=[T^{n}(\ell_\beta^+),r_\beta]$ when $n$ is even. 
For $a=1,2,\ldots,n-1$, 
$R_n \ne R_{m+a}$, thus $n \le m$.}

Summarizing these discussions, Hofbauer's Markov Diagram is given by the following rule:

\begin{Th}[How to construct the Markov diagram]\label{How to construct the Markov diagram}
Put $d^*(\ell_{\beta},-\beta)=b_1b_2\dots .$
Let the collection of the cylinders $[0],[1],\ldots,[b_1-1]$ be the state $0$ and
$\sigma^{n-1}([b_1b_2\cdots b_n])$ the state $n$ for
$n \in \mathbb N$.
The edges of Hofbauer's Markov diagram are given by the following way:\\
\textcolor{blue}{There are self-loops and uparrows in the diagram as shown below.}
$$
\begin{array}{l}
0 \xrightarrow{i} 0 \quad \quad \quad \mbox{ for } i \in\{0,1,\dots,b_1-1\},\\
[10pt]
1 \xrightarrow{b_1} 1,\\
[10pt]
m \xrightarrow{b_{m+1}} m+1 \quad \quad \mbox{ for } m \in \mathbb N \cup \{0\}.
\end{array}
$$

We draw downarrows by the following infinite algorithm 
starting from the state $m\mapsto 1$.
\medskip

\textcolor{red}{
$(*)$ Unless $b_{m+1}=b_1$ with some odd number $m$, draw downarrows as follows:} 
$$
\begin{array}{ll}
m \xrightarrow{i} 0 \quad \quad \mbox{ for } i \in
\{0,1,\dots,b_{m+1}-1\}  & m : even,\\
[10pt]
\begin{aligned}
&m \xrightarrow{i} 0  \quad \quad \mbox{ for } i \in 
\{ b_{m+1}+1, b_{m+1}+2, \ldots,b_1-1\},\\
&m \xrightarrow{b_1} 1
\end{aligned} 
\qquad 
& m: odd.
\end{array}
$$
\textcolor{red}{Then we move to the next state $m\mapsto m+1$ and return to $(*)$.}
\medskip

\textcolor{blue}{
In the case where 
$b_{m+1}=b_1, b_{m+2}=b_2, \ldots,b_{m+n-1}=b_{n-1}$ and
$b_{m+n} \ne b_{n} $ with some odd number $m$ and 
$n \le m$, 
}
Draw downarrows from the state $s:=m+n-1$ as follows:
$$
s \xrightarrow{b_{n}} n,
$$
and
$$
\begin{array}{ll}
s \xrightarrow{i} 0 \quad \quad \mbox{ for } i \in
\{b_{n}+1, b_{n}+2, \ldots, b_{s+1}-1\}  & n,s : even,\\
[10pt]
s \xrightarrow{i} 0  \quad \quad \mbox{ for } i \in 
\{b_{s+1}+1, b_{s+1}+2, \ldots, b_{n}-1\}
& n,s: odd.
\end{array}
$$
\textcolor{red}{Then we move to the next state $m\mapsto s+1$ and return to $(*)$.}
\end{Th}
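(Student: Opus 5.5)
The plan is to prove the theorem by induction on the floors of the tower realization, using $R_0=[\ell_\beta,r_\beta]$ and $R_n=\overline{\Psi^{-1}(\sigma^{n-1}([b_1\cdots b_n]))}$. The invariant maintained along the infinite algorithm is this: whenever the algorithm arrives at a state $m$ at step $(*)$, the interval $R_m$ has the ``full-sided'' form
\[
R_m=[\ell_\beta,T_{-\beta}^m(\ell_\beta^+)]\ (m \text{ odd}), \qquad R_m=[T_{-\beta}^m(\ell_\beta^+),r_\beta]\ (m\text{ even}).
\]
That is, one endpoint of $R_m$ is an endpoint of $I_\beta$. The base cases $m=0,1,2$ are the computations already carried out before the statement.

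\textbf{Edges from a floor.} The elementary tool is that for a state $C$ with $\overline{\Psi^{-1}(C)}=R$, the edges are $C\xrightarrow{j}\sigma(C\cap[j])$ for each $j$ with $R\cap \mathrm{Int}(I_j)\neq\emptyset$. The interval underlying the target is $T_{-\beta}(R\cap \overline{I_j})$. Since $T_{-\beta}$ is orientation reversing with slope $-\beta$ and maps each full $\overline{I_j}$ onto $[\ell_\beta,r_\beta]$, three cases arise. A full branch gives an arrow to state $0$. The partial branch containing the moving endpoint $T^m_{-\beta}(\ell_\beta^+)$ gives the uparrow labelled $b_{m+1}$ to $R_{m+1}$, by definition of $b_{m+1}$. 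When $m$ is odd, the branch at $\ell_\beta$ is always $I_{b_1}$, and its image is $[\ell_\beta,T_{-\beta}(\ell_\beta^+)]=R_1$, giving the arrow labelled $b_1$ to state $1$.

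\textbf{Case I.} This is exactly the computation (\ref{R_m}). If $m$ is odd and $b_{m+1}\neq b_1$, the interval $R_m$ contains a full $I_{b_1}$, and the arrows are as listed in $(*)$. If $m$ is even, the labels $0,\dots,b_{m+1}-1$ go to $0$. In both cases the image under the uparrow again has an endpoint of $I_\beta$. Concretely, $T_{-\beta}$ sends $\ell_\beta$ or $c_{j}$ to $r_\beta$ or $\ell_\beta$ with alternating parity, so the invariant holds at $m+1$.

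\textbf{Case II.} Here $m$ is odd and $b_{m+1}=b_1$, so $R_m\subset\overline{I_{b_1}}$ and there are no downarrows from $m$. The key claim is $R_{m+a}\subset R_a$ with the explicit form (\ref{R_m+a}). One endpoint of $R_{m+a}$ is $T^a_{-\beta}(\ell_\beta^+)$, because the image of $\ell_\beta$ tracks the orbit of $\ell_\beta^+$. The other endpoint is $T^{m+a}_{-\beta}(\ell_\beta^+)$. This follows by induction on $a$ as long as $R_{m+a}$ lies inside a single $\overline{I_{b_{m+a+1}}}$. By expansivity the length grows by a factor $\beta$ at each step, so there is a first index $n-1$ at which the interval meets a partition point. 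At that index the two endpoints lie in branches $b_n\neq b_{m+n}$, and the ordering of these labels is governed by the parity of $n$ via $\prec$. Reading off the branches as in (\ref{R_m+n}) gives:
\begin{itemize}
\item the arrows to $0$ for the full middle branches;
\item the uparrow $b_{s+1}=b_{m+n}$;
\item the arrow labelled $b_n$, whose image is $T_{-\beta}$ applied to the part of $R_{n-1}$ adjacent to $T^{n-1}_{-\beta}(\ell_\beta^+)$.
\end{itemize}
The last image equals $R_n$, which is full-sided by the invariant (indeed $n\le m$, since $R_n$ was already constructed). Hence the state is $n$ and not a new one. Finally, $R_{s+1}$ has an endpoint $T_{-\beta}(c_j)\in\{\ell_\beta,r_\beta\}$, so the invariant is restored at $s+1$ and the algorithm resumes at $(*)$.

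\textbf{Main obstacle.} The delicate step is the bookkeeping in Case II, for two reasons. First, I must show that $n\le m$, and that the identifications of states are genuine equalities of sets $\sigma^{k}(\cdot)$ rather than merely equalities of closures. This is where right-limits $\ell_\beta^+$ and Lemma \ref{OddCycle} (using $d^*$ rather than $d$) enter, because they avoid degenerate coincidences at odd cycles. Second, I must check that the parity of $s=m+n-1$ matches that of $n$, which holds since $m$ is odd, so that the orientation conventions in the two displayed cases agree.
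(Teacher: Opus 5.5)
Your outline follows the paper's own argument: tower floors $R_n$, Case I versus Case II, the nesting $R_{m+a}\subset R_a$ while $R_{m+a}$ stays in one branch, the first crossing index $n-1$ read off as in (\ref{R_m+n}), and a full-sided floor again at $s+1$. The genuine gap is the inequality $n\le m$, which you leave unproved. The theorem describes Case II only under that proviso, so you must exclude $n>m$. Your parenthetical reason, that $R_n$ was already constructed, gives nothing, since $n\le s=m+n-1$ holds trivially. The paper excludes $n>m$ by noting $R_n\neq R_{m+a}$ for $1\le a\le n-1$: $R_n$ touches $\ell_\beta$ or $r_\beta$, and those floors do not. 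Alternatively, in Remark \ref{Exception}, $n>m$ would give $b_{m+a}=b_a$ for $1\le a\le m$, hence pure periodicity of odd period $m$, which Lemma \ref{OddCycle} rules out for $d^*(\ell_\beta,-\beta)$. With your own ingredients it is even shorter. If $m\le n-1$, the nesting yields $R_{2m}\subset R_m$. But $R_m,\dots,R_{2m-1}$ each lie in one closed branch, so $|R_{2m}|=\beta^m|R_m|>|R_m|$, a contradiction.

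Two smaller repairs are also needed. First, to identify the target of the $b_n$-arrow you invoke that $R_n$ is full-sided by the invariant. Your invariant is only asserted at states reached in $(*)$, and you have not shown $n$ is one; it is also unnecessary. What you need is this: $R_{m+n-1}\subset R_{n-1}$, both have $T_{-\beta}^{n-1}(\ell_\beta^+)$ as an endpoint on the same side, and $R_{m+n-1}$ reaches past the partition point bounding $I_{b_n}$. Hence the two $b_n$-pieces coincide, and their image is $R_n$ by the definition of the uparrow $n-1\xrightarrow{b_n}n$; full-sidedness of $R_n$ then follows. Second, your definition $R_n=\overline{\Psi^{-1}(\sigma^{n-1}([b_1\cdots b_n]))}$ is off by one. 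It is a subinterval of $\overline{I_{b_n}}$ (e.g.\ $R_1=\overline{I_{b_1}}$), so the stated invariant is false for it. Everything you actually do, including the edge rule $C\xrightarrow{j}\sigma(C\cap[j])$, uses the paper's $R_n=\overline{\Psi^{-1}(\sigma^{n}([b_1\cdots b_n]))}$. That is the image of state $n$, which is what determines outgoing edges under Hofbauer's rule $D=\sigma(C)\cap[i]$.
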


\textcolor{red}{
\begin{Rem}
\label{Exception}
In the last part of the above proof of \Cref{How to construct the Markov diagram}, we 
could directly see that the case where $m$ is odd and $n>m$ cannot occur, since the corresponding 
interval would be away from $\ell_{\beta}$
and $r_{\beta}$ and leads to a contradiction, if such a case exists. 
This can be shown in the following way as well.
Since $b_{m+a}=b_a$ for $a=1,2,\dots,m$,
by applying the same discussion as the last part of the proof of Lemma \ref{OddCycle}, the expansion is purely periodic of
period $(b_1\dots b_m)^{\infty}$.
But $d^*(\ell_{\beta},-\beta)$ can not be in this form
by Lemma \ref{OddCycle}.
See \Cref{Example:x^3-3x^2+x-1} for the construction of
the exceptional cases for (\ref{OddPeriodcase}).
\end{Rem}
}

We denote the countable incidence matrix of the Markov diagram by $A_{-\beta}$.
\textcolor{red}{
In the Markov diagram in \Cref{How to construct the Markov diagram}, 
we call $i-j+1$ the {\bf defect} of a downarrow $i\rightarrow j$ with $j\le i$.}

\begin{Cor}\label{Defect}
The defects of the arrows $i\rightarrow j$ with $0<j\le i$ are distinct positive integers.
\end{Cor}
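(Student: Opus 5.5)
From \Cref{How to construct the Markov diagram}, the downarrows $i\to j$ with $j\ge 1$ are of only two kinds. The first kind is $m\xrightarrow{b_1}1$, drawn in step $(*)$ for odd $m$ (including the self-loop $1\xrightarrow{b_1}1$). The second kind is $s\xrightarrow{b_n}n$, drawn at the end of a ``return block'' that starts at an odd $m$ with $b_{m+1}=b_1$, so that $s=m+n-1$. All other downarrows go to $0$ and are excluded. The plan is to compute the defect of each kind, show each defect is positive, and show no two coincide, by tracking how the infinite algorithm scans the states.

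\emph{Step 1: compute defects.} For $m\xrightarrow{b_1}1$ the defect is $m-1+1=m$. For $s\xrightarrow{b_n}n$ with $s=m+n-1$ the defect is $s-n+1=m$. So every arrow to a nonzero state has defect equal to the odd index $m$ at which it was ``triggered'' in the algorithm: either the state $m$ itself in step $(*)$, or the starting point $m$ of a return block. Positivity is immediate because $m\ge1$. Also $n\le m$ (shown before the theorem, cf.\ \Cref{Exception}) gives $j=n\le s$, so these really are downarrows or loops.

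\emph{Step 2: distinctness.} I will show that the triggering indices $m$ are pairwise distinct, and that each trigger produces exactly one arrow to a nonzero state. The algorithm visits the states in increasing order via the pointer $m\mapsto m+1$ or $m\mapsto s+1$. At each pointer position it either executes $(*)$ at $m$, producing exactly one arrow $m\to1$ when $m$ is odd and none to a nonzero state when $m$ is even, or it opens a block at the odd position $m$. Such a block draws nothing from the states $m,\dots,s-1$, draws exactly one nonzero-target arrow from $s$, and then jumps the pointer to $s+1>m$. Hence each pointer position yields at most one nonzero-target arrow, with defect equal to the pointer position. Since pointer positions strictly increase, the defects are distinct.

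\emph{Main obstacle.} The subtle point is to confirm that every state, and every arrow out of it, is accounted for exactly once. The states $m+a$ inside a block ($0\le a\le n-2$) have no downarrows at all, as established in Case II of the preceding discussion. In particular, $m$ itself gets no arrow $m\to1$ even though it is odd, and this is what prevents the defect $m$ from being used twice. Each block is also finite, since the first index $a$ with $b_{m+a}\neq b_a$ exists by the expansion argument in Case II together with \Cref{Exception}. I will verify this carefully against the tower description in \eqref{R_m+a} and \eqref{R_m+n}. Then the mapping from arrows to defects is injective into the odd positive integers.
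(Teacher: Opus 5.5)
Your proposal is correct and follows essentially the paper's argument: every arrow into a nonzero state has the form $s\xrightarrow{b_n}n$ with $s=m+n-1$, where the case $n=1$, $s=m$ covers the arrows $m\xrightarrow{b_1}1$ and the self-loop at $1$. Its defect $s-n+1$ is therefore the state $m$ where the algorithm currently sits, and these $m$ are distinct because the algorithm only moves forward; your pointer-monotonicity bookkeeping makes explicit what the paper's one-line proof leaves implicit (though your appeal to $n\le m$ is not needed for $n\le s$, since $s-n=m-1\ge 0$).
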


\begin{proof}
By \Cref{How to construct the Markov diagram}, such \textcolor{red}{a self-loop and} 
downarrows 
have a form $s \xrightarrow{b_{n}} n$ and $s-n+1=m$,
where $m$ is the state of the Markov diagram that we visit by the above algorithm.
\end{proof}

This property will be used in the proof of \Cref{SystemPositiveRecurrent}, 
which is the clue to the construction of our natural extension.




\section{Perron--Frobenius Theorem for Countable Markov Shifts}\label{Generalized Perron-Frobenius Theorem}
 
To prove the main theorem, we recall the Perron--Frobenius theorem described in
\cite[Chapter 7]{Kitchens98} associated with the countable infinite matrix $\Ab$ with non-negative entries.

We say such a matrix is {\it irreducible} if for every pair of indices $i$ and $j$, there is $n$ with $(\Ab^n)_{ij}>0$. Equivalently, the matrix is irreducible when its incidence graph is strongly connected.

Fix an index $i$ and let $p(i)=\gcd\{n\ge 1 \mid (\Ab^n)_{ii} >0\}$. This is the period of the index $i$. When $\Ab$ is {\it irreducible} and the period of every index is the same, that is called the period of $\Ab$.
As before, a matrix with period one is said to be {\it aperiodic}. Note that $(\Ab)_{00}>0$ implies $\Ab$ is {\it aperiodic}.

We will define three generating functions. For any pair of indices $i$ and $j$, we define
\begin{align*}
a_{ij}(0)=\delta_{ij}, \quad a_{ij}(1)=\Ab_{ij}, \quad a_{ij}(n)=(\Ab^n)_{ij}.
\end{align*}
The first generating function is defined by
\begin{align*}
H_{ij}(z)=\sum_{n=0}^{\infty}a_{ij}(n)z^n.
\end{align*}
For the next type of generating functions, we define coefficients inductively.
Let
\begin{align*}
\ell_{ij}(0)=0, \quad \ell_{ij}(1)=a_{ij}(1), \quad \ell_{ij}(n+1)=\sum_{r\not=i}\ell_{ir}(n)a_{rj}(1).
\end{align*}
The coefficient $\ell_{ij}(n)$ is the sum of the weights of the paths that go from $i$ to $j$ in $n$ steps without returning to $i$ at any time prior to $n$. Define the functions
\begin{align*}
L_{ij}(z)=\sum_{n=1}^{\infty}\ell_{ij}(n)z^n.
\end{align*}

Similarly, define coefficients
\begin{align*}
r_{ij}(0)=0, \quad r_{ij}(1)=a_{ij}(1), \quad r_{ij}(n+1)=\sum_{r\not=j}a_{ir}(1)r_{rj}(n).
\end{align*}
The coefficient $r_{ij}(n)$ represents the sum of the weights of the paths that go from $i$ to $j$ in $n$ steps without hitting $j$ at any time prior to $n$. Define the functions
\begin{align*}
R_{ij}(z)=\sum_{n=1}^{\infty}r_{ij}(n)z^n.
\end{align*}

We define $\lambda$ to be the Perron value of $\Ab$. The irreducibility of $\Ab$ implies $\displaystyle \lambda=\lim_{n\rightarrow \infty}\sqrt[n]{(\Ab^n)_{ij}}$ (c.f. \cite[Lemma 7.1.1]{Kitchens98}).
The matrix $\Ab$ is {\it recurrent} if $H_{ii}(1/\lambda)=\infty$. The recurrent matrix $\Ab$ is said to be {\it positive recurrent} if 
\begin{align*}
\sum_{n=1}^{\infty}n\frac{\ell_{ii}(n)}{\lambda^n}<\infty, 
\end{align*}
and it is said to be {\it null recurrent} if $\sum_{n=1}^{\infty}n\ell_{ii}(n)/\lambda^n=\infty$.

\begin{Prop}[Generalized Perron-Frobenius Theorem, {\cite[Theorem 7.1.3]{Kitchens98}}]\label{GPFT}
Suppose $\Ab$ is a countable nonnegative matrix. Further, suppose it is irreducible, aperiodic, and recurrent. Then there exists a finite Perron value $\lambda>0$ such that:\\
[5pt]
$(a)$ $\displaystyle \lambda=\lim_{n\rightarrow \infty}\sqrt[n]{(\Ab^n)_{ij}}$ for any pair of indices $i$ and $j$, so that $1/\lambda$ is the radius
of convergence of the power series $\displaystyle H_{ij}(z)=\sum_{n=0}^{\infty}a_{ij}(n)z^n$,\\
[5pt]
$(b)$ $\lambda$ has strictly positive left and right eigenvectors,\\
[5pt]
$(c)$ the eigenvectors are unique up to constant multiples,\\
[5pt]
$(d)$ let $\ell, r$ be  the left and right eigenvectors for $\lambda$ then $\ell\cdot r<\infty$ if and only if $\Ab$ is positive recurrent,\\
[5pt]
$(e)$ if $0\le S\le \Ab$ and $\beta$ is the Perron value for $S$ then $\beta\le\lambda$, if $S$ is recurrent then there is equality if and only if $S=\Ab$,\\
[5pt]
$(f)$ $\displaystyle \lim_{n\rightarrow \infty}\Ab^n/\lambda^n={\bm 0}$ if $\Ab$ is null recurrent, and $\displaystyle \lim_{n\rightarrow \infty}\Ab^n/\lambda^n=r \ell$, normalized
so that $\ell \cdot r=1$, if $\Ab$ is positive recurrent.
\end{Prop}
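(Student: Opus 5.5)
Since this is the classical generalized Perron--Frobenius theorem of Vere-Jones, as presented in \cite[Chapter 7]{Kitchens98}, I would follow the generating-function approach built on the quantities $H_{ij}$, $L_{ij}$, $R_{ij}$ introduced above. Throughout, I fix a reference index $i$ (for us, the state $0$).

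\textbf{Step (a).} Concatenating loops gives the supermultiplicativity $a_{ii}(n+m)\ge a_{ii}(n)a_{ii}(m)$. Fekete's lemma then shows that $\lambda=\lim\sqrt[n]{a_{ii}(n)}$ exists along the set of $n$ with $a_{ii}(n)>0$. Aperiodicity makes this set contain all sufficiently large $n$, so the limit is a genuine limit. Irreducibility gives $a_{ij}(n+p+q)\ge a_{ji}(p)\,a_{ii}(n)\,a_{ij}(q)$ and the symmetric inequality, so the same $\lambda$ works for every pair $(i,j)$. Finiteness and positivity of $\lambda$ are part of the hypotheses, as in the statement.

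\textbf{Steps (b) and (c).} I would use the renewal identities obtained by splitting a path at its first or last visit to $i$:
\[
H_{ii}(z)=\frac{1}{1-L_{ii}(z)},\qquad H_{ij}(z)=H_{ii}(z)L_{ij}(z),\qquad H_{ji}(z)=R_{ji}(z)H_{ii}(z).
\]
Recurrence, i.e.\ $H_{ii}(1/\lambda)=\infty$, is then equivalent to $L_{ii}(1/\lambda)=1$. I would define
\[
u_j:=L_{ij}(1/\lambda),\qquad v_j:=R_{ji}(1/\lambda),\qquad u_i=v_i=1.
\]
Finiteness follows from $L_{ij}(z)\,a_{ji}(k)z^k\le L_{ii}(z)\le 1$ (a path from $i$ to $j$ avoiding $i$, followed by a path back to $i$, is part of a loop at $i$), and the $R$ case is symmetric. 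Positivity comes from irreducibility.

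The eigen-equations come from a first-step (respectively last-step) decomposition. For $j\neq i$ one has $\sum_r u_r\Ab_{rj}=\lambda u_j$ directly, and for $j=i$ the relation $L_{ii}(1/\lambda)=1$ supplies exactly the missing term.

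For uniqueness, take any nonnegative $w$ with $w\Ab=\lambda w$ and $w_i=1$. Iterating the equation while discarding the paths that revisit $i$ gives $w\ge u$. Then $w-u$ is again a nonnegative $\lambda$-eigenvector vanishing at $i$, and irreducibility forces $w=u$. The right eigenvector is handled in the same way. This minimality argument is the step where recurrence is really used.

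\textbf{Steps (d) and (e).} For (d), note that $\sum_j L_{ij}(z)R_{ji}(z)$ counts first-return loops at $i$ together with a marked intermediate time. Since every time step lies on exactly one such split, this yields
\[
u\cdot v=\sum_{n\ge1} n\,\ell_{ii}(n)\lambda^{-n},
\]
which is finite exactly in the positive recurrent case.

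For (e), $S\le\Ab$ gives $s_{ii}(n)\le a_{ii}(n)$, hence $\beta\le\lambda$. If $S$ is recurrent and $\beta=\lambda$, then $L^S_{ii}(1/\lambda)=1=L^{\Ab}_{ii}(1/\lambda)$ with termwise domination, so every path weight must agree and $S=\Ab$.

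\textbf{Step (f), the main obstacle.} Normalize $f_n=\ell_{ii}(n)\lambda^{-n}$. By recurrence this is a probability distribution, aperiodic by hypothesis, and $a_{ii}(n)\lambda^{-n}$ is the associated renewal sequence. The Erd\H{o}s--Feller--Pollard renewal theorem gives
\[
\frac{a_{ii}(n)}{\lambda^{n}}\to\frac{1}{\sum_{n} n f_n}=\frac{1}{u\cdot v},
\]
which is $0$ in the null recurrent case. To pass to general $(j,k)$, I would decompose $a_{jk}(n)$ according to the first and last visits to $i$ and apply dominated convergence, using the summability of $R_{ji}(1/\lambda)$ and $L_{ik}(1/\lambda)$. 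This gives the limit $v_j u_k/(u\cdot v)$, i.e.\ $r\ell$ after normalizing $\ell\cdot r=1$. The delicate point is justifying the dominated convergence, since the tails of the first-passage series must be controlled uniformly in $n$.
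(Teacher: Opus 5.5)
The paper gives no proof of this proposition; it imports Theorem 7.1.3 of \cite{Kitchens98}. Your sketch is essentially the standard generating-function proof from that source: the renewal identities, $L_{ij}(1/\lambda)$ and $R_{ji}(1/\lambda)$ as the minimal eigenvectors, $u\cdot v=\mu(i)$ as in the quoted Lemma 7.1.21, and the Erd\H{o}s--Feller--Pollard theorem for (f). So the route is correct and is the same one. A few small slips need repair:

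\begin{enumerate}
\item In (a), the concatenation $j\to i\to i\to j$ bounds $a_{jj}(p+n+q)$, not $a_{ij}(p+n+q)$.
\item The finiteness bound for $u_j$ must use a first-passage path ($r_{ji}(k)>0$ for some $k$) rather than $a_{ji}(k)$. A general $j\to i$ path may revisit $i$, and then the concatenation is not a first-return loop.
\item In (f), the first/last-visit decomposition misses the $j\to k$ paths that never visit $i$. Prefix a fixed $i\to j$ path avoiding $i$ and append a fixed first-passage $k\to i$ path; this embeds them into first-return loops, so their weighted series at $1/\lambda$ converges and their contribution vanishes.
\end{enumerate}

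Also, the dominated convergence you call delicate is immediate: $a_{ii}(m)\le\lambda^m$, which your Fekete argument already gives.
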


From here, we list up the lemmas (including the lemmas of the Generalized Perron-Frobenius Theorem) necessary to prove the main theorem.

\begin{Lem}[{\cite[Lemma 7.1.6 (iv)]{Kitchens98}}]\label{Lemma 7.1.6 (iv)}
Suppose $\Ab$ is a countable, nonnegative matrix. Further, suppose it is irreducible and aperiodic. Then
\begin{align*}
H_{ii}(z)=\frac{1}{1-L_{ii}(z)}=\frac{1}{1-R_{ii}(z)}, \quad |z|<\frac{1}{\lambda}.
\end{align*}
\end{Lem}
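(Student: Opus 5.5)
The statement is Lemma~\ref{Lemma 7.1.6 (iv)}: $H_{ii}(z)=1/(1-L_{ii}(z))=1/(1-R_{ii}(z))$ for $|z|<1/\lambda$. The plan is to prove it by the standard first-return decomposition of paths, done at the level of power series coefficients, and then pass to analytic functions inside the radius of convergence.

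\textbf{Step 1 (coefficient identity).} Fix $i$ and $n\ge1$. We decompose any weighted path of length $n$ from $i$ to $i$ according to its \emph{first} return time $k\in\{1,\dots,n\}$ to $i$. By the definition of $\ell_{ii}(k)$, the weights of paths from $i$ to $i$ in $k$ steps that avoid $i$ at intermediate times sum to $\ell_{ii}(k)$. The remaining segment is an arbitrary path of length $n-k$ from $i$ to $i$, with weight sum $a_{ii}(n-k)$. Hence
\[
a_{ii}(n)=\sum_{k=1}^{n}\ell_{ii}(k)\,a_{ii}(n-k),\qquad n\ge1 .
\]
To verify this cleanly, we expand $(\Ab^n)_{ii}=\sum a_{ir_1}(1)a_{r_1r_2}(1)\cdots a_{r_{n-1}i}(1)$ and split the sum according to the smallest index $k$ with $r_k=i$ (with $r_n:=i$). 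Unwinding the recursion $\ell_{ij}(n+1)=\sum_{r\ne i}\ell_{ir}(n)a_{rj}(1)$ shows that $\ell_{ii}(k)$ is exactly the sum over sequences with $r_1,\dots,r_{k-1}\ne i$. All terms are nonnegative, so rearranging these countable sums is justified even though the matrix is infinite.

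\textbf{Step 2 (last-visit version).} For $R_{ii}$ we instead decompose by the \emph{last} visit to $i$ before time $n$, giving $a_{ii}(n)=\sum_{k=1}^{n}a_{ii}(n-k)\,r_{ii}(k)$. Unwinding $r_{ij}(n+1)=\sum_{r\ne j}a_{ir}(1)r_{rj}(n)$ identifies $r_{ii}(k)$ as the weight of $k$-step paths from $i$ to $i$ avoiding $i$ at intermediate times. In fact this is the same set of paths as before, so $\ell_{ii}(k)=r_{ii}(k)$, and the second equality follows immediately.

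\textbf{Step 3 (generating functions).} Multiplying by $z^n$, summing, and using $a_{ii}(0)=1$ gives the formal identity $H_{ii}(z)=1+L_{ii}(z)H_{ii}(z)$. For $0\le z<1/\lambda$, $H_{ii}(z)$ converges by Proposition~\ref{GPFT}(a), or directly from $\lambda=\lim\sqrt[n]{a_{ii}(n)}$ without recurrence, which gives radius $1/\lambda$. Since $0\le \ell_{ii}(n)\le a_{ii}(n)$, the series $L_{ii}$ also converges absolutely for $|z|<1/\lambda$, so the Cauchy product is legitimate there. The only real point, and the main obstacle, is to show $1-L_{ii}(z)\ne0$ for $|z|<1/\lambda$ so that we can divide. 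For real $0\le z<1/\lambda$ we have $L_{ii}(z)H_{ii}(z)=H_{ii}(z)-1<H_{ii}(z)<\infty$, hence $L_{ii}(z)<1$. For complex $z$ in the disk, $|L_{ii}(z)|\le L_{ii}(|z|)<1$. Therefore $H_{ii}(z)=1/(1-L_{ii}(z))$ on $|z|<1/\lambda$, and the same holds with $R_{ii}=L_{ii}$. Irreducibility and aperiodicity are used only to guarantee that $\lambda$ is well defined as the common growth rate.
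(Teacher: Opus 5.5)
Your proof is correct and is the standard first-return (renewal) argument: the decomposition $a_{ii}(n)=\sum_{k=1}^{n}\ell_{ii}(k)\,a_{ii}(n-k)$, the identification $\ell_{ii}=r_{ii}$, and the bound $|L_{ii}(z)|\le L_{ii}(|z|)=1-1/H_{ii}(|z|)<1$ are how the result is obtained in Kitchens; the paper gives no proof of its own and only cites \cite[Lemma 7.1.6 (iv)]{Kitchens98}. One point to tidy: Proposition~\ref{GPFT}(a) assumes recurrence, which is not a hypothesis of this lemma, so the convergence of $H_{ii}$ on $|z|<1/\lambda$ should rest on your second justification, namely $\lambda=\lim_n a_{ii}(n)^{1/n}$ from irreducibility.
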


\begin{Lem}[{\cite[Lemma 7.1.8]{Kitchens98}}]\label{Lemma 7.1.8}
Suppose $\Ab$ is a countable, nonnegative, irreducible, and aperiodic matrix. Then\\
$(i)$~ either every $H_{ij}(1/\lambda)$ is finite or every $H_{ij}(1/\lambda)$ is infinite, and \\
$(ii)$~ either every $L_{ii}(1/\lambda)$ is less than one or every $L_{ii}(1/\lambda)$ is equal to one.
\end{Lem}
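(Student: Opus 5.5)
The plan is to derive (i) from irreducibility by a path-concatenation comparison. Then (ii) follows from (i) through the renewal identity of \Cref{Lemma 7.1.6 (iv)}.

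For (i), fix two pairs of indices $(i,j)$ and $(k,l)$. By irreducibility there are $p,q\ge 0$ with $a_{ki}(p)>0$ and $a_{jl}(q)>0$. Concatenating a path $k\to i$ of length $p$, a path $i\to j$ of length $n$ and a path $j\to l$ of length $q$ gives
\[
a_{kl}(n+p+q)\ \ge\ a_{ki}(p)\,a_{ij}(n)\,a_{jl}(q)
\]
for every $n\ge 0$. Multiplying by $\lambda^{-(n+p+q)}$ and summing over $n$ yields
\[
H_{kl}(1/\lambda)\ \ge\ \frac{a_{ki}(p)\,a_{jl}(q)}{\lambda^{p+q}}\,H_{ij}(1/\lambda),
\]
and the constant in front is strictly positive. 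Hence if $H_{ij}(1/\lambda)=\infty$, then $H_{kl}(1/\lambda)=\infty$. Exchanging the roles of the two pairs gives the converse, so finiteness of $H_{ij}(1/\lambda)$ does not depend on the pair. All series have nonnegative terms, so they are well defined in $[0,\infty]$.

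For (ii), fix $i$. Since $1/\lambda$ is the radius of convergence of $H_{ii}$, we have $1\le H_{ii}(z)<\infty$ for $0\le z<1/\lambda$. \Cref{Lemma 7.1.6 (iv)} then gives $L_{ii}(z)=1-1/H_{ii}(z)<1$ on this range. Both $H_{ii}$ and $L_{ii}$ have nonnegative coefficients, so by monotone convergence their values at $1/\lambda$ are the limits as $z\nearrow 1/\lambda$. This yields $L_{ii}(1/\lambda)\le 1$ and, passing to the limit in the identity,
\[
H_{ii}(1/\lambda)=\frac{1}{1-L_{ii}(1/\lambda)},
\]
interpreted as $\infty$ when $L_{ii}(1/\lambda)=1$. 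Thus $L_{ii}(1/\lambda)=1$ if and only if $H_{ii}(1/\lambda)=\infty$. By (i) the latter holds either for every $i$ or for none, which proves (ii).

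The only delicate point is the boundary passage $z\nearrow 1/\lambda$. The identity of \Cref{Lemma 7.1.6 (iv)} is stated only for $|z|<1/\lambda$, and it must be extended to $z=1/\lambda$ with possibly infinite values. Nonnegativity of the coefficients together with monotone convergence (Abel's theorem for nonnegative series) handles this.
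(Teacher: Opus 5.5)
The paper gives no proof of this lemma (it is quoted from Kitchens), and your argument is a correct, self-contained proof along the standard lines: the concatenation bound $a_{kl}(n+p+q)\ge a_{ki}(p)\,a_{ij}(n)\,a_{jl}(q)$ yields (i), and (ii) follows from (i) by extending the renewal identity $H_{ii}(z)=1/(1-L_{ii}(z))$ to $z=1/\lambda$ via monotone convergence, exactly as you describe. Your argument tacitly assumes $\lambda<\infty$, so that $[0,1/\lambda)$ is nonempty; if $\lambda=\infty$, both parts are trivial because $H_{ij}(0)=\delta_{ij}$ and $L_{ii}(0)=0$.
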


We define the following vectors. For each $i, j$, define a row and column vector 
\begin{align*}
\ell^{(i)}=(L_{i0}(1/\lambda), L_{i1}(1/\lambda), \dots, L_{ij}(1/\lambda), \dots), \quad 
r^{(j)}=
\left(
\begin{array}{c}
 R_{0j}(1/\lambda) \\
 R_{1j}(1/\lambda) \\
 \vdots \\
 R_{ij}(1/\lambda) \\
  \vdots 
\end{array} 
\right).
\end{align*}

By the following Lemma, these vectors are exactly the left and right eigenvectors, respectively.

\begin{Lem}[{\cite[Lemma 7.1.9 (i)]{Kitchens98}}]\label{Lemma 7.1.9 (i)}
Let $\Ab$ be a countable, nonnegative, irreducible, aperiodic, and recurrent matrix. Then
\begin{align*}
    \ell^{(i)}\Ab=\lambda\ell^{(i)}, ~
    \Ab r^{(i)}=\lambda r^{(i)} \quad\text{for all $i$}.
\end{align*}
\end{Lem}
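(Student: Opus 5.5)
The plan is a first-passage decomposition of the generating functions $L_{ij}$ and $R_{ij}$, evaluated at the boundary point $z=1/\lambda$. I treat the left eigenvector $\ell^{(i)}$ in detail; the right eigenvector $r^{(i)}$ is the same argument with the order of multiplication reversed.

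First I turn the defining recursion into an identity of power series. Fix $i$. For every $j$, including $j=i$, we have $\ell_{ij}(n+1)=\sum_{r\neq i}\ell_{ir}(n)a_{rj}(1)$. Multiplying by $z^{n+1}$ and summing over $n\ge 0$, together with $\ell_{ij}(1)=\Ab_{ij}$, gives
\begin{equation*}
L_{ij}(z)=z\Ab_{ij}+z\sum_{r\neq i}L_{ir}(z)\Ab_{rj}
\end{equation*}
for $0\le z\le 1/\lambda$. All coefficients are nonnegative, so rearranging the double sum is justified by Tonelli's theorem. This holds even at $z=1/\lambda$, where both sides may a priori be $+\infty$, because each series is the monotone limit of its values as $z\nearrow 1/\lambda$.

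Second, I use recurrence to show $L_{ii}(1/\lambda)=1$. By \Cref{Lemma 7.1.6 (iv)}, $H_{ii}(z)=1/(1-L_{ii}(z))$ for $0\le z<1/\lambda$. Letting $z\nearrow 1/\lambda$ and using monotone convergence on both series, $H_{ii}(1/\lambda)=\infty$ forces $L_{ii}(1/\lambda)\ge 1$. The reverse inequality $L_{ii}(z)<1$ for $z<1/\lambda$ also follows from that identity, since $H_{ii}(z)$ is finite and positive there; hence $L_{ii}(1/\lambda)=1$.

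Now substitute this into the identity at $z=1/\lambda$. The term $z\Ab_{ij}$ equals $\frac1\lambda L_{ii}(1/\lambda)\Ab_{ij}$, which is exactly the missing $r=i$ summand. Therefore
\begin{equation*}
L_{ij}(1/\lambda)=\frac1\lambda\sum_{r}L_{ir}(1/\lambda)\Ab_{rj},
\end{equation*}
which is $\ell^{(i)}\Ab=\lambda\ell^{(i)}$.

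The main obstacle is showing that every entry $L_{ij}(1/\lambda)$ is finite, so that this is a genuine eigen-equation rather than $\infty=\infty$. For this I use irreducibility. Choose a path from $j$ to $i$ of length $k$ and weight $w>0$ that hits $i$ only at its final step; a shortest path from $j$ to $i$ has this property. Append this fixed path to each path counted by $\ell_{ij}(n)$. The result is a path from $i$ back to $i$ of length $n+k$ that does not visit $i$ in between. Distinct original paths give distinct concatenations, so $\ell_{ij}(n)\,w\le \ell_{ii}(n+k)$. Summing over $n$ yields $w\lambda^{-k}L_{ij}(1/\lambda)\le L_{ii}(1/\lambda)=1$, and hence $L_{ij}(1/\lambda)<\infty$. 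If $j=i$ the claim is immediate.

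For the right eigenvector, the recursion $r_{ij}(n+1)=\sum_{r\neq j}a_{ir}(1)r_{rj}(n)$ gives
\begin{equation*}
R_{ij}(z)=z\Ab_{ij}+z\sum_{r\neq j}\Ab_{ir}R_{rj}(z).
\end{equation*}
Recurrence gives $R_{jj}(1/\lambda)=1$, again via \Cref{Lemma 7.1.6 (iv)}. Finiteness follows by prepending a shortest path from $j$ to $i$. Together these yield $\Ab r^{(j)}=\lambda r^{(j)}$.
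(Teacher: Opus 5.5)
Your proof is correct: the first-passage recursion summed into $L_{ij}(z)=z\Ab_{ij}+z\sum_{r\neq i}L_{ir}(z)\Ab_{rj}$, the boundary value $L_{ii}(1/\lambda)=1$ (resp.\ $R_{jj}(1/\lambda)=1$) obtained from recurrence through $H_{ii}=1/(1-L_{ii})$ and monotone limits, and the concatenation bound $w\,\ell_{ij}(n)\le \ell_{ii}(n+k)$ for finiteness together give genuine eigen-equations. The paper gives no proof of its own and only cites Kitchens' Lemma 7.1.9(i); your argument is essentially the standard proof of that cited result.
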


\begin{Prop}[Finite Approximation Theorem, {\cite[Theorem 7.1.4]{Kitchens98}}]\label{FAT}
Let $A$ be a countable, infinite, non-negative matrix. Assume $A$ is irreducible, aperiodic, and recurrent.
\begin{enumerate}
    \item If $\lambda$ is the Perron value for $A$, 
    $$
    \lambda = \sup\{\lambda(A'): A' \mbox{ is a finite, irreducible and \textcolor{red}{aperiodic} submatrix of }A \},
    $$
    where $\lambda (A') \mbox{ is the Perron value of the finite matrix } A'$.
    \item Let $\ell$ and $r$ be the left and right eigenvectors of $A$
    for the Perron value $\lambda$, normalized so that for some index $i$,
    $\ell_i=r_i=1$. Let $\{A_n\}$ be an increasing family of finite irreducible, aperiodic submatrices of $A$ that converge to $A$. 
    Let $\ell^{(n)}$, $r^{(n)}$ be the left and right eigenvectors corresponding to their Perron values, normalized so that 
    $\ell_i^{(n)}=r_i^{(n)}=1$. Then $\lim \ell_j^{(n)}=\ell_j$ and
    $\lim r_j^{(n)}=r_j$ for all $j$ in the index set.
\end{enumerate}
\end{Prop}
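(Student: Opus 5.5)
The statement is Kitchens' finite approximation theorem, so the plan is to rebuild it from the first-return generating functions introduced above: $L_{ij}$, $R_{ij}$, together with \Cref{Lemma 7.1.6 (iv)}, \Cref{Lemma 7.1.8} and \Cref{Lemma 7.1.9 (i)}.

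\emph{Part 1.}
\begin{enumerate}
\item For the easy inequality, let $A'$ be any finite submatrix. Then $0\le (A'^n)_{jk}\le (A^n)_{jk}$ entrywise. Taking $n$-th roots and using \Cref{GPFT}(a) gives $\lambda(A')\le\lambda$.
\item For the reverse inequality, fix an index $i$. Since $A$ is recurrent, $H_{ii}(1/\lambda)=\infty$. By \Cref{Lemma 7.1.6 (iv)}, letting $z\nearrow 1/\lambda$, and by \Cref{Lemma 7.1.8}, this gives $L_{ii}(1/\lambda)=1$.
\item Let $\mu<\lambda$. The series $L_{ii}$ has nonnegative coefficients and is not a polynomial in a trivial way, so $L_{ii}(1/\mu)>1$, possibly $=\infty$.
\item Exhaust the index set by finite sets $F_N\ni i$. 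Let $A_N$ be the restriction of $A$ to the strongly connected component of $i$ inside $F_N$. For $N$ large this component contains a vertex with a self-loop reachable from $i$, so $A_N$ is irreducible and aperiodic; in our setting one can simply require $0\in F_N$, since $A_{00}>0$.
\item The first-return coefficients $\ell^{(N)}_{ii}(k)$ of $A_N$ count only loops inside $F_N$. They therefore increase monotonically in $N$ to $\ell_{ii}(k)$. By monotone convergence, $L^{(N)}_{ii}(1/\mu)>1$ for large $N$.
\item For a finite irreducible aperiodic matrix, the Perron value $\lambda_N$ is characterized by $L^{(N)}_{ii}(1/\lambda_N)=1$. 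This is the finite case of the same lemmas, where recurrence is automatic. Since $L^{(N)}_{ii}$ is increasing, we get $1/\lambda_N<1/\mu$, that is, $\lambda_N>\mu$. This proves the supremum formula.
\end{enumerate}

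\emph{Part 2.}
\begin{enumerate}
\item By \Cref{GPFT}(c) the eigenvectors are unique up to scaling. By \Cref{Lemma 7.1.9 (i)} we may therefore take $r_j=R_{ji}(1/\lambda)$ and $\ell_j=L_{ij}(1/\lambda)$. These satisfy $r_i=R_{ii}(1/\lambda)=1=L_{ii}(1/\lambda)=\ell_i$, so the normalization matches.
\item The same formulas hold for each finite $A_n$ with $\lambda_n$ in place of $\lambda$: $r^{(n)}_j=R^{(n)}_{ji}(1/\lambda_n)$ and $\ell^{(n)}_j=L^{(n)}_{ij}(1/\lambda_n)$.
\item Since the family $\{A_n\}$ increases, Part 1 (the easy inequality applied to $A_n\le A_{n+1}$) shows that $\lambda_n$ is nondecreasing, and Part 1 shows $\lambda_n\to\lambda$.
\item Each coefficient $r^{(n)}_{ji}(k)\lambda_n^{-k}$ is nondecreasing in $n$, because the path counts increase and $\lambda_n^{-k}$ decreases. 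Each coefficient converges to $r_{ji}(k)\lambda^{-k}$. Hence the wrong direction of monotonicity must be handled. I would instead bound $r^{(n)}_{ji}(k)\lambda_n^{-k}$ by a dominating summable sequence and use dominated convergence.
\item The natural dominating sequence comes from the finite limit $R_{ji}(1/\lambda)<\infty$, which is the positivity and finiteness of the eigenvector.
\end{enumerate}

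The main obstacle is step 4 of Part 2: the path counts increase in $n$ while $\lambda_n^{-k}$ decreases, so the terms are not monotone and the truncated weights $\lambda_n^{-k}$ exceed $\lambda^{-k}$. I expect the real difficulty to be obtaining uniform tail control. I would first try the following. Decompose a path from $j$ to $i$ at its last visit to $j$. This expresses $R^{(n)}_{ji}(1/\lambda_n)$ through $L^{(n)}$-type series at the same argument. Then use $L^{(n)}_{jj}(1/\lambda_n)=1\le L_{jj}(1/\lambda_n)$ to bound it by a fixed quantity computed in $A$. The same argument applies symmetrically to the left eigenvectors.
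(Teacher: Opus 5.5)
The paper does not prove this statement; it quotes it from Kitchens. So your proposal has to stand on its own.

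\textbf{Part 1.} This is essentially fine, with two small repairs.
\begin{itemize}
\item In step 4, aperiodicity of $A$ does not give a self-loop. Use instead finitely many loops through $i$ whose lengths have $\gcd$ equal to $1$; these lie in $F_N$ for large $N$.
\item In Part 2, convergence $\lambda_n\to\lambda$ for the \emph{given} family follows from your steps 5--6 applied to that family, not from the supremum formula itself.
\end{itemize}

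\textbf{The gap in Part 2.} The genuine gap is exactly where you located it. Termwise convergence $r^{(n)}_{ji}(k)\lambda_n^{-k}\to r_{ji}(k)\lambda^{-k}$ plus Fatou yields only $\liminf_n R^{(n)}_{ji}(1/\lambda_n)\ge R_{ji}(1/\lambda)$.

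Dominated convergence would need $\sum_k\sup_n r^{(n)}_{ji}(k)\lambda_n^{-k}<\infty$. Finiteness of $R_{ji}(1/\lambda)$ does not give this, because $\lambda_n^{-k}>\lambda^{-k}$. Such a majorant need not exist at all when $A$ is null recurrent, which the hypotheses allow.

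Your suggested repair does not produce a majorant either. The inequality $L_{jj}(1/\lambda_n)\ge 1$ is a lower bound, not an upper bound. Quantities of $A$ evaluated at $1/\lambda_n>1/\lambda$ can be $+\infty$, so there is no ``fixed quantity computed in $A$''. Last-visit decompositions can at best give $\sup_n R^{(n)}_{ji}(1/\lambda_n)<\infty$, which bounds the sums, not the individual terms.

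\textbf{The missing idea.} Recurrence gives an \emph{exact} conservation law, $L^{(n)}_{ii}(1/\lambda_n)=1=L_{ii}(1/\lambda)$, and you should split it. Fix a path $\gamma$ from $i$ to $j\ne i$ of length $p$ and weight $w>0$ that does not revisit $i$ and lies in $A_n$ for $n\ge n_0$. Split the first-return loops at $i$ according to whether they begin with $\gamma$:
\[
1=w\lambda_n^{-p}R^{(n)}_{ji}(1/\lambda_n)+Q^{(n)},\qquad 1=w\lambda^{-p}R_{ji}(1/\lambda)+Q,
\]
where $Q^{(n)},Q\ge 0$ are the sums over the remaining loops. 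Fatou gives the $\liminf$ inequality for each piece separately. Since the totals coincide,
\[
\limsup_n w\lambda_n^{-p}R^{(n)}_{ji}(1/\lambda_n)=1-\liminf_n Q^{(n)}\le 1-Q=w\lambda^{-p}R_{ji}(1/\lambda),
\]
so $R^{(n)}_{ji}(1/\lambda_n)\to R_{ji}(1/\lambda)$. Loops ending with a fixed path from $j$ to $i$ give $L^{(n)}_{ij}(1/\lambda_n)\to L_{ij}(1/\lambda)$ in the same way.

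\textbf{An alternative via compactness.}
\begin{itemize}
\item The bound $r^{(n)}_j\le\lambda^p/(A_{n_0}^p)_{ij}$ gives uniform bounds in $n$.
\item Any pointwise subsequential limit $x$ is $\lambda$-subinvariant by Fatou, with $x_i=1$.
\item Iterating $x\ge\lambda^{-1}Ax$ shows $x\ge r$.
\item Then $x-r$ is subinvariant and vanishes at $i$, since $R_{ii}(1/\lambda)=1$; by irreducibility it vanishes everywhere.
\end{itemize}
In both versions recurrence is what controls the tail, and that is exactly the step your plan lacks.
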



In the $\beta$-expansion case, 
the countable incidence matrix of the Markov diagram
given in \Cref{Markov diagram for beta-transformation} 
is
\begin{align*}
\Ab=
\left(
\begin{array}{ccccc}
b_1 & 1 & & \\
b_2 &  & 1 &  \\
b_3 & & & \ddots\\
\vdots & &  & 
\end{array} 
\right),
\end{align*}
and the left and right eigenvectors corresponding to the eigenvalue $\beta$ can be written explicitly as  
\begin{align*}
\ell^{(0)}=(1, 1/\beta, 1/\beta^2, \dots, 1/\beta^i, \dots), \quad
r^{(0)}=
\left(
\begin{array}{c}
 1 \\
 T_{\beta}(1) \\
 \vdots \\
 T_{\beta}^i(1) \\
  \vdots 
\end{array} 
\right).
\end{align*}
\textcolor{blue}{
We see that the $i$-th entries of $\ell^{(0)}$ and 
$r^{(0)}$ give the width and height of the rectangle $\mathcal R_{\beta,i}$.
}

\begin{Lem}[{\cite[Lemma 7.1.13]{Kitchens98}}]\label{Lemma 7.1.13}
Either $L'_{ii}(1/\lambda)=\sum_{n=1}^{\infty}n\ell_{ii}(n)/\lambda^{n-1}$ is finite for all i and j, or it is infinite for all i and j.
\end{Lem}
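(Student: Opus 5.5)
We plan to work, as in the setting where this lemma is used, under the hypotheses of \Cref{GPFT}: $\Ab$ is irreducible, aperiodic and recurrent. Then $L_{ii}(1/\lambda)=1$ for every $i$ by \Cref{Lemma 7.1.6 (iv)} and \Cref{Lemma 7.1.8}. The idea is to express $L'_{ii}(1/\lambda)$ through the eigenvectors $\ell^{(i)}$, $r^{(i)}$ of \Cref{Lemma 7.1.9 (i)}, and then use uniqueness of the Perron eigenvectors to pass from one index to another.

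The first step is a path-splitting identity. Take a first-return loop at $i$ of length $n$, i.e.\ a path $i=v_0\to v_1\to\cdots\to v_n=i$ with $v_k\neq i$ for $0<k<n$. Cut it at a time $k\in\{1,\dots,n-1\}$, and put $j=v_k\neq i$. This gives a pair of paths:
\begin{itemize}
\item a path $i\to j$ of length $k$ that does not return to $i$, counted by $\ell_{ij}(k)$;
\item a path $j\to i$ of length $n-k$ that hits $i$ only at its end, counted by $r_{ji}(n-k)$.
\end{itemize}
Conversely, every such pair concatenates to a first-return loop at $i$. Since there are $n-1$ cut points, comparing weights gives
\begin{align*}
\sum_{j\neq i} L_{ij}(z)R_{ji}(z)=\sum_{n\ge 1}(n-1)\,\ell_{ii}(n)z^n .
\end{align*}
We add the $j=i$ term $L_{ii}(z)R_{ii}(z)$ and evaluate at $z=1/\lambda$, where $L_{ii}(1/\lambda)=R_{ii}(1/\lambda)=1$ by recurrence. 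This yields
\begin{align*}
\ell^{(i)}\cdot r^{(i)} = 1+\frac{1}{\lambda}L'_{ii}(1/\lambda)-1 = \frac{1}{\lambda}L'_{ii}(1/\lambda),
\end{align*}
as an identity in $[0,\infty]$; all terms are nonnegative, so Tonelli justifies the rearrangements. Hence $L'_{ii}(1/\lambda)<\infty$ if and only if $\ell^{(i)}\cdot r^{(i)}<\infty$.

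The second step compares indices. By \Cref{Lemma 7.1.9 (i)}, $\ell^{(i)}$ and $\ell^{(j)}$ are both positive left eigenvectors for $\lambda$, and $r^{(i)}$, $r^{(j)}$ are both positive right eigenvectors. By part (c) of \Cref{GPFT} there are constants $0<c,c'<\infty$ with $\ell^{(i)}=c\,\ell^{(j)}$ and $r^{(i)}=c'\,r^{(j)}$. Therefore $\ell^{(i)}\cdot r^{(i)}=cc'\,\ell^{(j)}\cdot r^{(j)}$, so one product is finite exactly when the other is. Combined with the first step, $L'_{ii}(1/\lambda)$ is finite for one index exactly when it is finite for all indices.

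The main obstacle is the bookkeeping in the splitting identity. One has to get the range of cut points right: $k=0$ and $k=n$ are excluded because $\ell_{ij}(0)=r_{ji}(0)=0$. One also has to handle the $j=i$ term separately, since it corresponds to the concatenation of two loops rather than a cut of one loop; this is where the normalisation $L_{ii}(1/\lambda)=R_{ii}(1/\lambda)=1$ is used. If the transient case must also be covered, I would instead try a direct comparison: a first-return loop at $j$ either avoids $i$ or decomposes through a fixed finite path between $i$ and $j$. I expect that case to need more care, but it is not needed for the positive recurrence argument that follows.
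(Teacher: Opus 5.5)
Your argument is correct in the recurrent setting, which is where positive recurrence is defined and where the lemma is used (via \Cref{Rho} in \Cref{SystemPositiveRecurrent}). It takes a different route from the paper. The paper gives no argument of its own: it imports the lemma from Kitchens, where the identity $\ell^{(i)}\cdot r^{(i)}=\mu(i)$ appears only later (\Cref{Lemma 7.1.21}) and only under the hypothesis of positive recurrence. Your path-splitting bijection proves that identity in $[0,\infty]$ for every recurrent $\Ab$. You then transfer finiteness between indices using uniqueness of the positive eigenvectors (part (c) of \Cref{GPFT}). This is not circular, because uniqueness holds for any recurrent matrix, whether positive or null recurrent.

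Your route buys more than the lemma:
\begin{itemize}
\item you get \Cref{Lemma 7.1.21} without its hypothesis;
\item you get part (d) of \Cref{GPFT} at no extra cost;
\item you get the explicit relation $\mu(i)=L_{ij}(1/\lambda)R_{ji}(1/\lambda)\,\mu(j)$, because $\ell^{(j)}_j=r^{(j)}_j=1$ forces $c=L_{ij}(1/\lambda)$ and $c'=R_{ji}(1/\lambda)$.
\end{itemize}
The price is dependence on the eigenvector machinery and on recurrence.

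A more elementary route avoids eigenvectors entirely. For nonnegative coefficients with $L_{ii}(1/\lambda)=1$, the difference quotient increases monotonically, so
$L'_{ii}(1/\lambda)=\lim_{z\uparrow 1/\lambda}\frac{1-L_{ii}(z)}{1/\lambda-z}=\lim_{z\uparrow 1/\lambda}\frac{1}{(1/\lambda-z)H_{ii}(z)}$.
The inequality $a_{jj}(n+p+q)\ge a_{ji}(p)a_{ii}(n)a_{ij}(q)$ gives $H_{jj}(z)\ge a_{ji}(p)a_{ij}(q)z^{p+q}H_{ii}(z)$. Hence $L'_{jj}(1/\lambda)\le \lambda^{p+q}L'_{ii}(1/\lambda)/(a_{ji}(p)a_{ij}(q))$, and symmetry finishes the proof.

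The same comparison, applied to $\sum_n n\,a_{ii}(n)\lambda^{-n}$, also settles the transient case you left open. There $H'_{ii}(1/\lambda)=L'_{ii}(1/\lambda)/(1-L_{ii}(1/\lambda))^2$ with a strictly positive denominator, so finiteness of $L'_{ii}(1/\lambda)$ is again a class property. That case plays no role in this paper, however.
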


We define the mean recurrence weight $\mu(i)$ as
\begin{align*}
\mu(i)=\frac{1}{\lambda}L'_{ii}(1/\lambda).
\end{align*}

\begin{Lem}[{\cite[Lemma 7.1.21]{Kitchens98}}]\label{Lemma 7.1.21}
If $\Ab$ is positive recurrent, then $\ell^{(i)}\cdot r^{(i)}=\mu(i)$.
\end{Lem}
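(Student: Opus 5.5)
Lemma \ref{Lemma 7.1.21} asserts $\ell^{(i)}\cdot r^{(i)}=\mu(i)$ for a positive recurrent $\Ab$. My plan is to compute both sides as power series in $z=1/\lambda$ and match them by decomposing paths at their visits to $i$. First, $\ell^{(i)}_i=L_{ii}(1/\lambda)=1$ by recurrence, using Lemma \ref{Lemma 7.1.8}(ii) and Lemma \ref{Lemma 7.1.6 (iv)} (since $H_{ii}(1/\lambda)=\infty$ forces $L_{ii}(1/\lambda)=1$). Likewise $R_{ii}(1/\lambda)=1$. The product is therefore
\[
\ell^{(i)}\cdot r^{(i)}=\sum_j L_{ij}(1/\lambda)R_{ji}(1/\lambda)=\sum_j\sum_{p,q\ge1}\frac{\ell_{ij}(p)\,r_{ji}(q)}{\lambda^{p+q}} ,
\]
a sum of nonnegative terms, so Tonelli lets me rearrange freely.

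The key combinatorial step is to interpret $\sum_j \ell_{ij}(p)r_{ji}(q)$. The factor $\ell_{ij}(p)$ counts weighted paths $i\to j$ of length $p$ that do not revisit $i$ before time $p$. The factor $r_{ji}(q)$ counts paths $j\to i$ of length $q$ that do not hit $i$ before time $q$. If $j\neq i$, concatenating gives a first-return loop at $i$ of length $n=p+q$ with a marked intermediate time $p\in\{1,\dots,n-1\}$. If $j=i$, the loop instead splits into two first-return loops. I will therefore handle $j=i$ separately: it contributes $L_{ii}(1/\lambda)R_{ii}(1/\lambda)=1$.

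For $j\ne i$, every first-return loop of length $n$ contributes exactly once for each cut point $p=1,\dots,n-1$, since its intermediate vertices all differ from $i$. Hence
\[
\sum_{j\neq i}\ell_{ij}(p)r_{ji}(n-p)=\ell_{ii}(n)\quad\text{for each }1\le p\le n-1 .
\]
Summing over $p$ and $n$ gives $\sum_n (n-1)\ell_{ii}(n)\lambda^{-n}$. Adding the $j=i$ term, which equals $1=\sum_n \ell_{ii}(n)\lambda^{-n}$, yields $\sum_n n\,\ell_{ii}(n)\lambda^{-n}=\frac1\lambda L'_{ii}(1/\lambda)=\mu(i)$.

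The main obstacle is the bookkeeping of the bijection. I need to check that the recursive definitions of $\ell_{ij}$ (avoiding $i$ at intermediate times) and $r_{ji}$ (avoiding $i$ at intermediate times) compose exactly into first-return loops, with weights multiplying. The boundary case $j=i$ must be accounted for precisely, or else an off-by-one in $n$ versus $n-1$ will appear. Positive recurrence is used only to guarantee that the final sum is finite. The identity itself holds in $[0,\infty]$ by nonnegativity.
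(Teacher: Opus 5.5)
The paper gives no proof of this lemma; it quotes it from Kitchens. Your argument is a correct, self-contained proof along the standard lines. Splitting each first-return loop at $i$ of length $n$ at one of its $n-1$ interior times gives $\sum_{j\neq i}\ell_{ij}(p)\,r_{ji}(n-p)=\ell_{ii}(n)$. The $j=i$ term $L_{ii}(1/\lambda)R_{ii}(1/\lambda)=1$ then supplies the missing $\sum_n \ell_{ii}(n)\lambda^{-n}$, so the total is $\sum_n n\,\ell_{ii}(n)\lambda^{-n}=\mu(i)$.

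Two small points to tidy. The renewal identity $H_{ii}(z)=1/(1-L_{ii}(z))$ is stated only for $|z|<1/\lambda$, so $L_{ii}(1/\lambda)=1$ needs the limit $z\uparrow 1/\lambda$ by monotone convergence. Also, $R_{ii}=L_{ii}$ is immediate, since $r_{ii}(n)=\ell_{ii}(n)$ both count first-return loops.
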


We define a new quantity $\varrho(i)$ as 
\begin{align*}
    \varrho(i)=\limsup{\sqrt[n]{\ell_{ii}(n)}}.
\end{align*}
Clearly, $\varrho(i)\le \lambda$. 

\begin{Lem}[{\cite[Lemma 7.1.25]{Kitchens98}}]\label{Rho}
If $\Ab$ is a matrix where $\varrho(i)<\lambda$ for some state $i$, then $\Ab$ is positive recurrent.
\end{Lem}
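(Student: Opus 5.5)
The plan is to compare the radii of convergence of the two generating functions $H_{ii}$ and $L_{ii}$ and use the renewal identity of Lemma~\ref{Lemma 7.1.6 (iv)}. Throughout, $\Ab$ is irreducible and aperiodic with finite Perron value $\lambda$, as in the setting of Proposition~\ref{GPFT}. The key observation is about radii. By the Cauchy--Hadamard formula, $L_{ii}(z)=\sum_n \ell_{ii}(n)z^n$ has radius of convergence $1/\varrho(i)$. By hypothesis, $1/\varrho(i)>1/\lambda$. So $L_{ii}$ is analytic on a disc strictly containing the closed disc $|z|\le 1/\lambda$. On the other hand, by Proposition~\ref{GPFT}(a) (equivalently \cite[Lemma 7.1.1]{Kitchens98}), $H_{ii}$ has radius of convergence exactly $1/\lambda$.

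\textbf{Step 1 (recurrence).} I will show that $L_{ii}(1/\lambda)=1$. The function $L_{ii}$ has nonnegative coefficients, so it is continuous and nondecreasing on $[0,1/\lambda]$, with $L_{ii}(0)=0$.
\begin{itemize}
\item Suppose $L_{ii}(1/\lambda)>1$. Then there is some $z_0\in(0,1/\lambda)$ with $L_{ii}(z_0)=1$. By Lemma~\ref{Lemma 7.1.6 (iv)}, $H_{ii}(z)=1/(1-L_{ii}(z))\to\infty$ as $z\nearrow z_0$. This is impossible, because $z_0$ lies strictly inside the disc of convergence of $H_{ii}$.
\item Suppose $L_{ii}(1/\lambda)<1$. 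Then $1/(1-L_{ii}(z))$ is analytic on a neighbourhood of $z=1/\lambda$. It agrees with $H_{ii}$ on $|z|<1/\lambda$. So $H_{ii}$ would extend analytically across the positive real boundary point $1/\lambda$. This contradicts Pringsheim's theorem: a power series with nonnegative coefficients has a singularity at the positive real point of its circle of convergence.
\end{itemize}
Hence $L_{ii}(1/\lambda)=1$. Then, letting $z\nearrow 1/\lambda$ in Lemma~\ref{Lemma 7.1.6 (iv)} and using monotone convergence, we get $H_{ii}(1/\lambda)=\infty$. Thus $\Ab$ is recurrent.

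\textbf{Step 2 (positive recurrence).} Since $1/\lambda$ lies strictly inside the disc of convergence of $L_{ii}$, the termwise differentiated series also converges there:
\[
L'_{ii}(1/\lambda)=\sum_{n\ge1} n\,\ell_{ii}(n)\lambda^{-(n-1)}<\infty .
\]
Therefore $\sum_n n\,\ell_{ii}(n)/\lambda^n<\infty$, which is exactly the definition of positive recurrence. By Lemma~\ref{Lemma 7.1.13} this holds for every index, not only for $i$.

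The main obstacle is Step~1. Positive recurrence is only defined for recurrent matrices, so we must first rule out transience. That requires the Pringsheim-type argument, which shows that the singularity of $H_{ii}$ at $1/\lambda$ can only come from the zero of $1-L_{ii}$. Step~2 is then immediate.
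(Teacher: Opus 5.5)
Your argument is correct and is essentially the standard proof of \cite[Lemma 7.1.25]{Kitchens98}, which the paper cites without reproducing: $\varrho(i)<\lambda$ puts $1/\lambda$ strictly inside the disc of convergence of $L_{ii}$, so the singularity of $H_{ii}=1/(1-L_{ii})$ at $1/\lambda$ forces $L_{ii}(1/\lambda)=1$ (recurrence), and then $L_{ii}'(1/\lambda)<\infty$ gives positive recurrence. One small fix: justify that $H_{ii}$ has radius of convergence exactly $1/\lambda$ via \cite[Lemma 7.1.1]{Kitchens98} or directly from the definition of $\lambda$, not via Proposition~\ref{GPFT}(a), because that proposition assumes the recurrence you are trying to prove.
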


\section{Natural Extension}\label{Natural extension}

By \Cref{Hofbauer tower and Markov Diagram}, the incidence matrix $A_{-\beta}$ of Hofbauer's Markov Diagram in \Cref{Markov diagram}, \ref{Hofbauer tower} is given by
\begin{align}\label{A_beta}
\Ab_{-\beta}=
\bordermatrix{ & 0 & 1 & 2 & \cdots & n & n+1 & \cdots & m+a+1 & \cdots &  s+1 & \cdots \cr
0 & b_1 & 1 & & & & & & & &\cr
1 & b_1-b_2-1 & 1 & 1 & & & & & & & &\cr
\vdots & & & & \ddots & & & & & & \cr
n & i & 0/1 & & & & 1 & & & & &\cr
\vdots & & & & & &  & \ddots & & & & \cr
m+a & & & & & & & & 1 & & & \cr 
\vdots & & & & & & & & & \ddots & & \cr
s & j & & & & 1 & & & & & 1 &\cr
\vdots & & & & & & & & & & &\ddots \cr}, 
\end{align}
\textcolor{blue}{
where the $(i, j)$ entry $a_{ij}$ of the matrix $A_{-\beta}$ is the number of the edges from the state $i$
to the state $j$ for $i,j\ge0$.}

\begin{Prop}\label{Prop: irreducible aperiodic and recurrent}
\textcolor{red}{For $\beta> (1+\sqrt{5})/2$}, $\Ab_{-\beta}$ is irreducible, aperiodic, and recurrent. Furthermore, the Perron value of $\Ab_{-\beta}$ is $\beta$.
\end{Prop}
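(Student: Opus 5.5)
We check the four properties of $\Ab_{-\beta}$ one at a time, reading each off the construction in \Cref{How to construct the Markov diagram}.

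\textbf{Aperiodicity.} This is immediate: $(\Ab_{-\beta})_{00}=b_1\ge 1$, so $\Ab_{-\beta}$ is aperiodic once it is irreducible.

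\textbf{Irreducibility.} Every state $m$ is reached from $0$ by the chain of uparrows $0\to1\to\cdots\to m$. So it suffices to show that every state can return to $0$.
\begin{itemize}
\item Take a state $m$ on which step $(*)$ of the algorithm acts. If $m$ is even and $b_{m+1}\ge1$, there is a downarrow to $0$. If $m$ is odd, there is always a downarrow $m\xrightarrow{b_1}1$, and then $1\xrightarrow{i}0$ whenever $b_2\le b_1-2$.
\item States inside a block $m,\dots,s$ of Case II move up until $s$, and then go down to $n\le m$ via $s\xrightarrow{b_n}n$. Iterating strictly decreases the height reached, so eventually we reach a state handled by $(*)$.
\end{itemize}
Here the hypothesis $\beta>(1+\sqrt5)/2$ enters. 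For $\beta$ at or below the golden mean, $d^*(\ell_\beta,-\beta)$ starts like $\eta=100111\cdots$ and the diagram splits: $\{1,2\}$ or similar becomes a closed class not reachable back to $0$. This is the known non-transitivity of $T_{-\beta}$ for small $\beta$. We therefore need a lemma: for $\beta>(1+\sqrt5)/2$, some state reachable from every state has a downarrow to $0$. I would prove it by comparing $d^*(\ell_\beta,-\beta)$ with $(10)^\infty$, the expansion at the golden mean. The assumption forces either $b_1\ge2$ (then $1\xrightarrow{b_2+1}0$ exists, since $b_2<b_1$ and $b_2+1\le b_1-1$ when $b_2\le b_1-2$, or else a later even state gives a downarrow), or $b_1=1$ with a suitable prefix. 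In either case we find an even $m$ with $b_{m+1}\ge1$ that is reached from state $1$ by uparrows. I expect this combinatorial case check to be the fiddly part.

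\textbf{Perron value.} Label-preserving paths in the diagram correspond to admissible words: by Hofbauer, $\Sigma^+_{T}=\Phi^+(\Sigma^+_{\mathcal D})$, and the representation is essentially one-to-one. Hence the number of paths of length $k$ starting at $0$ equals, up to a bounded factor, the number of admissible words of length $k$. The latter grows like $\beta^k$, since the topological entropy of $T_{-\beta}$ is $\log\beta$ and the lap number of $T_{-\beta}^k$ is comparable to $\beta^k$. So $\lim\sqrt[k]{(\Ab^k)_{00}}=\beta$.

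\textbf{Recurrence.} We must show $H_{00}(1/\beta)=\infty$. Equivalently, by \Cref{Lemma 7.1.6 (iv)} and \Cref{Lemma 7.1.8}, we must show $L_{00}(1/\beta)=1$. The approach is to exhibit a positive vector $r$ with $\Ab_{-\beta}r=\beta r$, given by $r_m=|R_m|$, the lengths of the tower intervals. The identity $\sum_j a_{mj}|R_j|=\beta|R_m|$ holds because $T_{-\beta}$ maps the pieces of $R_m$ cut by the partition linearly with slope $\beta$ onto the intervals $R_j$ described in Case I and Case II. Then the first-return decomposition along paths into $0$ gives $R_{00}(1/\beta)=1$ exactly: lengths are conserved, and no length escapes to infinity because $|R_m|\le|R_0|$ while the mass remaining in levels above $N$ after $k$ steps is bounded by $\beta^{-k}$ times the total. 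Alternatively, if an explicit bound is hard, combine $L_{00}(1/\beta)\le1$ with positive recurrence via $\varrho(0)<\beta$ from \Cref{Rho}, using the distinct defects of \Cref{Defect}. But the eigenvector route seems most direct.
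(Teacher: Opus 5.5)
Aperiodicity is fine. Each of the other three steps has a genuine gap. The golden mean $\beta=(1+\sqrt5)/2$ is a useful test case for all of them: there $d^*(\ell_\beta,-\beta)=10^\infty$, and by Example~\ref{Example:x^2-x-1} state $0$ is not reachable from $\{1,2,3,\dots\}$.

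\textbf{Irreducibility.} This failure at the golden mean is \emph{not} due to non-transitivity. There $T_{-\beta}$ is still ergodic, and its invariant density is $1$ on $[\ell_\beta,0)$ and $1/\beta$ on $[0,r_\beta)$, hence positive everywhere. What fails is that no cylinder inside $[b_1]$ ever has full image: every $R_j$ with $j\ge1$ equals $[\ell_\beta,0]$ or $[0,r_\beta]$. So irreducibility for $\beta>(1+\sqrt5)/2$ really does require the combinatorial lemma you postpone as the ``fiddly part'', and you do not prove it.

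The reduction around that lemma does not close either. Since $m\to m+1$ always exists, irreducibility is equivalent to: no upper set $\{K,K+1,\dots\}$ with $K\ge1$ is closed under arrows. Finding one even $m$ with $b_{m+1}\ge1$ reachable from $1$ only shows that the states $\le m$ reach $0$. You still need arbitrarily high states to come back down, and your bullets do not give this:
\begin{itemize}
\item an even state treated by $(*)$ with $b_{m+1}=0$ has no downarrow at all;
\item from the top $s$ of a Case II block you land at some $n\le m$, from which you can only climb again (for $n=m$, straight back into the same block).
\end{itemize}
So ``iterating strictly decreases the height'' is unsupported. The paper argues irreducibility from ergodicity and equivalence of the acim with Lebesgue measure instead. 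The golden-mean case shows those facts alone do not settle reachability of state $0$, so the point you isolated is the right one, but it is left open.

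\textbf{Perron value.} This step is a non sequitur. $(\Ab_{-\beta}^k)_{00}$ counts loops at $0$, not all paths from $0$. On a countable graph, even an irreducible one, the loop growth rate can be strictly smaller. At the golden mean, paths from $0$ grow like $\beta^k$ while $(\Ab_{-\beta}^k)_{00}=1$. Comparison with admissible words gives only $\lambda\le\beta$.

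\textbf{Recurrence.} This step has the matching gap. Iterating $\Ab_{-\beta}r=\beta r$ with $r_j=|R_j|$, stopped at the first return to $0$, gives
\[
|R_0|\Bigl(1-\sum_{k=1}^{n}\frac{\ell_{00}(k)}{\beta^{k}}\Bigr)=\frac{1}{\beta^{n}}\sum_{j\neq 0}q_n(j)\,|R_j| ,
\]
where $q_n(j)$ counts paths of length $n$ from $0$ to $j$ that avoid $0$ at times $1,\dots,n$. The right side is the Lebesgue measure of the points of $R_0$ whose lift has not yet returned to level $0$. Showing it tends to $0$ is the whole content of the claim.
\begin{itemize}
\item Your bound $|R_j|\le|R_0|$ only gives $|R_0|\beta^{-n}\sum_j q_n(j)$, and the number of non-returning paths can be of order $\beta^n$. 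At the golden mean, lengths are conserved and $|R_j|\le|R_0|$, yet $L_{00}(1/\beta)=1/\beta$.
\item What is needed is irreducibility together with a real no-escape estimate. For example: being at level $\ge N$ at time $k$ forces the orbit to have followed the itinerary $b_1\cdots b_j$ of $\ell_\beta^+$ during the last $j$ steps, for some $j\ge N$. That event has Lebesgue measure $O(\beta^{-N})$ uniformly in $k$, so the level process is tight and hence, given irreducibility, positive recurrent.
\item Your fallback via \Cref{Rho} is circular as written. It needs $\varrho(0)<\lambda$, i.e.\ exactly the missing bound $\lambda\ge\beta$.
\end{itemize}

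\textbf{Comparison with the paper's ordering.} The paper establishes the single identity $L_{00}(1/\beta)=1$ by the same count of first-return subintervals of $R_0$, so it relies on the same remainder vanishing. It then gets recurrence and $\lambda=\beta$ at once from \Cref{Lemma 7.1.6 (iv)} and \Cref{Lemma 7.1.8}: $H_{00}(z)=1/(1-L_{00}(z))$ is finite for $0<z<1/\beta$ and blows up at $z=1/\beta$. Once that identity is proved, your separate entropy paragraph is unnecessary.
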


\textcolor{red}{
The same assertion holds for $1<\beta\le (1+\sqrt{5})/2$, if we properly 
restrict  
$A_{-\beta}$ to its irreducible component, reflecting the support of the absolutely continuous invariant measure of $T_{-\beta}$.
}

\begin{proof}
Since the map $T_{-\beta}$ has a unique invariant measure that is equivalent to the Lebesgue measure, for any interval $I, J \subset [\ell_\beta,r_\beta)$, there exists $n$, 
\begin{align*}
    \mu\left(T_{-\beta}^{-n}(I)\cap J\right)>0.
\end{align*}
This implies that every state can be reached from every other state. Hence $\Ab_{-\beta}$ is irreducible.
Also, since $(\Ab_{-\beta})_{00}>0$, $\Ab_{-\beta}$ is  aperiodic. 
By the definition of the map $T_{-\beta}$, 
every interval is expanded by a factor of $\beta$, and subdivided by the discontinuity points. When a subdivided interval becomes full, it is sent
back to the unit interval. Counting such intervals yields
\begin{align*}
L_{00}\left(\frac{1}{\beta}\right)=\sum_{n=1}^{\infty}\frac{\ell_{00}(n)}{\beta^n}=1.
\end{align*}
Thus, by \Cref{Lemma 7.1.6 (iv)} and \Cref{Lemma 7.1.8}, $\Ab_{-\beta}$ is recurrent, the radius of convergence of $H_{ii}(z)$ is $1/\beta$, and the Perron value of $\Ab_{-\beta}$ is $\beta$.
\end{proof}

For a finite set $\A$, we denote by $\A^*$ the monoid generated by concatenation over 
$\A$ which has the identity; namely the empty word. 

\begin{Lem}
\label{Recur1}
Let $W$ be a non-empty set of positive integers. Let 
$s_W(k)$ be the cardinality of words in $W^*$ whose arithmetic sum of letters
is equal to $k$. Then
$$
s_W(k)=\sum_{j\in W} s_W(k-j).
$$
holds for $k\ge 1$. This
does not hold at $k=0$,
since $s_W(0)=1$ by the empty word.
\end{Lem}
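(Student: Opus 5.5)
The plan is to classify the words of $W^*$ with letter sum $k\ge 1$ by their last letter. Let $S_W(k)$ denote the set of words $w=w_1w_2\cdots w_t\in W^*$ with $w_1+\cdots+w_t=k$, so that $s_W(k)=\# S_W(k)$. We adopt the convention $s_W(k)=0$ for $k<0$, since no word has negative letter sum. This convention is what makes the sum $\sum_{j\in W}s_W(k-j)$ meaningful for every $j\in W$. We also note that $s_W(k)$ is finite: letters are positive integers, so a word in $S_W(k)$ has length at most $k$, and only letters $\le k$ can occur. This finiteness holds even if $W$ is infinite. Likewise, the sum over $j\in W$ has only finitely many nonzero terms.

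For $k\ge 1$, the empty word has sum $0\neq k$, so every $w\in S_W(k)$ is non-empty and has a well-defined last letter $j=w_t\in W$. Removing it gives a word $w'=w_1\cdots w_{t-1}\in W^*$ with letter sum $k-j$, which may be the empty word when $k=j$. This yields a map
\[
S_W(k)\longrightarrow \bigsqcup_{j\in W} S_W(k-j),\qquad w'j\longmapsto (j,w').
\]
Its inverse is concatenation $(j,w')\mapsto w'j$, which plainly lands in $S_W(k)$. Hence the map is a bijection onto the disjoint union, and taking cardinalities gives $s_W(k)=\sum_{j\in W}s_W(k-j)$. The terms with $j>k$ vanish, and the term $j=k$ (if $k\in W$) contributes $s_W(0)=1$, accounting for the one-letter word $k$.

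At $k=0$ the recurrence fails. Only the empty word has sum $0$, so $s_W(0)=1$, while the right-hand side is $\sum_{j\in W}s_W(-j)=0$ because every $j\in W$ is positive. There is no real obstacle in this argument. The only care needed is to fix the convention $s_W(k)=0$ for negative $k$, and to check that removing the last letter is legitimate precisely because $k\ge1$ excludes the empty word, which is the exact point where $k=0$ differs.
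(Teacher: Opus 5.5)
Your proof is correct and is the paper's argument: classify the words with letter sum $k\ge 1$ by their last letter. You also make explicit two points the paper leaves implicit, namely the convention $s_W(k)=0$ for $k<0$ and the finiteness of $s_W(k)$.
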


\begin{proof}
Let $M(k)$ be the set of words in $W^*$ whose sum of letters
is equal to $k$. 
The statement is clear by classifying $M(k)$ with respect to 
the last letter of $x\in M(k)$. 
\end{proof}

\begin{Lem}
\label{Recur2}
Let $u, m$ be positive integers with $u\le m$ and $a_u,\ldots,a_m$ are non-negative integers.  
Set
$$
S_{u,m}(n)=\sum_{u a_u+\dots + m a_m<n} \binom{a_u+a_{u+1}+\dots+ a_m}{a_u,a_{u+1},\dots,a_m}.
$$
Then we have
$$
S_{u,m}(n) = 1+\sum_{j=u}^m S_{u,m}(n-j).
$$
for $n\in \N$.
\end{Lem}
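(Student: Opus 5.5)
The plan is to read $S_{u,m}(n)$ combinatorially, in the spirit of \Cref{Recur1}. With $W=\{u,u+1,\dots,m\}$, the multinomial coefficient $\binom{a_u+\dots+a_m}{a_u,\dots,a_m}$ counts the words in $W^*$ that contain exactly $a_j$ copies of the letter $j$ for each $j$. The arithmetic sum of letters of such a word is $u a_u+\dots+m a_m$. Summing over all tuples with $ua_u+\dots+ma_m<n$ therefore gives
\[
S_{u,m}(n)=\sum_{k=0}^{n-1} s_W(k),
\]
the number of words in $W^*$ whose letter sum is at most $n-1$. Here we adopt the convention $s_W(k)=0$ for $k<0$, which makes $S_{u,m}(n)=0$ for $n\le 0$. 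This convention is needed to make sense of the right-hand side when $n-j\le 0$.

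Next I will apply \Cref{Recur1} termwise. For $1\le k\le n-1$ we have $s_W(k)=\sum_{j\in W}s_W(k-j)$, while $s_W(0)=1$. Hence
\[
S_{u,m}(n)=1+\sum_{k=1}^{n-1}\sum_{j=u}^m s_W(k-j)=1+\sum_{j=u}^m\sum_{k=1}^{n-1}s_W(k-j).
\]
Since $j\ge u\ge 1$, we have $s_W(k-j)=0$ for all $k\le 0$, so the inner sum can be extended down to $k=0$ without changing it. Reindexing with $k'=k-j$ then turns the inner sum into $\sum_{k'<n-j}s_W(k')=S_{u,m}(n-j)$. This gives the claimed identity.

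The argument also works directly, without passing through \Cref{Recur1}. Classify the nonempty words by their last letter $j\in W$. Removing that letter gives a bijection with words of letter sum $<n-j$, and the empty word contributes the $1$.

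The only delicate point is the boundary bookkeeping. I need to check that the convention $S_{u,m}(n-j)=0$ for $n-j\le 0$ agrees with the definition, which it does because the defining sum is empty when $n\le 0$. I also need the case $n=1$, which reduces correctly to $1=1+0$. With those checks, the identity holds for all $n\in\N$.
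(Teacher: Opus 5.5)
Your proof is correct and is essentially the paper's argument: both read $S_{u,m}(n)$ as the number of words in $W^*$ (with $W=\{u,\dots,m\}$) whose letter sum is less than $n$, split off the empty word, apply Lemma~\ref{Recur1} termwise for $1\le k<n$, and reindex. Your explicit check that $S_{u,m}(n-j)=0$ for $n-j\le 0$ agrees with the definition, as an empty sum, makes precise a boundary step that the paper uses tacitly when it extends the inner sum to $0\le k<n$.
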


\begin{proof}
Set $W=\{u,u+1,\dots,m\}$.
Since the multinomial coefficient counts permutations of a multiset,
the sum $S_{u,m}(n)$ is 
the number of words in $W^*$ whose arithmetic sum of letters is less than $n$.
Using Lemma \ref{Recur1}, we have
$$
S_{u,m}(n)=\sum_{0\le k<n} s_W(k)
=1+\sum_{1\le k<n} s_W(k)
=1+\sum_{j\in W} \sum_{0\le k<n} s_W(k-j) =1+\sum_{j\in W} \sum_{0\le k<n-j} s_W(k).
$$
\end{proof}

\begin{Th}
\label{SystemPositiveRecurrent}
For $\beta>(1+\sqrt{5})/2$, $\Ab_{-\beta}$ is positive recurrent.
\end{Th}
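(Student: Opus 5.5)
Our plan is to apply \Cref{Rho} at a well-chosen state: it suffices to find one state $i$ with $\varrho(i)=\limsup_n \sqrt[n]{\ell_{ii}(n)}<\beta$. The natural candidate is the state $1$. By \Cref{Prop: irreducible aperiodic and recurrent}, $A_{-\beta}$ is irreducible, aperiodic and recurrent with Perron value $\beta$, so only the strict inequality $\varrho(1)<\beta$ remains. We would bound $\ell_{11}(n)$, the number of first-return loops at $1$ of length $n$, by path counting based on \Cref{How to construct the Markov diagram} and \Cref{Defect}.

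\textbf{Step 1 (structure of first-return loops at state $1$).} The only arrows into $1$ are $0\xrightarrow{b_1}1$, the self-loop $1\xrightarrow{b_1}1$, the arrows $m\xrightarrow{b_1}1$ for odd $m$, and possibly a downarrow $s\xrightarrow{b_1}1$ of the Case II type. Consequently, a first-return loop at $1$ either (a) is the self-loop, or (b) climbs from $1$ by uparrows and then descends, passing through intermediate states $\ge 1$ but never $1$ itself. A descent through a downarrow $s\to n$ with $n\ge 2$ is followed by another climb, unless it goes to $0$. If it goes to $0$, the path wanders at $0$ (there are $b_1$ loops there) and re-enters via $0\xrightarrow{b_1}1$. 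We therefore decompose every loop avoiding state $0$ into a sequence of ``climb then jump down'' blocks. Going up from $n$ to $s$ and jumping back along $s\to n$ closes a cycle of length exactly the defect $s-n+1$. By \Cref{Defect}, these cycle lengths are distinct positive integers, and the relevant ones are all $\ge 2$ apart from the self-loop at $1$.

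\textbf{Step 2 (counting).} Excursions that avoid state $0$ consist of a strictly climbing skeleton decorated with nested cycles. Because every downarrow to a positive state is unique per defect, the number of such decorated paths of length $<n$ whose cycles use lengths in $W=\{u,\dots,m\}$ is bounded by $S_{u,m}(n)$ from \Cref{Recur2}. Taking the full set of defects $W\subset\N$ with distinct elements, $\sum_{j\in W}x^j\le \sum_{j\ge 1} x^j=x/(1-x)$. The growth rate of $s_W$ is therefore the root of $\sum_{j\in W} z^{-j}=1$, which is at most that of the case where $W$ is all positive integers. By \Cref{Recur1}, that full case has growth $2$. A sharper comparison uses the golden mean: $\beta>(1+\sqrt5)/2$ implies $b_1\ge 1$. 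In the extremal case $W=\{1,2,\dots\}$ the growth of $s_W$ is $2$, but the actual multiplicities with $b$-labels and the contribution of state $0$ (which multiplies by at most $b_1$ per step and exits with weight comparable to $\beta^{-k}$) must be controlled. So we would write
$$\ell_{11}(n)\le \sum_{k}(\text{paths through }0\text{ of length }k)\cdot(\text{defect-decorated paths of length }n-k),$$
and bound each factor by an exponential rate strictly below $\beta$. The rate for the first factor is $b_1<\beta$ when $\beta\notin\N$. For the second factor, the rate is the root $\theta$ of $\sum_{j\in W}\theta^{-j}=1$, and we need $\theta<\beta$.

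\textbf{Main obstacle.} The hard step is showing that the defect-decorated climbs grow strictly slower than $\beta$. Distinctness gives only $\theta\le 2$, which is insufficient when $\beta<2$. We would try to exploit the fact that $\sum_{j\in W}\beta^{-j}$ is strictly less than $1$. This should follow from $L_{00}(1/\beta)=1$ in the proof of \Cref{Prop: irreducible aperiodic and recurrent}: the mass returning to $0$ is positive, which strictly reduces the mass of the cycles. Then $\theta<\beta$ by monotonicity and continuity of $z\mapsto\sum_{j\in W}z^{-j}$. Using \Cref{Recur2} to convert the multinomial count into the renewal recursion would give $S_{u,m}(n)=O(\theta^n)$ uniformly in $m$. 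Combined with the convolution bound, this yields $\varrho(1)\le\max(b_1,\theta)<\beta$, and \Cref{Rho} then finishes the proof.
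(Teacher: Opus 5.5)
There is a genuine gap. Your framework matches the paper's: Lemma~\ref{Rho} at state $1$, a split of first-return loops into those through $0$ and those avoiding $0$, an injective encoding of walks by their defect words via Corollary~\ref{Defect}, and counting with Lemma~\ref{Recur2}. For $\beta>2$ your crude rate $2$ suffices; the paper uses state $0$ in that range.

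The decisive range $(1+\sqrt5)/2<\beta\le 2$, however, is not proved. Your derivation of $\sum_{j\in W}\beta^{-j}<1$ from $L_{00}(1/\beta)=1$ is not an argument. That identity concerns first-return loops at $0$, whose generating function is not $\sum_{j\in W}z^j$, and ``positive mass returning to $0$'' gives no termwise inequality on the defect set.

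Your closing step is also inconsistent. $S_{u,m}(n)$ counts all words over the full interval $\{u,\dots,m\}$, so it cannot see the actual defect set. For $u=1$ one has $S_{1,m}(n)=2^{n-1}$ as soon as $m\ge n-1$. Hence no bound $O(\theta^n)$ with $\theta<2$, uniform in $m$, is possible.

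The missing idea is the one you state in Step~1 and then drop. In a first-return loop at $1$ of length $n\ge 2$, the self-loop at $1$ never occurs, and by Corollary~\ref{Defect} it is the only arrow of defect $1$. So the defect words lie in $\{2,\dots,n-1\}^*$, and loops avoiding $0$ number at most $S_{2,n-1}(n)$.

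Lemma~\ref{Recur2}, together with $F_n=1+\sum_{j=1}^{n-2}F_j$, gives $S_{2,m}(n)\le F_n$ by induction. Loops through $0$ add at most $\sum_{j=3}^{n-2}S_{2,j-1}(j)$, because $b_1=1$ leaves no choice once the walk is at $0$. Hence $\ell_{11}(n)\le 2F_n$ and $\varrho(1)\le(1+\sqrt5)/2<\beta$. This is exactly where the threshold in the statement comes from.

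Incidentally, the inequality you hoped for is true, but for a structural reason rather than because $L_{00}(1/\beta)=1$. By the construction rule of the Markov diagram, every arrow into a positive state is either the self-loop at $1$, an arrow $m\to 1$ with $m$ odd, or a Case~II arrow $s\to n$ with $s-n+1=m$ odd. So all defects are odd, and $\sum_{j\in W}\beta^{-j}\le\beta/(\beta^2-1)$, which is $<1$ precisely when $\beta>(1+\sqrt5)/2$. To use this you would have to count words over $W$ itself rather than bound them by $S_{u,m}$.
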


The idea is to estimate $\varrho(i)$ for some $i$
and use Lemma \ref{Rho}. 
The key to this proof is a subtle property of the Markov diagram in Corollary \ref{Defect}.

\begin{proof}
Recall that $\ell_{00}(n)$
is the number of walks of length $n$
in the Markov diagram which start from $0$ and return to $0$ without visiting $0$ in the middle.
Until $n-1$ seconds, the walk will climb up one step in our diagram, or climb
down by $i-j$ steps by the arrow $i\rightarrow j$
with $i\ge j>0$. 
The defect $i-j+1$ gives the difference
from the expected increase by one second, that is, one.
At $n-1$ seconds, the walk is at the level
\begin{equation}
\label{Pos}
(n-1) -\sum \text{defects} \ge 0.
\end{equation}
Then finally, at $n$ seconds, we get back to $0$ by multiplicities
at most $b_1$.
By the construction of the Markov diagram in \Cref{How to construct the Markov diagram}, this is the only occasion when we may have plural edges.
Putting $W=\{1,2,\dots,n-1\}$, the walk of length $n-1$ is encoded as a word of $W^*$ by the order of occurrences 
of defects.
This gives a map from the set of walks of length $n-1$ starting from $0$ which never return to $0$
to the set of words over defects $\{1,2,\dots,n-1\}$ whose arithmetic sum is less 
than $n$ in view of (\ref{Pos}).
This map is injective by \Cref{Defect} since from such a word we can uniquely retrieve the walk
so that the defects occur according to the order of the word.
By this discussion, we have
$$
\ell_{00}(n)\le b_1 S_{1,n-1}(n).
$$
By using the recursive formula of \Cref{Recur2}, we have
$$
S_{1,m}(n)\le 2^{n-1}
$$
for $n,m\in \N$. This implies 
$$
\varrho(0)=\limsup_{n\to \infty}
\ell_{00}(n)^{1/n}\le 2.
$$
Thus, if $\beta>2$, $A_{-\beta}$
is positive recurrent by \Cref{Rho}.
For $\beta\le 2$, we \textcolor{red}{have $b_1=1$ and} consider
$\ell_{11}(n)$ instead.
The set of walks that start from state $1$ and return 
to state $1$ without visiting state $1$ in between can be classified into two types. The first type
of walk visits $0$ in the meantime, and the second type of walk 
never visits $0,1$ in the middle. 
The number of walks
of the second type is bounded 
by $S_{2,n-1}(n)$ in the same manner as above with the choice $W=\{2,3,\dots,n-1\}$.
Note that by \Cref{Defect}, the defect $1$ does not appear since we are interested in $\ell_{11}(n)$
for $n\ge 2$.
For the first type, the walk must stay in $0$ until
$n-1$ seconds after it reaches the
state $0$. Therefore, the number of the first 
type of walks is estimated by
$\sum_{j=3}^{n-2} S_{2,j-1}(j)$, by applying a similar estimate
at the last level $j$
before hitting the state $0$.
There is no choice of walks 
after landing the state $0$, because
$b_1=1$.
Therefore, we have
$$
\ell_{11}(n)\le S_{2,n-1}(n)+
\sum_{j=3}^{n-2} S_{2,j-1}(j).
$$
By using \Cref{Recur2}, we can show
$$
S_{2,m}(n)\le F_n
$$
for $n, m\in \N$ by induction.
Here, $F_n$ is the Fibonacci number
defined by $F_{n+2}=F_{n+1}+F_n$
with $F_0=0$ and $F_1=1$
and we use an identity:
$$
F_n=1+\sum_{j=1}^{n-2} F_{j}.
$$
This implies
$$
\ell_{11}(n)\le 2F_n, \quad
\varrho(1)=\limsup_{n\to \infty} \ell_{11}(n)^{1/n}
\le \frac{1+\sqrt{5}}{2}
$$
and $A_{-\beta}$ is positive recurrent when $(1+\sqrt{5})/2<\beta\le 2$ by \Cref{Rho}.
The proof is finished.
\end{proof}

\begin{Rem}
\textcolor{red}{
By choosing a proper irreducible component of the Markov Diagram, we can show 
positive recurrence for smaller $\beta$'s. For example, 
in the above proof, we may select $\ell_{22}(n)$
for $\beta\le 2$. Since two self-loops at $0$ and $1$ are 
joined by an arrow from state $0$ to $1$, the number of walks of the first type is bounded by
$n \sum_{j=4}^{n-2} S_{3,j-1}(j)$, while the one for
the second type is bounded by $S_{3,n-1}(n)$.
This yields the bound $\varrho(2)\le \alpha$, where $\alpha\approx 1.46557$ is the root of $x^3-x^2-1$
in view of Lemma \ref{Recur2}.
When $\beta\le (1+\sqrt{5})/2$, the state $0$ should be removed from the diagram
to ensure irreducibility, and we see that the associated countable matrix is positive recurrent for 
$\beta>\alpha$ (See \Cref{Example:x^2-x-1}). 
However, to finish the proof for all $\beta>1$,  
it still remains
a considerable combinatorial study of the system. 
We postpone this task to the next paper. }
\end{Rem}

By \Cref{GPFT}, there exists exactly one positive left eigenvector and one positive right eigenvector of the matrix $A_{-\beta}$
corresponding to the eigenvalue $\beta$ 
up to constant multiples.
Assume that $\beta > (1+\sqrt{5})/2$, then $A_{-\beta}$
is positively recurrent.
By \Cref{FAT} (Finite Approximation Theorem), these eigenvectors can be approximated using a sufficiently large number of
states of the Markov diagram. Therefore, we obtain the main theorem below, which gives a concrete 
construction of the natural extension, using the rectangles determined by these eigenvectors:
\begin{Th}\label{Th:left and right eigenvector}
Assume that $\beta > (1+\sqrt{5})/2$.
The right eigenvector of $A_{-\beta}$ 
corresponding to $\beta$ 
is made explicit as 
\begin{align}\label{right eigenvector}
    r=(|R_0|,|R_1|,\ldots,|R_n|,\ldots)^T,
\end{align}
where $|R_n|$ is the length of the $n$-th floor of the
Markov diagram in \Cref{Hofbauer tower} and
the vector $v^T$ is a transpose of a vector $v$.

Let $\ell:=\ell^{(0)}=(\ell_0,\ell_1, \ldots)$ be the left eigenvector of $A_{-\beta}$ corresponding to $\beta$ and
\textcolor{blue}{$L_n$ an interval with the length $\ell_n$
for $n \in \mathbb N \cup \{0\}$.}
We stack the rectangles $\mathcal{R}_n=R_n\times L_n$
vertically \textcolor{red}{and obtain the disjoint union of rectangles $\mathcal{H}$ (See \Cref{Fig:NE_Example1and2})
whose $2$-dimensional Lebesgue measure is finite.}
Then we can construct the natural extension of $T_{-\beta}$ on $\mathcal{H}$ \textcolor{red}{which preserves the $2$-dimensional Lebesgue measure}. Consequently, the density function
of the absolutely continuous invariant measure of $T_{-\beta}$
is
\begin{equation}
\label{Density2}    
h_{-\beta}(x)=\sum_{n\ge 0, ~x\in R_n} \ell_n .
\end{equation}
\end{Th}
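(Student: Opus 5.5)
We will prove the three assertions in order: the right eigenvector, the construction of the map on $\mathcal H$ together with its measure-preservation, and the density formula. Throughout we use that $A_{-\beta}$ is irreducible, aperiodic and positive recurrent with Perron value $\beta$ (\Cref{Prop: irreducible aperiodic and recurrent}, \Cref{SystemPositiveRecurrent}), so \Cref{GPFT} gives uniqueness of positive eigenvectors and $\ell\cdot r<\infty$.

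\textbf{Step 1: the right eigenvector.} By uniqueness up to scaling, it suffices to check that $r=(|R_n|)_n$ is positive and satisfies $(A_{-\beta}r)_m=\beta|R_m|$ for every $m$. Positivity is clear, since each $R_n$ is a nondegenerate interval. Since $T_{-\beta}$ has slope $-\beta$ on each $I_j$, we get $\beta|R_m|=\sum_j |T_{-\beta}(R_m\cap \overline{I_j})|$. By the tower analysis in Case I and Case II, each image $T_{-\beta}(R_m\cap\overline{I_j})$ is exactly the floor $R_k$ reached by the arrow $m\xrightarrow{j}k$. Full pieces return to $R_0$ or to $R_1=[\ell_\beta,T_{-\beta}(\ell_\beta^+)]$ (label $b_1$). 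The non-full end piece goes to $R_{m+1}$. In Case II, the arrow $s\xrightarrow{b_n}n$ corresponds to the piece $[T^{n-1}(\ell_\beta^+),c_{b_n}]$ or $[c_{b_n+1},T^{n-1}(\ell_\beta^+)]$, whose image is $R_n$. Summing lengths over the arrows with multiplicity gives the eigen-equation row by row.

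\textbf{Step 2: the natural extension.} Let $\ell=\ell^{(0)}$, and place the rectangles $\mathcal R_n=R_n\times L_n$ disjointly. Then $\mathrm{Leb}(\mathcal H)=\sum_n|R_n|\ell_n=\ell\cdot r<\infty$ by \Cref{GPFT}(d). The left eigen-equation reads $\beta\ell_k=\sum_m \ell_m a_{mk}$: the $\beta$-scaled height of floor $k$ equals the total height of the incoming arrows. We therefore partition $L_k$ into consecutive subintervals $J_{m\to k}$ of length $\ell_m/\beta$, one for each arrow $m\to k$ (with multiplicity), in a fixed order. We then define
\[
\mathcal T(x,y)=\bigl(T_{-\beta}x,\ \phi_{m\to k}(y)\bigr)\quad\text{for }(x,y)\in (R_m\cap I_j)\times L_m ,
\]
where $m\xrightarrow{j}k$ and $\phi_{m\to k}$ is the affine contraction by $1/\beta$ of $L_m$ onto $J_{m\to k}$. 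This map multiplies the $x$-direction by $\beta$ and the $y$-direction by $1/\beta$, so it preserves the 2-dimensional Lebesgue measure. It maps each piece bijectively onto $T_{-\beta}(R_m\cap I_j)\times J_{m\to k}=R_k\times J_{m\to k}$. Since the $J_{m\to k}$ tile $L_k$, the map $\mathcal T$ is a bijection of $\mathcal H$ modulo null sets. The first projection $\pi(x,y)=x$ satisfies $\pi\circ\mathcal T=T_{-\beta}\circ\pi$, so $(\mathcal H,\mathcal T,\mathrm{Leb})$ is an invertible extension of $T_{-\beta}$.

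\textbf{Main obstacle: minimality.} We must show that $\bigvee_{n\ge0}\mathcal T^n\pi^{-1}\mathcal B$ generates the Borel $\sigma$-algebra. Our approach is to note that the sets $\mathcal T^n(\pi^{-1}(\text{cylinder}))$ are horizontal strips whose heights are $\ell_m\beta^{-n}$ times a bounded factor, since the contraction is exactly $1/\beta$ per step. To make the strips separate points vertically, we need the backward orbit of $y$ to encode the arrow sequence, i.e. the past path in the Markov diagram; this holds because the intervals $J_{m\to k}$ are labeled by arrows. Hence the strips shrink to points, and the $\sigma$-algebras generate. Injectivity on paths follows from Hofbauer's identification of $\Sigma^+_{\mathcal D}$ with $\Sigma^+_T$, up to countable sets; invoking that identification is the most delicate bookkeeping in this step.

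\textbf{Step 3: the density.} Pushing $\mathrm{Leb}|_{\mathcal H}$ forward under $\pi$ gives an absolutely continuous $T_{-\beta}$-invariant measure with density $\sum_{x\in R_n}\ell_n$. This is exactly \eqref{Density2}, and it is the unique acim up to scaling by the Li--Yorke uniqueness quoted earlier.
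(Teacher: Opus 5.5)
Your proposal follows the paper's proof essentially step for step. Both arguments use the same ingredients: the row-by-row identity $\sum_i a_{mi}|R_i|=\beta|R_m|$ read off from the tower geometry, plus uniqueness of the positive eigenvector; finiteness of the area $\ell\cdot r$ from positive recurrence; cutting the stretched and compressed rectangles at the partition points and restacking them along the arrows, so that $\sum_i a_{in}\ell_i/\beta=\ell_n$ makes the pieces tile each floor; and projecting Lebesgue measure to obtain the density. Your minimality paragraph goes beyond the paper, which only asserts that the nested intervals in $L_n$ shrink to a point, but two of its justifications are loose. First, the heights are exactly $\ell_{m_n}\beta^{-n}$ with no a priori bound on $\ell_{m_n}$, so "bounded factor" is unsupported; instead use Poincar\'e recurrence of $\mathcal T^{-1}$ on the finite-measure space $\mathcal H$, so that a.e.\ backward path returns to its own floor $k$ infinitely often, where the nested heights equal $\ell_k\beta^{-n}\to 0$. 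Second, Hofbauer's identification $\Sigma_T^+=\Phi^+(\Sigma^+_{\mathcal D_X})$ is a one-sided surjectivity statement and does not show that the backward $x$-orbit determines the floor path; that needs a separate argument, e.g.\ via recurrence to state $0$.
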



\begin{proof}
Let $A_{-\beta}=(a_{ij})$.
By the construction of Hofbauer's Markov diagram
and (\ref{R_m}), (\ref{R_m+a}), (\ref{R_m+n}),
$$ 
a_{n0} \frac{|R_0|}{\beta}+
a_{n1} \frac{|R_1|}{\beta}+
\cdots+
a_{ni} \frac{|R_i|}{\beta}+\cdots=|R_n|$$
for any positive integer $n$, and we can show that
the right eigenvector corresponding to $\beta$ is given by 
(\ref{right eigenvector}).
By \Cref{Lemma 7.1.21} (or \Cref{GPFT}), $\ell^{(0)}\cdot r^{(0)}$ is finite. 
By \Cref{Lemma 7.1.9 (i)}, \Cref{right eigenvector} coincides with  
\begin{align*}
r^{(0)}=\left(R_{00}(1/\beta), R_{10}(1/\beta), \cdots ,R_{i0}(1/\beta), \cdots \right)^{T},
\end{align*}
up to constant multiples. 
So, $\ell^{(0)} \cdot r<0$,  that is, the total area of $\mathcal H$ is finite.

Now we can construct a natural extension of $(-\beta)$-transformation $\mathcal{T}_{-\beta}$ on $\mathcal H$.
The first coordinate of the natural extension is given by $T_{-\beta}$ itself. For each rectangle $\mathcal{R}_n$, we expand it by a factor of $-\beta$ in the $x$-direction and contract it by a factor of $1/\beta$ in the $y$-direction. 
Hence, the area is preserved. 

Next, we divide the resulting rectangle at 
$x=(-\beta) \cdot (r_\beta-i/\beta)$ for $i=1,2,\ldots,b_1$, 
and move each subrectangle according to the arrow of the Markov diagram, stacking them onto the corresponding floor. 
Since $\ell=(\ell_0,\ell_1,\ldots)$
is a left eigenvector of $\Ab_{-\beta}$ with eigenvector $\beta$,
$$ 
a_{0n} \frac{\ell_0}{\beta}+
a_{1n}\frac{\ell_1}{\beta}+
\cdots+ a_{in}\frac{\ell_i}{\beta}+\cdots =\ell_n.
$$

Therefore, the total height of the rectangles stacked on the
$n$-th floor is exactly equal to the height of $\mathcal{R}_n$.
Consequently, the image of the construction coincides with $\mathcal{H}$.
We now present the specific mapping $\mathcal{T}_{-\beta}$ of the natural extension of the $(-\beta)$-transformation, in the spirit of \cite{Dajani-Kraaikamp-Solomyak96}. It is based on the Markov diagram (See \Cref{Markov diagram}). 
Let $\ell_{\beta}=(b_1b_2\cdots)_{-\beta}$. 
For ease of understanding, 
we start with a construction in a symbolic setting. Formally, we 
denote the second component of $(x, y)\in \mathcal{R}_i=R_i\times L_i$ by
\begin{align*}
y=[\underbrace{0\cdots0}_{i}c_{i+1}c_{i+2}\cdots]_{-\beta}.
\end{align*}
There is an arrow from the state $i$ labeled by $d_1(x)$.
If it is an uparrow, then
\[
y^*:=[\underbrace{0\cdots0}_{i+1}c_{i+1}c_{i+2}\cdots]_{-\beta}, 
\]
if it is a downarrow or a selfloop from the state $i$ to $n$, then
\[
y^*:=[\underbrace{0\cdots0}_{n}b_{n+1}b_{n+2}\cdots b_i d_1(x) c_{i+1}c_{i+2}\cdots]_{-\beta}.
\]
Then, $\mathcal{T}_{-\beta}$ is defined by $\mathcal{T}_{-\beta} (x, ~y):=\left(T_{-\beta}(x), ~y^*\right)$. 

We wish to realize this map as a Lebesgue  
measure preserving map among rectangles (cylinders) like Baker's map. 
The rectangle is stretched $(-\beta)$ times in the horizontal 
direction and shrinked $1/\beta$ times 
in the vertical direction, and cut into pieces and sent to the next rectangle according to the arrow of the Markov diagram.
To this end, we define the value $[\omega]_{-\beta}$
of the infinite words $\omega$ 
as a point in $L_n$ by our coding.
Hereafter, we are obliged to fix the order of stacking. Let us
align the center lines of the 
rectangles ${\mathcal R}_n$ to the real axis
and stack $a_{in}$ rectangles of height $\ell_i/\beta$ and width $r_n=|R_n|$ 
as the contribution of the incoming arrows from state $i$ to state $n$, 
keeping the order\footnote{For a fixed $R_n$, we may stack by any order of the incoming edges
to $R_n$ for this construction. 
The simplest choice
is to stack them 
in the order of $i$, but we may not expect a good embedding in $\R^2$ as 
in Example \ref{Example:x^2-x-1}.}
of~$\prec$. More precisely,
if $i$ is odd, we stack upward in the lexicographic order of $(i,d_1(x))$,
and if $i$ is even, then
we stack downward, in the negative direction, in the order of~$(i,d_1(x))$. 

Tracing back the itinerary in the Markov diagram, 
we know the exact
location of the interval in $L_n$ which corresponds 
to the cylinder determined by a prefix of $\omega$. 
Taking longer prefixes, we obtain a sequence of 
nested intervals which converges to a single point, 
as the prefix length tends to infinity. 
In this way, we can define the height of $\omega$ from the center line, 
i.e., the
exact value of $[\omega]_{-\beta}$
in $L_n$. This map $\omega\to [\omega]_{-\beta}$ may not be realized as
a simple interval map. See Example \ref{Example:x^2-x-1} for a lucky case.

\end{proof}

\textcolor{red}{
After normalization to the probability measure, \Cref{Th:left and right eigenvector}
gives the expected return time to a given cylinder (c.f. a special case of the renewal theorem \cite[Theorem 7.1.18]{Kitchens98}),
as the reciprocal of the area of the rectangle 
(c.f. \cite[Lemma 7.1.21]{Kitchens98}). 
Therefore, we find that renewal theory for countable Markov shift, and the Ka\u{c} formula (c.f. Theorem 1.2.2 in \cite{VianaOlivieira}) consistently work in this setting (c.f. \cite[Lemma 7.1.22]{Kitchens98}).
From our natural extension, we obtain 
a comprehensive
understanding of the return time structure of the dynamical system of $T_{-\beta}$.}

We attempted to write the dual part $y$ as $(-\beta)$-expansion, following an analogy to (\ref{NaturalBeta})
in \cite{Dajani-Kraaikamp-Solomyak96}.
However, the dual map does not seem to be realized as an easy interval map, except for the golden ratio. See \Cref{Example:x^2-x-1}.

Since $T_{-\beta}$ is ergodic
with respect to 
a unique absolutely continuous 
invariant measure, the density  (\ref{Density}) and 
(\ref{Density2}) must coincide up to a constant multiple.
We do not have a direct combinatorial proof of this fact. 
\textcolor{blue}{In some cases, we can make the left eigenvector explicit using this coincidence.}

\begin{Prop}\label{ell=(c...)}
\textcolor{red}{
If $\beta>(1+\sqrt{5})/2$ and 
$T^{2m-1}(\ell_{\beta})\notin I_{b_1}$ for $m\in \N$,} then the left eigenvector is
$$\left(\frac{\beta^2-\beta-1}{\beta^2-1},\frac{1}{\beta}, \frac{1}{\beta^2},\dots\right).$$
\end{Prop}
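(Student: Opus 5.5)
Under the hypothesis $T^{2m-1}_{-\beta}(\ell_\beta^+)\notin I_{b_1}$ for all $m$, every odd state $m$ falls into Case I of \Cref{How to construct the Markov diagram}. The diagram is therefore of the simplest shape: uparrows $m\xrightarrow{b_{m+1}}m+1$, downarrows from every state only to $0$ or (from odd states and the self-loop at $1$) to $1$, and no arrows $s\to n$ with $n\ge 2$. So the plan is to verify the left eigenvector equation $\ell A_{-\beta}=\beta\ell$ directly, column by column, and then invoke uniqueness from \Cref{GPFT}(c) (recurrence and irreducibility come from \Cref{Prop: irreducible aperiodic and recurrent}).

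First, for columns $n\ge 2$: the only arrow into state $n$ is the uparrow from $n-1$, since no downarrow lands on a state $\ge 2$. Hence $(\ell A_{-\beta})_n=\ell_{n-1}$, and the equation $\ell_{n-1}=\beta\ell_n$ holds for $\ell_n=\beta^{-n}$, $n\ge1$.

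Next, column $1$: the incoming arrows are the uparrow from $0$ (multiplicity $1$), the self-loop at $1$, and the arrows $m\xrightarrow{b_1}1$ for each odd $m\ge3$. Note that the self-loop is also of the form ``odd $m$ with label $b_1$'' for $m=1$. The equation becomes
\[
\ell_0+\sum_{m\ \mathrm{odd}}\beta^{-m}=\beta\cdot\beta^{-1}=1,
\]
i.e. $\ell_0=1-\beta/(\beta^2-1)=(\beta^2-\beta-1)/(\beta^2-1)$. This matches the claim and is positive exactly when $\beta>(1+\sqrt5)/2$, which explains that hypothesis.

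Finally, column $0$: $b_1\ell_0+\sum_{m\ge1}c_m\beta^{-m}=\beta\ell_0$, where $c_m$ is the number of arrows $m\to0$. From the theorem, $c_m=b_{m+1}$ for even $m$ and $c_m=b_1-1-b_{m+1}$ for odd $m$. Since $\ell$ is determined up to scaling by the other columns and \Cref{GPFT} guarantees a positive left eigenvector exists, this column must hold automatically. Still, I would check it as a consistency identity, and this is the main obstacle. The sum $\sum_m c_m\beta^{-m}$ must be expressed through the value $\ell_\beta=\sum b_i(-\beta)^{-i}=-\beta/(\beta+1)$. I would split it into even and odd $m$, rewrite $\sum_{m}(-1)^{m}b_{m+1}\beta^{-m}=-\beta\,\ell_\beta$, and evaluate the remaining geometric part $(b_1-1)\sum_{m\ \mathrm{odd}}\beta^{-m}$. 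The identity should then reduce to an algebraic identity in $\beta$ using $b_1=\lfloor\beta\rfloor$.

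If that reduction proves messy, a cleaner alternative is the uniqueness argument itself. Columns $n\ge1$ already force $\ell$ up to scalar (with $\ell_1=1/\beta$ fixing $\ell_0$), and a positive eigenvector exists, so column $0$ follows. One could also cross-check against the density \eqref{Density} via \eqref{Density2}.
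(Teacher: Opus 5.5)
Your proposal is correct, but it follows a different route from the paper. The paper never writes down the eigen-equations. It uses the explicit floors $R_n=[\ell_\beta,T^n_{-\beta}(\ell_\beta)]$ ($n$ odd) and $R_n=[T^n_{-\beta}(\ell_\beta),r_\beta]$ ($n$ even) to rewrite the alternating Ito--Sadahiro density (\ref{Density}) as $1-\sum_{n\ \mathrm{odd}}\beta^{-n}+\sum_{n\ge1,\,x\in R_n}\beta^{-n}$, trading each negative odd term for the complementary floor. It then reads off $\ell$ by comparison with (\ref{Density2}), since both are densities of the unique a.c.i.m.

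You instead verify $\ell A_{-\beta}=\beta\ell$ column by column from Theorem~\ref{How to construct the Markov diagram}. Columns $n\ge1$ fix $\ell$ up to a scalar. Column $0$ then follows because Proposition~\ref{GPFT}(b) supplies a positive left eigenvector, which must be a multiple of yours; only existence is used, not uniqueness.

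Your argument is more self-contained. The paper's comparison tacitly needs that the $\ell_n$ are determined by the function $x\mapsto\sum_{n\ge0,\,x\in R_n}\ell_n$. That is false in general when floors coincide: in Example~\ref{Example:x^2-kx-1} all odd floors equal $[\ell_\beta,T_{-\beta}(\ell_\beta)]$. Your route also shows directly that the golden-mean threshold is exactly the positivity of $\ell_0$. The paper's route, in turn, explains the constant $\ell_0$ as the collected negative odd-indexed terms of (\ref{Density}).

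Finally, the direct check of column $0$, which you called the main obstacle, is short. Using $\sum_{m\ge0}(-1)^m b_{m+1}\beta^{-m}=-\beta\ell_\beta=\beta^2/(\beta+1)$, the left side becomes $\beta^2/(\beta+1)-\beta/(\beta^2-1)$, while $\beta\ell_0=\beta-\beta^2/(\beta^2-1)$. Both equal $(\beta^3-\beta^2-\beta)/(\beta^2-1)$. The $b_1$-terms cancel, so $b_1=\lfloor\beta\rfloor$ is not even needed.
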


\begin{proof}
Under the assumption, we have
\begin{align*}
    R_n=
\begin{split}
\left\{
\begin{array}{ll}
\displaystyle \left[\ell_{\beta},~ T_{-\beta}^n(\ell_{\beta})\right] \quad n: \text{odd}, \\
[10pt]
\displaystyle \left[T_{-\beta}^n(\ell_{\beta}),~ r_{\beta}\right] \quad n: \text{even}.
\end{array} \right.
\end{split}
\end{align*}
Therefore, we have
\begin{align*}
    \sum_{n\ge 0,~T^n(\ell_{\beta}^+)\le x}\frac{1}{(-\beta)^n}
    &=1+\sum_{T^n(\ell_{\beta}^+)\le x,~n:odd}\frac{1}{(-\beta)^n}
    +\sum_{T^n(\ell_{\beta}^+)\le x,~n:even}\frac{1}{(-\beta)^n}\\
    &=1-\sum_{n:odd}\frac{1}{\beta^n}+\sum_{T^n(\ell_{\beta}^+)> x,~n:odd}\frac{1}{\beta^n}+\sum_{T^n(\ell_{\beta}^+)\le x,~n:even}\frac{1}{\beta^n}\\
    &=1-\sum_{n:odd}\frac{1}{\beta^n}+\sum_{n\ge 1,~x\in R_n}\frac{1}{\beta^n}.
\end{align*}
\end{proof}
See Table \ref{LeftEigenvectors} for examples of Proposition \ref{ell=(c...)}.




\section{Examples}\label{Examples}
When $d^*(\ell_{\beta},-\beta)$ is eventually periodic, 
the Markov diagram can be represented by a finite matrix. This is done by the future cover (c.f. \cite[Exer. 3.2.8]{Lind-Marcus:95}), i.e., we identify states by their future. It is an equivalence relation among the states of the Markov
diagram induced by the set of 
all possible infinite words starting from a given state. 
We denote the result of this identification by $\Ab^*_{-\beta}$.
\begin{Ex}[$\underline{x^2-x-1=0}$; $\beta=(1+\sqrt{5})/2=1.618\dots$, $d(\ell_\beta,-\beta)=1\overline{0}$]\label{Example:x^2-x-1}
\begin{align*}
\Ab_{-\beta}=
\left(
\begin{array}{c|cccccc}
1 & 1 &  & &  & & \\ \hline
 & 1 & 1 & & & & \\
 & 0 & & 1 & & &  \\
 & 1 &  & & 1 & & \\
 & 0 & & & & 1 & \\
 & 1 & & & & & \ddots \\
 & \vdots& & & & & \\
\end{array} 
\right), \quad
\Ab^*_{-\beta}=
\left(
\begin{array}{ccc}
1 & 1 &  \\
  &   & 1 \\
1 & 1 & 
\end{array} 
\right).
\end{align*}
\end{Ex}

In this Markov Diagram, once the orbit reaches state $1$, it never returns to state $0$,
\textcolor{red}{and it must be removed for irreducibility.} 
We can use the left eigenvector \textcolor{red}{$(1/\beta,1/\beta^2,\dots)$ which can be read off from Proposition \ref{ell=(c...)}.}
By the identification induced by the future cover, we obtain the left eigenvector $(1/\beta,1/\beta,1/\beta^2)$ of $A^*_{-\beta}$.
It follows from the proof of \Cref{Th:left and right eigenvector} that we have
\[
y^*=
\begin{cases}
[0^{n}c_1c_2\cdots]_{-\beta},
& d_1(x)=0,\\
[0^{n-1}1c_1c_2\cdots]_{-\beta},
& d_1(x)=1 ~(n\text{ is odd}),
\end{cases}
\]
for $(x,y)$ in $\mathcal{H}\setminus \mathcal{R}_{0}=\bigcup_{n\ge 1}\mathcal{R}_n$. 
Consider the associated one-sided shift 
$X_{\bf dual}$, i.e., the closure of all infinite words
associated with these dual coordinates.
For $\beta=(1+\sqrt5)/2$, its forbidden words coincide 
with 
$\{10^{2k-1}1\}_{k\ge 1}$. 
Thus, if $f: \Sigma_{\bf dual}\to [\ell_{\beta}, r_\beta]$ denotes the representation map as a power series in base $(-\beta)$, this coincides 
with the inverse of the $(-\beta)$-expansion map up to countable exceptions,
i.e., $f$ fails to be injective only on the countable set $\bigcup_{n\ge1}T_{-\beta}^{-n}(r_\beta)$.
In this manner, in each $\mathcal{R}_i$ the value is realized as the $(-\beta)$-expansion.
One can also realize this 
natural extension in $\R^2$ in the following simple form:
\begin{align*}
\mathcal{T}_{-\beta}:(x,y) \mapsto 
\begin{cases}
    \left(T_{-\beta}(x), \displaystyle -\frac{1+y}{\beta}\right), & (x,y)\in\mathcal{X}_0\cup\mathcal{X}_2, \\
    \left(T_{-\beta}(x), \displaystyle -\frac{y}{\beta}\right), & (x,y)\in\mathcal{X}_1, \\
\end{cases}
\end{align*}
which is defined on $\mathcal{X}=\mathcal{X}_0\cup \mathcal{X}_1 \cup \mathcal{X}_2\subset [\ell_{\beta}, r_{\beta}]^2$
with 
$\mathcal{X}_0=[\ell_{\beta},0]\times[\ell_{\beta},0],~
\mathcal{X}_1=[0, r_{\beta}]\times[\ell_{\beta},0],~
\mathcal{X}_2=[\ell_{\beta},0]\times[0, r_{\beta}]$.
See \Cref{fig:NE_Example_3}. 

\begin{figure}[h]
    \centering
    \begin{tikzpicture}[
  x=5.2cm, y=5.2cm, >=Latex,
  line/.style={draw=black, thick},
  lab/.style={font=\small}]
\pgfmathsetmacro{\b}{(1+sqrt(5))/2}      
\pgfmathsetmacro{\lb}{-\b/(\b+1)}        
\pgfmathsetmacro{\rb}{1/(\b+1)}          
\pgfmathsetmacro{\c}{\rb-1/\b}           

\begin{scope}
  
  \draw[line] (0,\lb) -- (0,\rb);
  \draw[line] (\lb,0) -- (\rb,0);

  \shade[shading=axis, left color=green!5, right color=green!50, shading angle=45]
  (\lb,\lb) rectangle (\c,0); 
  
   \shade[shading=axis, left color=green!5, right color=green!50, shading angle=45]
  (\c,\lb) rectangle (0,0); 

  \shade[shading=axis, left color=red!5, right color=red!50, shading angle=45]
  (0,\lb) rectangle (\rb,0); 
  
  \shade[shading=axis, left color=yellow!5, right color=yellow!50, shading angle=45]
  (\c,0) rectangle (0,\rb); 

  \shade[shading=axis, left color=yellow!5, right color=yellow!50, shading angle=45]
  (\lb,0) rectangle (\c,\rb); 

  \draw[line] (\lb,\lb) -- (\lb,\rb) -- (0,\rb) -- (0,0) -- (\rb,0) -- (\rb,\lb) -- (\lb,\lb);
  \draw[line] (0,\lb) -- (0,0);
  \draw[line] (\lb,0) -- (0,0);
  \draw[densely dashed, gray, thick] (\c,\lb) -- (\c,\rb);

  \node[lab] at ({(\lb+0)/2},{(\lb+0)/2}) {$\mathcal{X}_0$};
  \node[lab] at ({(0+\rb)/2},{(\lb+0)/2}) {$\mathcal{X}_1$};
  \node[lab] at ({(\lb+0)/2},{(0+\rb)/2}) {$\mathcal{X}_2$};
\end{scope}

\draw[->, thick] (\rb+0.1, \lb/2+\rb/2) -- (\rb+0.5,\lb/2+\rb/2);
\node[lab, above] at (\rb+0.3,\lb/2+\rb/2) {$\mathcal{T}_{-\beta}$};

\begin{scope}[shift={(\rb+1.2,0)}]

  \shade[shading=axis, left color=green!50, right color=green!5, shading angle=30]
  (\lb,-1/\b^2) rectangle (0,0); 

  \shade[shading=axis, left color=green!50, right color=green!5, shading angle=30]
  (0,0) rectangle (\rb,-1/\b^2); 
  
  \shade[shading=axis, left color=yellow!50, right color=yellow!5, shading angle=30]
  (\lb,\lb) rectangle (0,-1/\b^2); 
  
  \shade[shading=axis, left color=yellow!50, right color=yellow!5, shading angle=30]
  (0,\lb) rectangle (\rb,-1/\b^2); 

  \shade[shading=axis, left color=red!50, right color=red!5, shading angle=30]
  (\lb,0) rectangle (0,\rb); 

  \draw[line] (\lb,\lb) -- (\lb,\rb) -- (0,\rb) -- (0,0) -- (\rb,0) -- (\rb,\lb) -- (\lb,\lb);
  \draw[line] (\lb,0) -- (\rb,0);
  \draw[line] (\lb,-1/\b^2) -- (\rb,-1/\b^2);

  \node[lab] at ({(\lb+0)/2},{(0+\rb)/2}) {$\mathcal{T}_{-\beta}(\mathcal{X}_1)$};
  \node[lab] at ({(\lb+\rb)/2},{(-1/\b^2+0)/2}) {$\mathcal{T}_{-\beta}(\mathcal{X}_0)$};
  \node[lab] at ({(\lb+\rb)/2},{(\lb-1/\b^2)/2}) {$\mathcal{T}_{-\beta}(\mathcal{X}_2)$};
\end{scope}
\end{tikzpicture}
    \caption{The natural extension of the $(-\beta)$-transformation when $\beta=(1+\sqrt5)/2$. (The dashed line is $x=r_{\beta}-1/\beta$.)}
    \label{fig:NE_Example_3}
\end{figure}

\begin{Ex}[$\underline{x^2-kx-1=0}$; $d(\ell_\beta,-\beta)=k\overline{k-1}$]\label{Example:x^2-kx-1}
\begin{align*}
\Ab^*_{-\beta}=
\left(
\begin{array}{ccc}
k & 1 &  \\
  & 1 & 1 \\
k-1 & 1 & 
\end{array} 
\right).
\end{align*}
\end{Ex}
We can see that $\left((k-1)/k,~1/k,~1/(k\beta)\right)$ and $\left(1,~1/\beta,~1-1/\beta\right)^{T}$ are, respectively, a left eigenvector and a right eigenvector of $A^*_{-\beta}$. Thus, as in the example for the golden ratio, we can explicitly describe the translation $\alpha_{x,y}$ in the second coordinate of the natural extension $\mathcal{T}_{-\beta}(x,y)= 
(T_{-\beta}(x), \displaystyle -y/\beta+\alpha_{x,y})$ on the set $\mathcal{X}=\bigcup\mathcal X_i$ determined by these eigenvectors. Here, $\alpha_{x,y}$ is a piecewise constant term determined by the pair
$(\mathcal{X}_i,d_1(x))$ such that $(x,y)\in\mathcal{X}_i$. In this realization of the natural extension on $\mathcal{X}$, we want to determine the backward orbit on the dual side only by the value of $y$. 
Note that, in such a realization, the alternating order in the symbolic representations of $y^*$ no longer reflects the numerical order of the corresponding dual values.

\begin{figure}[h]
    \centering
    \begin{tikzpicture}[
  x=5.2cm, y=5.2cm, >=Latex,
  line/.style={draw=black, thick},
  lab/.style={font=\small}]
\pgfmathsetmacro{\b}{(3+sqrt(13))/2}      
\pgfmathsetmacro{\lb}{-\b/(\b+1)}        
\pgfmathsetmacro{\rb}{1/(\b+1)}          

\pgfmathsetmacro{\t}{-\b*\lb-3}           

\pgfmathsetmacro{\k}{\rb-2/(3*\b)}   
\pgfmathsetmacro{\kk}{\rb-2/(3*\b)-1/(3*\b^2)}   
\pgfmathsetmacro{\kkk}{\lb+2/(3*\b^2)+4/(3*\b)}   
\pgfmathsetmacro{\kkkk}{\lb+2/(3*\b^2)+2/(3*\b)}   
\pgfmathsetmacro{\kkkkk}{\lb+2/(3*\b^2)}   
\pgfmathsetmacro{\kkkkkk}{\lb+1/(3*\b^2)}   

\pgfmathsetmacro{\c}{\rb-1/\b}           
\pgfmathsetmacro{\cc}{\rb-2/\b}          
\pgfmathsetmacro{\ccc}{\rb-3/\b}         

\begin{scope}

  \shade[shading=axis, left color=green!5, right color=green!50, shading angle=45]
  (\lb,\lb) rectangle (\ccc,\lb+2/3); 

  \shade[shading=axis, left color=green!5, right color=green!50, shading angle=45]
  (\ccc,\lb) rectangle (\cc,\lb+2/3); 

  \shade[shading=axis, left color=green!5, right color=green!50, shading angle=45]
  (\cc,\lb) rectangle (\c,\lb+2/3); 

   \shade[shading=axis, left color=green!5, right color=green!50, shading angle=45]
  (\c,\lb) rectangle (\rb,\lb+2/3); 

  \shade[shading=axis, left color=red!5, right color=red!50, shading angle=45]
  (\lb,\lb+2/3) rectangle (\ccc,\rb); 

  \shade[shading=axis, left color=red!5, right color=red!50, shading angle=45]
  (\ccc,\lb+2/3) rectangle (\t,\rb); 
  
  \shade[shading=axis, left color=yellow!5, right color=yellow!50, shading angle=45]
  (\t,\lb+2/3) rectangle (\cc,\kk); 

  \shade[shading=axis, left color=yellow!5, right color=yellow!50, shading angle=45]
  (\cc,\lb+2/3) rectangle (\c,\kk); 

  \shade[shading=axis, left color=yellow!5, right color=yellow!50, shading angle=45]
  (\c,\lb+2/3) rectangle (\rb, \kk); 

  \draw[line] (\lb,\lb) -- (\lb,\lb+2/3) -- (\rb,\lb+2/3) -- (\rb,\lb) -- (\lb,\lb); 
  \draw[line] (\lb,\lb+2/3) -- (\lb,\rb) -- (\t,\rb) -- (\t,\lb+2/3); 
  \draw[line] (\t,\kk) -- (\rb,\kk) -- 
  (\rb,\lb+2/3);  
  
  \draw[densely dashed, gray, thick] (\c,\lb) -- (\c,{\lb+2/3});
  \draw[densely dashed, gray, thick] (\cc,\lb) -- (\cc,{\lb+2/3});
  \draw[densely dashed, gray, thick] (\ccc,\lb) -- (\ccc,\rb);

  \draw[densely dashed, gray, thick] (\c,\lb+2/3) -- (\c,\kk);
  \draw[densely dashed, gray, thick] (\cc,\lb+2/3) -- (\cc,\kk);

  \node[lab] at ({(\lb+\rb)/2},{(\lb+\lb+2/3)/2}) {$\mathcal{X}_0$};
  \node[lab] at ({(\lb+\t)/2},{(\lb+2/3+\rb)/2}) {$\mathcal{X}_1$};
  \node[lab] at ({(\t+\rb)/2},{(\lb+2/3+\kk)/2}) {$\mathcal{X}_2$};
\end{scope}

\draw[->, thick] (\rb+0.1, \lb/2+\rb/2) -- (\rb+0.35,\lb/2+\rb/2);
\node[lab, above] at (\rb+0.23,\lb/2+\rb/2) {$\mathcal{T}_{-\beta}$};

\begin{scope}[shift={(\rb+1.2,0)}]


  \shade[shading=axis, left color=red!50, right color=red!5, shading angle=30]
  (\lb,\lb+2/3) rectangle (\t,\kk); 

  \shade[shading=axis, left color=red!50, right color=red!5, shading angle=30]
  (\t,\lb+2/3) rectangle (\rb,\kk); 

  \shade[shading=axis, left color=green!50, right color=green!5, shading angle=30]
  (\lb,\k) rectangle (\t,\rb); 
  
   \shade[shading=axis, left color=yellow!50, right color=yellow!5, shading angle=30]
  (\lb,\kk) rectangle (\t,\k); 

  \shade[shading=axis, left color=green!50, right color=green!5, shading angle=30]
   (\lb,\kkk) rectangle (\rb,\lb+2/3);

  \shade[shading=axis, left color=green!50, right color=green!5, shading angle=30]
  (\lb,\kkkk) rectangle (\rb,\kkk); 

  \shade[shading=axis, left color=green!50, right color=green!5, shading angle=30]
  (\lb,\kkkkk) rectangle (\rb,\kkkk); 

  \shade[shading=axis, left color=yellow!50, right color=yellow!5, shading angle=30]
  (\lb,\kkkkkk) rectangle (\rb,\kkkkk); 

  \shade[shading=axis, left color=yellow!50, right color=yellow!5, shading angle=30]
  (\lb,\lb) rectangle (\rb,\kkkkkk); 

  \draw[line] (\lb,\lb) -- (\lb,\lb+2/3) -- (\rb,\lb+2/3) -- (\rb,\lb) -- (\lb,\lb); 
  \draw[line] (\lb,\lb+2/3) -- (\lb,\rb) -- (\t,\rb) -- (\t,\lb+2/3); 
  \draw[line] (\t,\kk) -- (\rb,\kk) -- 
  (\rb,\lb+2/3);  
  
  \draw[line] (\lb,\k) -- (\t,\k);
  \draw[line] (\lb,\kk) -- (\t,\kk);
  \draw[line] (\lb,\kkk) -- (\rb,\kkk);
  \draw[line] (\lb,\kkkk) -- (\rb,\kkkk);
  \draw[line] (\lb,\kkkkk) -- (\rb,\kkkkk);
  \draw[line] (\lb,\kkkkkk) -- (\rb,\kkkkkk);

  \node[lab] at ({(\t+\rb)/2},{(\lb+2/3+\kk)/2}) {$\mathcal{T}_{-\beta}(\mathcal{X}_1|d_1(x)=2)$};
  \node[lab] at ({(\lb+\t)/2},{(\lb+2/3+\kk)/2}) {$\mathcal{T}_{-\beta}(\mathcal{X}_1|3)$};
  \node[lab] at ({(\lb+\t)/2},{(\rb+\k)/2}) {$\mathcal{T}_{-\beta}(\mathcal{X}_0|3)$};
  \node[lab] at ({(\lb+\rb)/2},{(\kkk+\lb+2/3)/2}) {$\mathcal{T}_{-\beta}(\mathcal{X}_0|d_1(x)=0)$};
  \node[lab] at ({(\lb+\rb)/2},{(\kkkk+\kkk)/2}) {$\mathcal{T}_{-\beta}(\mathcal{X}_0|d_1(x)=1)$};
  \node[lab] at ({(\lb+\rb)/2},{(\kkkkk+\kkkk)/2}) {$\mathcal{T}_{-\beta}(\mathcal{X}_0|d_1(x)=2)$};
  
\end{scope}
\end{tikzpicture}
    \caption{The natural extension of the $(-\beta)$-transformation when $k=3$. (The dashed lines are $x=r_{\beta}-1/\beta$, $x=r_{\beta}-2/\beta$ and $x=r_{\beta}-3/\beta$.)}
    \label{fig:NE_Example_4}
\end{figure}

\begin{Ex}[$\underline{x^n-x^{n-1}\cdots-x-1=0}$]\label{Example:x^n-x^{n-1}...-x-1}
\end{Ex}
This example is discussed in \cite{Kalle14}. In that paper, Kalle shows that, for the multinacci numbers $\beta_n$, the transformations $T_{\beta}$ and $T_{-\beta}$ are measurably isomorphic, and for $\beta_{n-1}<\beta<\beta_{n}$, they are not measurably isomorphic. The corresponding
natural extensions
in this case are shown in Figures~7 and~8 in \cite{Kalle14}.

\begin{Ex}[$\underline{x^3-3x^2+x-1=0}$; $\beta=2.77\dots$, $d(\ell_\beta, -\beta)=\overline{201}$
]\label{Example:x^3-3x^2+x-1}
\end{Ex}
Because $d(\ell_\beta, -\beta)$ is purely periodic with odd period, to get the correct Markov diagram from \Cref{How to construct the Markov diagram}, we have to use $d^*(\ell_\beta, -\beta)=\overline{2000}$ (using \Cref{OddCycle}).
\begin{align*}
\Ab^{*}_{\overline{201}}=
\left(
\begin{array}{ccc|cccccc}
\SB{2} & \SB{1} &  &&&&&&\\
\SB{1} & \SB{1} & \SB{1} &&&&&&\\
\SB{1} &  && 1 &&&&& \\ \hline
 &&&& 1 &&&& \\
 &&&&& 1 &&& \\
 &&&&&& 1 && \\
 &&&&&&& 1 & \\
 &&&&&&&& 1  \\
 &&& 1&&&&&\\
\end{array} 
\right), \quad 
\Ab^{*}_{\overline{2000}}=
\left(
\begin{array}{ccccccc}
2 & 1 & & & & & \\
1 & 1 & 1 & & & & \\
  & & & 1 & & & \\
1 & 1 & & & 1 & & \\
2 & & & & & 1 & \\
1 & 1 & & & & & 1 \\
  & & & 1 & & & 
\end{array} 
\right).
\end{align*}

The characteristic polynomials of $\Ab^{*}_{\overline{201}}$ and $\Ab^{*}_{\overline{2000}}$ are
\begin{align*}
&(x-1)(x+1)(x^2-x+1)(x^2+x+1)\SB{(x^3-3x^2+x-1)},\\ 
&-x^4(x^3-3x^2+x-1),
\end{align*}
respectively. 
\textcolor{red}{
They have the same Perron-Frobenius eigenvalue, but the eigenvector of 
$\Ab^{*}_{\overline{201}}$ is not suitable for our construction.}

In the case where $d(\ell_{\beta},-\beta)$ is periodic, we can obtain the left eigenvector explicitly by solving a system of linear equations, see \Cref{LeftEigenvectors}.
\begin{table}[htbp]
\centering
\begin{tabular}{|c||c|c|}\hline
Example & equation & $\ell=(\ell_0, \ell_1, \ell_2,\dots), c=\frac{\beta^2-\beta-1}{\beta^2-1}$
\\ \hline\hline
\ref{Example:x^2-x-1} & $x^2-x-1=0$ &$(0,1/\beta, 1/\beta^2,\dots)$\\\hline
\ref{Example:x^2-kx-1}  & $x^2-kx-1=0$ &\SR{$(c,1/\beta, 1/\beta^2,\dots)$}\\\hline
\multirow{2}{*}{\ref{Example:x^n-x^{n-1}...-x-1}}
& $x^n-x^{n-1}\cdots-x-1=0, (n:even)$
& \SR{$(c,1/\beta, 1/\beta^2,\dots)$}
\\ \cline{2-3}
& $x^n-x^{n-1}\cdots-x-1=0, (n:odd)$ 
&
\begin{tabular}{c}
$\displaystyle (1,a,b,b/\beta,b/\beta^2,\dots)$, \\ 
where $a=L_{01}(1/\beta), b=L_{02}(1/\beta)$
\end{tabular}
\\ \hline
\ref{Example:x^3-3x^2+x-1} & $x^3-3x^2+x-1=0$&\SR{$(c,1/\beta, 1/\beta^2,\dots)$}\\\hline
\end{tabular}
\caption{The left eigenvector when $d(\ell_{\beta},-\beta)$ is periodic.}
\label{LeftEigenvectors}
\end{table}

Finally, we explain the non-periodic case using \Cref{Example:1^k0^k} and \ref{Example:1 to 101 and 0 to 1}. In \Cref{Fig:NE_Example1and2}, the arrows are determined by the Markov diagram, whose incidence matrix is given below:
\begin{align*}
\Ab_{-\beta}=
\left(
\begin{array}{ccccccccccccc}
1 & 1 & 0 & 0 & 0 & 0 & 0 & 0 & 0 & 0 & 0 & 0 \\
0 & 1 & 1 & 0 & 0 & 0 & 0 & 0 & 0 & 0 & 0 & 0 \\
1 & 0 & 0 & 1 & 0 & 0 & 0 & 0 & 0 & 0 & 0 & 0 \\
0 & 0 & 0 & 0 & 1 & 0 & 0 & 0 & 0 & 0 & 0 & 0 \\
0 & 0 & 0 & 0 & 0 & 1 & 0 & 0 & 0 & 0 & 0 & 0 \\
0 & 0 & 0 & 1 & 0 & 0 & 1 & 0 & 0 & 0 & 0 & 0 \\ 
1 & 0 & 0 & 0 & 0 & 0 & 0 & 1 & 0 & 0 & 0 & 0 \\
0 & 0 & 0 & 0 & 0 & 0 & 0 & 0 & 1 & 0 & 0 & 0 \\
0 & 0 & 1 & 0 & 0 & 0 & 0 & 0 & 0 & 1 & 0 & 0 \\
0 & 1 & 0 & 0 & 0 & 0 & 0 & 0 & 0 & 0 & 1 & 0 \\
0 & 0 & 0 & 0 & 0 & 0 & 0 & 0 & 0 & 0 & 0 & 1 \\
0 & 1 & 0 & 0 & 0 & 0 & 0 & 0 & 0 & 0 & 0 & 0 \\
  &   &   &   &   &   &   &   &   &   &   &   & \ddots
\end{array} 
\right)
\end{align*}
and
\begin{align*}
\Ab_{-\beta}=
\left(
\begin{array}{ccccccccccccc}
1 & 1 & 0 & 0 & 0 & 0 & 0 & 0 & 0 & 0 & 0 & 0 \\
0 & 1 & 1 & 0 & 0 & 0 & 0 & 0 & 0 & 0 & 0 & 0 \\
1 & 0 & 0 & 1 & 0 & 0 & 0 & 0 & 0 & 0 & 0 & 0 \\
0 & 0 & 0 & 0 & 1 & 0 & 0 & 0 & 0 & 0 & 0 & 0 \\
0 & 0 & 1 & 0 & 0 & 1 & 0 & 0 & 0 & 0 & 0 & 0 \\
0 & 1 & 0 & 0 & 0 & 0 & 1 & 0 & 0 & 0 & 0 & 0 \\ 
1 & 0 & 0 & 0 & 0 & 0 & 0 & 1 & 0 & 0 & 0 & 0 \\
0 & 0 & 0 & 0 & 0 & 0 & 0 & 0 & 1 & 0 & 0 & 0 \\
0 & 0 & 0 & 0 & 0 & 0 & 0 & 0 & 0 & 1 & 0 & 0 \\
0 & 0 & 0 & 0 & 0 & 0 & 0 & 0 & 0 & 0 & 1 & 0 \\
0 & 0 & 0 & 0 & 0 & 0 & 0 & 0 & 0 & 0 & 0 & 1 \\
0 & 0 & 0 & 0 & 0 & 1 & 0 & 0 & 0 & 0 & 0 & 0 \\
  &   &   &   &   &   &   &   &   &   &   &   & \ddots
\end{array} 
\right).
\end{align*}

By \Cref{Th:left and right eigenvector}, we know the closed formula for horizontal lengths of the rectangles. 
We computed the vertical thicknesses by using a truncation to the first 12 states
in the figures.

Since our natural extension visualizes the return time to the cylinder, when an arrow lands on a floor, the corresponding floor becomes slightly thicker.
This phenomenon can be observed in both figures. 
In \Cref{Example:1^k0^k}, there is an arrow from the $5$-th floor to the $3$-rd floor. As a result, the $3$-rd floor is slightly thicker than the corresponding floor in \Cref{Example:1 to 101 and 0 to 1}. 
Similarly, in \Cref{Example:1 to 101 and 0 to 1}, there is an arrow from the $4$-th floor to the $2$-nd floor, and then the $2$-nd floor is slightly thicker than the corresponding floor in \Cref{Example:1^k0^k}.

To facilitate comparison of differences in thickness, the bases of the rectangles in each example are aligned
in \Cref{Fig:NE_Example1and2}. As a consequence, the \textcolor{blue}{vertical} gaps between the rectangles are not uniform.
In \Cref{Fig:NE_Example1and2}, the $9$-th floor on the right appears to touch the dashed line, but it lies entirely to the left of the dashed line.
One may \textcolor{blue}{wonder whether the $1$-st floor is always} thicker than the $0$-th floor for non-periodic cases. However, this phenomenon does not occur when arrows from the first few floors land on the $0$-th floor.

\medskip
{\bf Acknowledgements.}
\medskip

For the description of the Markov diagram in our setting, we are indebted to a detailed note by Ken'ichiro Yamamoto. 

\newpage
\bibliographystyle{siam}  
\bibliography{lit}
\end{document}

\begin{figure}[htbp]
\centering
\begin{tikzpicture}

\pgfmathsetmacro{\B}{(3+sqrt(13))/2}
\pgfmathsetmacro{\L}{-5*\B/(\B+1)}
\pgfmathsetmacro{\R}{5/(\B+1)}

\draw[thin] (\R,\L) -- (\L,\L) node[below] {$\ell_\beta$};
\draw[thin] (\L,\L) -- (\L,\R);
\draw[very thin,gray,dashed] (\R,\R) -- (\L,\R);
\draw[very thin,gray,dashed] (\R,\L) -- (\R,\R);
\draw[->] ({\L-0.2},0) -- ({\R+0.2},0);
\draw[->] (0,{\L-0.2}) -- (0,{\R+0.2});

\draw[very thin,gray,dashed] (\R-5/\B,\L) -- (\R-5/\B,\R);
\draw[very thin,gray,dashed] (\R-10/\B,\L) -- (\R-10/\B,\R);
\draw[very thin,gray,dashed] (\R-15/\B,\L) -- (\R-15/\B,\R);

\pgfmathsetmacro{\Ra}{\R-5/\B}
\pgfmathsetmacro{\Rb}{\R-10/\B}
\pgfmathsetmacro{\Rc}{\R-15/\B}

\pgfmathsetmacro{\Da}{\R-5*2/(3*\B)}
\pgfmathsetmacro{\Db}{\R-5*4/(3*\B)}
\pgfmathsetmacro{\Dc}{\R-5*6/(3*\B)}
\pgfmathsetmacro{\Dd}{\R-30/(3*\B)-5/(3*\B^2)}
\pgfmathsetmacro{\De}{\R-30/(3*\B)-10/(3*\B^2)}
\pgfmathsetmacro{\Df}{\R-5*2/3-5*2/(3*\B)}
\pgfmathsetmacro{\Dg}{\R-5*2/3-5*2/(3*\B)-5*1/(3*\B)}

\draw[green, thick] (\R,\L) -- (\Da,\L+5*2/3);
\draw[green, thick] (\Da,\L) -- (\Db,\L+5*2/3);
\draw[green, thick] (\Db,\L) -- (\Dc,\L+5*2/3);
\draw[yellow, thick] (\Dc,{\L+5*2/3+5*2/(3*\B)}) -- (\Dd,{\L+5*2/3+5*2/(3*\B)+5*1/(3*\B)});
\draw[yellow, thick] (\Dd,{\L+5*2/3+5*2/(3*\B)}) -- (\De,{\L+5*2/3+5*2/(3*\B)+5*1/(3*\B)});
\draw[green, thick] (\R-5*2/3,\L) -- (\Df,\L+5*2/3);
\draw[red, thick] (\Df,\L+5*2/3) -- (\Dg,\R);
\draw[yellow, thick] (\Dg,{\L+5*2/3+5*2/(3*\B)}) -- (\L,{\L+5*2/3+5*2/(3*\B)+5*1/(3*\B)});

\node[below] at (\R,\L) {$r_\beta$};
\end{tikzpicture}
\caption{The graph of the local inverse branch of dual map}
\label{graph-T-beta}
\end{figure}

\begin{figure}[htbp]
    \centering
    \begin{tikzpicture}[
  x=5.2cm, y=5.2cm, >=Latex,
  line/.style={draw=black, thick},
  lab/.style={font=\small}]
\pgfmathsetmacro{\b}{1.92756}      
\pgfmathsetmacro{\lb}{-\b/(\b+1)}        
\pgfmathsetmacro{\rb}{1/(\b+1)}          
\pgfmathsetmacro{\c}{\rb-1/\b}           
\pgfmathsetmacro{\f}{(\b^2-\b-1)/(\b^2-1)}  
\pgfmathsetmacro{\k}{\b^2/(\b^2+1)}    

\begin{scope}

  \shade[shading=axis, left color=red!5, right color=red!50, shading angle=45]
  (\lb,\lb) rectangle (\c,\lb+\f*\k); 
  
   \shade[shading=axis, left color=red!5, right color=red!50, shading angle=45]
  (\c,\lb) rectangle (\rb,\lb+\f*\k); 

  \shade[shading=axis, left color=orange!5, right color=orange!50, shading angle=45]
  (\lb,\lb+\f*\k) rectangle (\c,\lb+\f*\k+\k/\b); 

   \shade[shading=axis, left color=orange!5, right color=orange!50, shading angle=45]
  (\c,\lb+\f*\k) rectangle (-\b*\lb-1,\lb+\f*\k+\k/\b); 
  
  \shade[shading=axis, left color=yellow!5, right color=yellow!50, shading angle=45]
  (\b^2*\lb+\b,\lb+\f*\k+\k/\b) rectangle (\c,\lb+\f*\k+\k/\b+\k/\b^2); 

  \shade[shading=axis, left color=yellow!5, right color=yellow!50, shading angle=45]
  (\c,\lb+\f*\k+\k/\b) rectangle (\rb,\lb+\f*\k+\k/\b+\k/\b^2); 

   \shade[shading=axis, left color=green!5, right color=green!50, shading angle=45]
  (\lb,\lb+\f*\k+\k/\b+\k/\b^2) rectangle (\c,\lb+\f*\k+\k/\b+\k/\b^2+\k/\b^3); 

  \shade[shading=axis, left color=green!5, right color=green!50, shading angle=45]
  (\c,\lb+\f*\k+\k/\b+\k/\b^2) rectangle 
  (-\b^3*\lb-\b^2-1,\lb+\f*\k+\k/\b+\k/\b^2+\k/\b^3); 

  \shade[shading=axis, left color=blue!5, right color=blue!50, shading angle=45]
  (-\b^3*\lb-\b^2-1,\lb+\f*\k+\k/\b+\k/\b^2) rectangle 
  (\rb,{\lb+\f*\k+\k/\b+\k/\b^2+\k/(\b^4-\b^2)}); 

  \shade[shading=axis, left color=purple!5, right color=purple!50, shading angle=45]
  (\lb,{\rb-\k/(\b^5-\b^3)}) rectangle (\c,\rb); 

  \shade[shading=axis, left color=purple!5, right color=purple!50, shading angle=45]
  (\c,{\rb-\k/(\b^5-\b^3)}) rectangle (-\b^3*\lb-\b^2-1,\rb); 

  \draw[line] (\lb,\lb) -- (\lb,\lb+\f*\k) -- (\rb,\lb+\f*\k)  -- (\rb,\lb) -- (\lb,\lb);
  
  \draw[line] (\lb,\lb+\f*\k) -- (\lb,\lb+\f*\k+\k/\b) -- (-\b*\lb-1,\lb+\f*\k+\k/\b) -- (-\b*\lb-1,\lb+\f*\k);
  
  \draw[line] (\b^2*\lb+\b,\lb+\f*\k+\k/\b) -- (\b^2*\lb+\b,\lb+\f*\k+\k/\b+\k/\b^2) -- (\rb,\lb+\f*\k+\k/\b+\k/\b^2) -- (\rb,\lb+\f*\k+\k/\b) -- (-\b*\lb-1,\lb+\f*\k+\k/\b);

   \draw[line] (\lb,\lb+\f*\k+\k/\b+\k/\b^2) -- (\lb,\lb+\f*\k+\k/\b+\k/\b^2+\k/\b^3) -- (-\b^3*\lb-\b^2-1,\lb+\f*\k+\k/\b+\k/\b^2+\k/\b^3) -- (-\b^3*\lb-\b^2-1,\lb+\f*\k+\k/\b+\k/\b^2);

   \draw[line] (\lb,\lb+\f*\k+\k/\b+\k/\b^2) -- (\rb,\lb+\f*\k+\k/\b+\k/\b^2);

  \draw[line] (-\b^3*\lb-\b^2-1,{\lb+\f*\k+\k/\b+\k/\b^2+\k/(\b^4-\b^2)}) -- (\rb,{\lb+\f*\k+\k/\b+\k/\b^2+\k/(\b^4-\b^2)}) -- (\rb,{\lb+\f*\k+\k/\b+\k/\b^2});

  \draw[line] (\lb,{\rb-\k/(\b^5-\b^3)}) -- (\lb,\rb) -- (-\b^3*\lb-\b^2-1,\rb) -- (-\b^3*\lb-\b^2-1,{\rb-\k/(\b^5-\b^3)});
  
  \draw[densely dashed, gray, thick] (\c,\lb) -- (\c,\rb);

  \node[lab] at ({(\lb+\rb)/2},{(\lb+\lb+\f*\k)/2}) {$\mathcal{X}_0$};
  \node[lab] at ({(\lb+\rb)/2},{(\lb+\f*\k+\lb+\f*\k+\k/\b)/2}) {$\mathcal{X}_1$};
  \node[lab] at ({(\b^2*\lb+\b+\rb)/2},{(\lb+\f*\k+\k/\b+\lb+\f*\k+\k/\b+\k/\b^2)/2}) {$\mathcal{X}_2$};
  \node[lab] at ({(\lb+(-\b^3*\lb-\b^2-1))/2},{(\lb+\f*\k+\k/\b+\k/\b^2+\lb+\f*\k+\k/\b+\k/\b^2+\k/\b^3)/2}) {$\mathcal{X}_3$};
  \node[lab] at ({((-\b^3*\lb-\b^2-1)+\rb)/2},{(\lb+\f*\k+\k/\b+\k/\b^2+\lb+\f*\k+\k/\b+\k/\b^2+\k/(\b^4-\b^2))/2}) {$\mathcal{X}_4$};
  \node[lab] at ({(\lb+(-\b^3*\lb-\b^2-1))/2},{(\rb-\k/(\b^5-\b^3)+\rb)/2}) {$\mathcal{X}_5$};
\end{scope}

\draw[->, thick] (\rb+0.1, \lb/2+\rb/2) -- (\rb+0.5,\lb/2+\rb/2);
\node[lab, above] at (\rb+0.3,\lb/2+\rb/2) {$\mathcal{T}_{-\beta}$};

\begin{scope}[shift={(\rb+1.2,0)}]

  \shade[shading=axis, left color=orange!50, right color=orange!5, shading angle=30]
  (\b^2*\lb+\b,\lb+\f*\k+\k/\b)  rectangle (\rb,\lb+\f*\k+\k/\b+\k/\b^2) ; 

  \shade[shading=axis, left color=yellow!50, right color=yellow!5, shading angle=30]
  (\lb,\lb+\f*\k+\k/\b+\k/\b^2)  rectangle (-\b^3*\lb-\b^2-1,\lb+\f*\k+\k/\b+\k/\b^2+\k/\b^3) ; 

  \draw[line] (\lb,\lb) -- (\lb,\lb+\f*\k) -- (\rb,\lb+\f*\k)  -- (\rb,\lb) -- (\lb,\lb);
  
  \draw[line] (\lb,\lb+\f*\k) -- (\lb,\lb+\f*\k+\k/\b) -- (-\b*\lb-1,\lb+\f*\k+\k/\b) -- (-\b*\lb-1,\lb+\f*\k);
  
  \draw[line] (\b^2*\lb+\b,\lb+\f*\k+\k/\b) -- (\b^2*\lb+\b,\lb+\f*\k+\k/\b+\k/\b^2) -- (\rb,\lb+\f*\k+\k/\b+\k/\b^2) -- (\rb,\lb+\f*\k+\k/\b) -- (-\b*\lb-1,\lb+\f*\k+\k/\b);

   \draw[line] (\lb,\lb+\f*\k+\k/\b+\k/\b^2) -- (\lb,\lb+\f*\k+\k/\b+\k/\b^2+\k/\b^3) -- (-\b^3*\lb-\b^2-1,\lb+\f*\k+\k/\b+\k/\b^2+\k/\b^3) -- (-\b^3*\lb-\b^2-1,\lb+\f*\k+\k/\b+\k/\b^2);

   \draw[line] (\lb,\lb+\f*\k+\k/\b+\k/\b^2) -- (\rb,\lb+\f*\k+\k/\b+\k/\b^2);

  \draw[line] (-\b^3*\lb-\b^2-1,{\lb+\f*\k+\k/\b+\k/\b^2+\k/(\b^4-\b^2)}) -- (\rb,{\lb+\f*\k+\k/\b+\k/\b^2+\k/(\b^4-\b^2)}) -- (\rb,{\lb+\f*\k+\k/\b+\k/\b^2});

  \draw[line] (\lb,{\rb-\k/(\b^5-\b^3)}) -- (\lb,\rb) -- (-\b^3*\lb-\b^2-1,\rb) -- (-\b^3*\lb-\b^2-1,{\rb-\k/(\b^5-\b^3)});
  

\end{scope}
\end{tikzpicture}
    \caption{The natural extension of the $(-\beta)$-transformation when $n=4$. (The dashed line is $x=r_{\beta}-1/\beta$.)}
    \label{fig:NE_Example_5}
\end{figure}

\begin{figure}[htbp]
\centering
\includegraphics[width=7.5cm]{Natural_Extension_Ex.1arrow0.png} 
\caption{Natural Extension of the $(-\beta)$-transformation for $d(\ell_\beta, -\beta)=200\sum_{k=1}^{\infty}210^k$. (The dashed lines are $x=r_{\beta}-1/\beta$ and $x=r_{\beta}-2/\beta$.)}
\label{Fig:NE_Example_1arrow0}
\end{figure}

\begin{Ex}[$\underline{x^2-(k+1)x+k-1=0}$, $(k\ge 2)$; $d(\ell_\beta,-\beta)=\overline{k(k-1)}$]\label{Example:x^2-3x+1}
\begin{align*}
\Ab_{-\beta}=
\left(
\begin{array}{cc|cccccc}
k & 1 & & & & \\ 
 & 1 & 1 & & & \\ \hline
k & & & 1 & & \\
 & 1 & & & 1 & \\
k & & & & & \ddots\\
 & 1 & & & & 
\end{array} 
\right), \quad
\Ab_{-\beta}^*=
\left(
\begin{array}{cc}
k & 1  \\ 
1 & 1 
\end{array} 
\right).
\end{align*}
\end{Ex}

Since
\begin{align*}
    R_n=
\begin{split}
\left\{
\begin{array}{ll}
\displaystyle \left(\ell_{\beta},~ T_{-\beta}(\ell_{\beta})\right) \quad \text{$n$:odd}, \\
[10pt]
\displaystyle \left(\ell_{\beta},~ \ell_{\beta}+1\right) \quad \text{$n$:even},
\end{array} \right.
\end{split}
\end{align*}
by \Cref{ell=(c...)}, the left eigenvector is
\begin{align*}
    \ell=(c,1/\beta,1/\beta^2,1/\beta^3,\dots), \quad c=\frac{\lfloor\beta\rfloor}{\beta+1}
\end{align*}

\begin{Ex}[$\underline{x^6-x-1=0}$; $\beta=1.134\dots$, $\ell_\beta=.10011\overline{100}=.100111\overline{001}$]\label{Example:x^6-x-1}
\begin{align*}
\Ab_{-\beta}=
\left(
\begin{array}{c|c|ccc|cccccc}
 1 & 1 & & & & & & & &\\ \hline
 & 1 & 1 & & & & & & &\\ \hline
 & & & 1 & & & & & &\\
 & & & & 1 & & & & &\\
 & & 1 & & & 1 & & & \\ \hline
 & & & & & 0 & 1 & & \\
 & & & & & 0 & & 1 &\\
 & & & & & 0 & & & \ddots \\
 & & & & & 0 & & & \\
 & & & & & 1 & & & \\
 & & & & & 0 & & & \\
 & & & & & \vdots & & & 
\end{array} 
\right).
\end{align*}
\end{Ex}

In this section, we give a characterization of 
the sequence that is attained as
$d(\ell_{\beta},-\beta)$. 

\begin{Prop}
\label{Self0}
Let $d(\ell_{\beta},-\beta)=c_1c_2\dots$ and $\kappa$ is
a positive integer.
If 
$fg\in \A^{2}$
satisfies
\begin{equation}
\label{AltLex0}
c_{2\kappa-1}c_{2\kappa}\prec fg
\end{equation}
then 
\begin{equation}
\label{IntervalRange0}
\frac{f}{(-\beta)^{2\kappa-1}}
+ \frac{g}{(-\beta)^{2\kappa}}+
\sum_{i=1}^{2\kappa-2} 
\frac {c_i}{(-\beta)^i}
\ge \ell_{\beta}\left(1-\frac 1{\beta^{2\kappa}}\right).
\end{equation}
The equality of $(\ref{IntervalRange0})$ is attained when and
only when $(c_i)$ is purely periodic
of period
$2\kappa-1$.
\end{Prop}

\begin{proof}
Set $f=c_{2\kappa-1}-u$, $g=c_{2\kappa}+v$.
Since 
$$
\ell_{\beta}=
\sum_{i=1}^{\infty}\frac{c_i}{(-\beta)^i},
$$
we have
\begin{equation}
\label{Eq}
\frac {f}{(-\beta)^{2\kappa-1}}
+\frac{g}{(-\beta)^{2\kappa}}
+\sum_{i=1}^{2\kappa-2} 
\frac{c_i}{(-\beta)^i}
=\ell_{\beta}-\sum_{i=2\kappa+1}^{\infty}
\frac{c_i}{(-\beta)^i} + \frac u{\beta^{2\kappa-1}} + \frac v{\beta^{2\kappa}}.
\end{equation}
When $u=0$, (\ref{AltLex0}) implies $v>0$.
(\ref{Eq})
simplifies to
$$
\frac{g}{(-\beta)^{2\kappa}}
+\sum_{i=1}^{2\kappa-1} 
\frac{c_i}{(-\beta)^i}
=\ell_{\beta}-\sum_{i=2\kappa+1}^{\infty}
\frac{c_i}{(-\beta)^i} + \frac v{(-\beta)^{2\kappa}}.
$$
Since
$$
\sum_{i=2\kappa+1}^{\infty}
\frac{c_i}{(-\beta)^i}
=\frac 1{(-\beta)^{2\kappa}}
\sum_{i=1}^{\infty}
\frac{c_{2\kappa+i}}{(-\beta)^i}
\in \frac 1{\beta^{2\kappa}} [\ell_{\beta},r_{\beta}),
$$
we have
$$
\frac{g}{(-\beta)^{2\kappa}}
+\sum_{i=1}^{2\kappa-1} 
\frac{c_i}{(-\beta)^i}
>
\ell_{\beta}-\frac{r_{\beta}}{(-\beta)^{2\kappa}}+\frac{v}{(-\beta)^{2\kappa}}\ge
\ell_{\beta}\left(1-\frac 1{\beta^{2\kappa}}\right).
$$
Similarly, in the case $u>0$, the right side of (\ref{Eq}) is
\begin{equation*}
\ell_{\beta}-\sum_{i=2\kappa}^{\infty}
\frac{c_i}{(-\beta)^i} + \frac u{\beta^{2\kappa-1}} + \frac g{\beta^{2\kappa}}
\in
\ell_{\beta}
 + \frac u{\beta^{2\kappa-1}} + \frac g{\beta^{2\kappa}}
+\frac {1}{\beta^{2\kappa-1}}
[\ell_{\beta},r_{\beta})
\end{equation*}
which is bounded from below by
\begin{align*}
&\ell_{\beta}+\frac {\ell_{\beta}}{\beta^{2\kappa-1}}+\frac{u}{\beta^{2\kappa-1}}+ \frac g{\beta^{2\kappa}}
=
\ell_{\beta}+\frac {r_{\beta}}{\beta^{2\kappa-1}}+\frac{u-1}{\beta^{2\kappa-1}}+ \frac g{\beta^{2\kappa}}\\
&=\ell_{\beta}\left(1-\frac{1}{\beta^{2\kappa}}\right)+\frac{u-1}{\beta^{2\kappa-1}}+ \frac g{\beta^{2\kappa}}
\ge
\ell_{\beta}\left(1-\frac{1}{\beta^{2\kappa}}\right).
\end{align*}
Therefore the lower bound of (\ref{IntervalRange0}) is attained only when $u=1$, $g=0$ and
$$
T_{-\beta}^{2\kappa-1}(\ell_{\beta})=
\sum_{i=1}^{\infty} \frac{c_{2\kappa+i-1}}{(-\beta)^i}=\ell_{\beta}.
$$
The equality of (\ref{IntervalRange0}) actually holds in this case. If 
$$
d(\ell_\beta,-\beta)=(c_1\dots c_{2\kappa-2}c_{2\kappa-1})^{\infty},
$$
then by Lemma \ref{OddCycle} we have
$$
d^*(r_{\beta},-\beta)=
(0c_1\dots c_{2\kappa-2}(c_{2\kappa-1}-1))^{\infty}
$$
and
$$
\sum_{i=1}^{2\kappa-2} \frac{c_i}{(-\beta)^i} + \frac {c_{2\kappa-1}-1}{(-\beta)^{2\kappa-1}} + \frac {0}{(-\beta)^{2\kappa}}
=\ell_{\beta}\left(1-\frac 1{\beta^{2\kappa}}\right).
$$
\end{proof}

Next, we give a converse statement of Proposition \ref{Self0}, which characterizes the expansion of $\ell_{\beta}$ when it is not purely periodic. 

\begin{Th}
\label{Self}
Let $e_0=0$ and $(e_i)\in \A^{\N}$. Assume the following three conditions:
\begin{enumerate}
    \item $
e_1e_2e_3\dots \preceq e_{n}e_{n+1}e_{n+2}\dots $ for  $n=1,2,3,\dots$, 
\item $(e_i)$ is not purely periodic,
\item 
There exists $\beta>1$ with $e_1=\lfloor \beta
\rfloor$ which satisfies: 
  \begin{equation}
    \label{Sol}
  -\frac{\beta}{1+\beta}=\sum_{i=1}^{\infty} \frac {e_i}{(-\beta)^i},
  \end{equation}
and if 
$fg\in \A^{2}$
satisfies
\begin{equation*}
e_{2\kappa-1}e_{2\kappa}\prec fg
\end{equation*}
then 
\begin{equation}
\label{IntervalRange}
\ell_{\beta}\left(1-\frac 1{\beta^{2\kappa}}\right)<
\frac{f}{(-\beta)^{2\kappa-1}}
+ \frac{g}{(-\beta)^{2\kappa}}+
\sum_{i=1}^{2\kappa-2} 
\frac {e_i}{(-\beta)^i}.
\end{equation}

\end{enumerate}
Then we have
\begin{equation}
\label{ExpOne}
d(\ell_{\beta},-\beta)=e_1e_2\dots.
\end{equation}
\end{Th}

Note that 
we assume the strict inequality (\ref{IntervalRange}), which
is legitimated by the last statement of Proposition \ref{Self}.

\begin{proof}
By Conditions 1 and 2, for $n\ge 2$, we find a sequence of
positive integers $(\ell_i)_{i=1,2,\dots}$ 
and $f_ig_i\in \A^2$
that
$$
e_{n}e_{n+1}\dots=
(e_1e_2\dots e_{2\ell_1-2}f_{1}g_{1}) 
(e_1e_2\dots e_{2\ell_2-2}f_{2}g_{2})
\dots
$$
and
$$
e_{2\ell_i-1}e_{2\ell_i}\prec 
f_{i}g_{i}
$$
By (\ref{IntervalRange}), we have
$$
\sum_{i=1}^{\infty}\frac {e_{n+i-1}}{(-\beta)^i}
>
\ell_{\beta}\left(
\left(1-\frac 1{\beta^{2\ell_1}}\right)+
\frac {1}{\beta^{2\ell_1}}
\left(1-\frac 1{\beta^{2\ell_2}}\right)+
\frac {1}{\beta^{2\ell_1+2\ell_2}}
\left(1-\frac 1{\beta^{2\ell_3}}\right)+
\dots \right) = \ell_{\beta}.
$$
If $e_m>0$ for all $m\ge n$ with $m\equiv n \pmod{2}$, then
we decompose
$$
e_ne_{n+1}\dots =(e_ne_{n+1})(e_{n+2}e_{n+3})\dots
$$
and using
$$
\frac{e_m}{(-\beta)}+ \frac{e_{m+1}}{(-\beta)^2}= \frac {e_{m+1}-\beta e_m}{\beta^2}<0,
$$
we see
$$
\sum_{i=1}^{\infty} \frac{e_{n+i-1}}{(-\beta)^i}<0<r_{\beta}.
$$
Assume that there exists $m\ge n$ with $m\equiv n \pmod{2}$
such that $e_m=0$ and take the minimum $m$. Conditions 1 and 2 give another decomposition
$$
e_ne_{n+1}\dots =(e_ne_{n+1})\dots (e_{m-2}e_{m-1})(0)(e_{1}e_2\dots e_{2\ell_1-2}
f_{1}g_{1})(e_1e_2\dots e_{2\ell_2-1}f_{2}g_{2})\dots.
$$
with
$$
e_{2\ell_i-1}e_{2\ell_i}\prec 
f_{i}g_{i}.
$$
By (\ref{IntervalRange}) we have
$$
\sum_{i=1}^{\infty}\frac {e_{m+i}}{(-\beta)^i}
>
\ell_{\beta}\left(
\left(1-\frac 1{\beta^{2\ell_1}}\right)+
\frac {1}{\beta^{2\ell_1}}
\left(1-\frac 1{\beta^{2\ell_2}}\right)+
\frac {1}{\beta^{2\ell_1+2\ell_2}}
\left(1-\frac 1{\beta^{2\ell_3}}\right)+
\dots \right) = \ell_{\beta}
$$
and obtain
$$
\sum_{i=1}^{\infty} \frac{e_{n+i-1}}{(-\beta)^i}<
\frac 1{\beta^{m-n}}
\sum_{i=1}^{\infty}
\frac {e_{m+i-1}}{(-\beta)^i}< \frac 1{\beta^{m-n}} \frac{\ell_{\beta}}{(-\beta)}
\le
r_{\beta}.
$$
For $n\ge 2$, we have shown
$$
\ell_{\beta}<
\sum_{i=1}^{\infty} \frac{e_{n+i-1}}{(-\beta)^i}<r_{\beta}.
$$
 This shows (\ref{ExpOne})
 since we have (\ref{Sol})
 for $n=1$.
\end{proof}

To apply Theorem \ref{Self}, if a sequence $(e_i)_{i=1,2,\dots}$ satisfies 
Conditions 1 and 2, we first find 
a solution $\beta>1$ of (\ref{Sol}) with $\lfloor \beta \rfloor=e_1$, and then we check Condition 3.
Unfortunately, Condition 3 requires infinitely many inequalities. 
If there exists an integer $\ell$
that 
\begin{equation}
\label{SinglePrefix}
e_{n}e_{n+1}\dots e_{n+\ell-1}
=e_1\dots e_{\ell}
\end{equation}
implies $n=1$, then
we only have to check
Condition 3 for $2\kappa\le \ell+1$. In this case, we obtain an algorithm to check that the given sequence $(e_i)$
is realized as the expansion of $\ell_{\beta}$.

%

Note that
$$
e_1e_2e_3\dots \prec x_{n}x_{n+1}x_{n+2}\dots  \text{ for } n=1,2,\dots
$$
implies
$$
e_1e_2e_3\dots \prec x_{n}x_{n+1}x_{n+2}\dots  \prec e_0e_1e_2\dots 
\text{ for } n=1,2,\dots.
$$
Indeed, for a fixed $n$, $x_n>0$ implies \begin{equation}
\label{Ineqx}
x_{n}x_{n+1}x_{n+2}\dots  \prec e_0e_1e_2\dots
\end{equation}
and if $x_n=0$ then (\ref{Ineqx}) is equivalent to 
$$
e_1e_2\dots \prec x_{n+1}x_{n+2}\dots.
$$
In particular, by taking 
$x_{n+i}=e_{n+i}$ for $i=0,1,\ldots$, 
we see that Condition~1 and 2 are equivalent to
$$
e_1e_2e_3\dots \prec e_{n}e_{n+1}e_{n+2}\dots\prec e_0e_1e_2\dots \text{ for } n\ge 2
$$
provided $e_1>0$.

Hereafter, we show that
Condition~3 is necessary
by several examples. They are listed in decreasing order by the number of solutions of (\ref{Sol})
with $\lfloor \beta\rfloor=e_1$. 
First, let $(e_i)=101100010001(0)^{\infty}$. Then (\ref{Sol}) 
has two positive algebraic solutions 
approximately 1.18173\dots and 1.75517\dots, both of which are the roots of
$$
x^{13}-x^{12}-x^{11}-x^{10}+x^8+x^5+x^4+x+1.
$$
If we choose the smaller $\beta\approx 1.18173$, then we can check that
$
(y_i)=101101
$
satisfies
$$
\sum_{i=1}^6 \frac {y_i}{(-\beta)^i} < \ell_{\beta} < \ell_{\beta}\left(1-\frac{1}{\beta^6}\right),
$$
i.e., Condition~3 is not satisfied. Indeed, Conditions 1 and 2 are fulfilled, but 
$d(\ell_{\beta},-\beta)=1001110\dots \neq (e_i)$. 

Second, let $(e_i)=201(2000)^\infty$. Conditions 1 and 2 are satisfied.
Solving (\ref{Sol}), we obtain a unique $\beta>2$ 
which is a 
root $\approx 2.76929$
of
$$
x^3-3x^2+x-1.
$$
However, $(y_i)=201201$ does not
satisfy the strict inequality (\ref{IntervalRange}) but the equality:
\begin{equation}
\label{OddEx}
\sum_{i=1}^6 \frac {y_i}{(-\beta)^i} = \ell_{\beta}\left(1-\frac{1}{\beta^6}\right).
\end{equation}
Indeed, $d(\ell_{\beta},-\beta)=(201)^{\infty}\neq (e_i)$, which is the
exceptional case of
(\ref{IntervalRange0}) 
in Proposition~\ref{Self0}. The same happens for any $(e_i)=2012000(201+2000)^{\infty}$
which satisfies Condition 1 and 2.
Cardinality of such expansions is clearly uncountable.
Here, $(x+y)^{\infty}$ is the infinite word generated by concatenation of $x$
or $y$ in arbitrary order.  


Finally, from \cite{LiaoSteiner12}, we see that
$$
w:=
\lim_{\beta\to 1+0} d(\ell_{\beta},-\beta)=100111001001001110011\cdots
$$
is a fixed point of the substitution
$\phi: 1\to 100,\ 0\to 1$. 
Although $w$ satisfies Condition 1, there is no solution $\beta>1$ of (\ref{Sol}). 
Any word in $\A^{\N}$ less than $w$
by the order $\prec$ which satisfies Condition 1 must be of the form $(\phi^j(1))^{\infty}$ 
with $j=0,1,\dots$.
See \cite[Lemma 4]{Dubickas} and
\cite[Theorem 3.1]{Akiyama-Kaneko:21}, for more information on this sequence.


Assume that $T_{-\beta}^{n-1}(\ell_{\beta}^+)\in I_d$. 
Since the edges from the $k$-th floor to the $m$-th floor must be the same as those from the $0$-the floor to the $(n-1)$-the floor, it follows that for $k = m - n + 1$
\begin{equation}\label{Return}
b_{m-n+2}\cdots b_{m}d=b_1\cdots b_{n}
\end{equation}
holds. Conversely, this means that from the $m$-th floor, one ascends to the $m+1$-th floor when $d=b_{m+1}$, and falls to the greatest floor $n$ satisfying \Cref{Return} when $d<b_{m+1}$ and $m$ is even, or when $d>b_{m+1}$ and $m$ is odd.
Thus, the Markov Diagram is given by the following rule:

\begin{Th}[How to construct the Markov diagram]\label{How to construct the Markov diagram}
When the expansion of the left endpoint is periodic, the diagram is finite and obtained by identifying the states. Therefore, we explain how to draw the case using $d^*(\ell_{\beta},-\beta)$ when the expansion is not periodic.

When
$d^*(\ell_{\beta},-\beta)=b_1b_2\dots $ is not periodic, in each state $i\in \mathbb{N}$, set the label $b_i$ to $i-1\rightarrow i$.
With $b_1\dots b_{m}d$ where $d<b_{m+1}$ and $i$ is even, or
where $d>b_{m+1}$ and $m$ is odd, and if this word $b_1 \dots b_md$
is {\bf not a forbidden word}, take the maximum $n$ such that
\Cref{Return}, add a transition from $m$ to $n$. The label of this transition is $d$.  
\end{Th}

Each coordinate $L_n=L_{0n}(1/\beta)$ of the left eigenvector $\ell^{(0)}$ corresponding to the thickness of each rectangle of the Natural Extension (see \Cref{Fig:NE_Example1+2}), while each coordinate $R_n=R_{n0}(1/\beta)$ of the right eigenvector corresponds to the horizontal length of these rectangles $r^{(0)}$. These quantities can be approximated by using a sufficiently large number of states of the Markov diagram.